\newcommand{\LL}{\mathbb{L}_X}
\newcommand{\R}{\mathbb{R}}
\newcommand{\beq}{\begin{equation}}
\newcommand{\eeq}{\end{equation}}
\newcommand{\bea}{\begin{eqnarray}}
\newcommand{\eea}{\end{eqnarray}}
\newcommand{\beas}{\begin{eqnarray*}}
\newcommand{\eeas}{\end{eqnarray*}}
\newtheorem{theorem}{Theorem}[section]
\newtheorem{lemma}[theorem]{Lemma}
\newtheorem{coroll}[theorem]{Corollary}
\newtheorem{prop}[theorem]{Proposition}
\newtheorem{definition}[theorem]{Definition}
\newtheorem{remark}[theorem]{Remark}
\newtheorem{ass}[theorem]{Assumption}
\def\RR{\mathbb R}
\def\FF{\mathbb F}
\def\EE{\mathsf E}
\def\PP{\mathsf P}
\def\eps{\varepsilon}
\def\cF{{\cal F}}
\def\cA{{\cal A}}
\def\cI{{\cal I}}
\def\cL{{\cal L}}
\def\cJ{{\cal J}}
\def\cO{{\cal O}}
\def\cT{{\cal T}}
\def\LL{{\mathbb{L} }}
\def\theequation{\arabic{section}.\arabic{equation}}
\begin{document}

\title{\textbf{Nash equilibria of threshold type \\ for two-player nonzero-sum games of stopping} \footnote{The first and the third authors were supported by EPSRC grants EP/K00557X/1 and EP/K00557X/2; financial support by the German Research Foundation (DFG) through the Collaborative Research Centre `Taming uncertainty and profiting from randomness and low regularity in analysis, stochastics and their applications' is gratefully acknowledged.}}

\author{Tiziano De Angelis\thanks{School of Mathematics, University of Leeds, Woodhouse Lane, Leeds LS2 9JT, United Kingdom; \texttt{t.deangelis@leeds.ac.uk}}\:\:\:\:Giorgio Ferrari\thanks{Center for Mathematical Economics, Bielefeld University, Universit\"atsstrasse 25, D-33615 Bielefeld, Germany; \texttt{giorgio.ferrari@uni-bielefeld.de}}\:\:\:\:John Moriarty\thanks{School of Mathematical Sciences, Queen Mary University of London, Mile End Road, London E1 4NS, United Kingdom; \texttt{j.moriarty@qmul.ac.uk}}}

\date{\today}
\maketitle

\textbf{Abstract.} This paper analyses two-player nonzero-sum games of optimal stopping on a class of linear regular diffusions with \emph{not non-singular} boundary behaviour (in the sense of It\^o and McKean (1974), p.\ 108). We provide sufficient conditions under which Nash equilibria are realised by each player stopping the diffusion at one of the two boundary points of an interval. The boundaries of this interval solve a system of algebraic equations. We also provide conditions sufficient for the uniqueness of the equilibrium in this class.
\medskip

{\textbf{Keywords}}: nonzero-sum Dynkin games; Nash equilibrium; smooth-fit principle; regular diffusions; free boundary problems.

\smallskip

{\textbf{MSC2010 subject classification}}: 91A05, 91A15, 60G40, 60J60, 35R35.


\section{Introduction}

Given a one-dimensional regular diffusion $X=(X_t)_{t \geq 0}$ on an interval $\mathcal{I}\subseteq \mathbb{R}$, we consider a two-player {\em Dynkin game} \cite{Dynkin01} in which player $i \in \{1,2\}$ chooses a stopping time $\tau_i$ in order to minimise the {\em payoff} $\mathcal{J}_i(\tau_1,\tau_2;x)$, where
\begin{align}
\label{functional0}
&\mathcal{J}_1(\tau_1,\tau_2;x):=\EE_x\Big[e^{-r\tau_1}G_1(X_{\tau_1})\mathds{1}_{\{\tau_1 < \tau_2\}} + e^{-r\tau_2}L_1(X_{\tau_2})\mathds{1}_{\{\tau_2 \leq \tau_1\}}\Big],\\[+3pt] 
\label{functional1}
&\mathcal{J}_2(\tau_1,\tau_2;x):=\EE_x\Big[e^{-r\tau_2}G_2(X_{\tau_2})\mathds{1}_{\{\tau_2 \le \tau_1\}} + e^{-r\tau_1}L_2(X_{\tau_1})\mathds{1}_{\{\tau_1 < \tau_2\}}\Big],
\end{align}
taking into account the stopping time chosen by player $j:=3-i$ (here $\EE_x$ denotes the expected value under the measure $\PP_x(\,\cdot\,)=\PP(\,\cdot\,|X_0=x)$). In particular we aim to provide sufficient conditions for the existence and uniqueness of Nash equilibria of the following {\em threshold type}:
\begin{align}\label{st-times}
\tau_1^*=&\tau_1(x_1^*),
\quad \tau_2^*=\tau_2(x_2^*) \quad \text{ for some } \quad 
x^*_1 < x^*_2,
\end{align}
where, for $x,z\in\cI$, we define $\PP_x$-a.s.~the stopping times
\begin{align}\label{st-times0}
\tau_1(z):=&\inf\{t> 0\,:\,X_t\le z \}\quad\text{and}\quad\tau_2(z):=\inf\{t> 0\,:\,X_t\ge z \}, 
\end{align} 
For $i=1,2$ we refer to $\mathcal{J}_i(\tau_1^*,\tau_2^*;x)$ as an \textit{equilibrium payoff}.
Our interest in such equilibria is guided by the seminal paper \cite{BensoussanFriedman}, where equilibrium stopping times are the hitting times of sets defined by free boundaries. 

In our game the stopping cost for player $i$ is equal to either $G_i(X_{\tau_i})$ or $L_i(X_{\tau_j})$ (continuously discounted at the rate $r>0$), depending on who stops first. While the existence and uniqueness of Nash equilibria are questions of fundamental interest, it is also valuable to investigate the {\em structure} of equilibria under general dynamics for $X$, a topic which has so far received relatively little rigorous mathematical treatment. 
In particular, having established the existence of an equilibrium structure such as \eqref{st-times} we may derive algebraic characterisations of the thresholds (that is, $x^*_1$ and $x^*_2$ in \eqref{st-times}) enabling straightforward numerical evaluation of both the equilibrium payoffs and stopping times. 

\subsection{Background and contribution}

In this section we briefly review the mathematical literature on Dynkin games (some of the economic literature will also be recalled in Section \ref{sec:conditions}) and place our contribution in this context. 

Zero-sum Dynkin games, in which $G_1=-L_2$ and $G_2=-L_1$, have a considerable  literature both in discrete and continuous time (see for instance \cite{Alarioetal}, \cite{Alvarez2008}, \cite{Bismut}, \cite{CvitanicKar}, \cite{DeAFe14}, \cite{EkstromVilleneuve}, \cite{Kifer}, \cite{Kypri}, \cite{LarakiSolan}, \cite{PeskirGame}, \cite{SirbuShreve}, \cite{TouziVieille} and references therein) and in several papers the structure of equilibria has been studied, beyond the question of their existence and uniqueness. In contrast
the literature on nonzero-sum Dynkin games focuses largely on the existence of Nash equilibria. 

Such existence results have been given in both the discrete and the continuous time settings, using a variety of sufficient conditions and methodologies. In discrete time the approaches applied include Tarski's fixed point theorem \cite{Mamer} and martingale methods combined with a fixed point theorem for monotone mappings \cite{Morimoto}; see also \cite{Ohtsubo2} and \cite{ShmayaSolan}. In continuous time the methodologies employed include quasi-variational inequalities \cite{Nagai}, the general theory of stochastic processes \cite{Etorneau} and the potential theory of Ray-Markov processes \cite{CattiauxLep}; see also \cite{HamZha10}, \cite{HamadeneHassani},  \cite{LarakiSolan} and \cite{LarakiSolan2}. 
Beyond the question of existence, equilibrium payoffs (although not strategies) are constructed by backward induction in \cite{Ohtsubo}. Additional technical questions arising in the continuous time setting, concerning appropriate notions of subgame-perfect equilibrium, are considered in \cite{RiSte15}.

In the present paper we take $X$ to be a weak solution of a stochastic differential equation (SDE) on an interval $\mathcal{I} =(\underline x,\overline x)$ and employ probabilistic methods so that, unlike in analytical settings such as that of \cite{BensoussanFriedman}, the coefficients of this SDE are only assumed to be continuous. We provide sufficient conditions on the functions $L_i$ and $G_i$ for the existence of a Nash equilibrium whose strategies have the structure \eqref{st-times}, together with sufficient conditions for uniqueness of the equilibrium in this class. Our method is the following: we show that if player 1 stops at the hitting time of a half-line $(\underline{x},z_1]$ then player 2's best reply is the hitting time to a half-line $[z_{2,1},\overline{x})$ where $z_{2,1}>z_1$ depends on $z_1$. The same arguments produce symmetric results if we start by letting player 2 stop at the hitting time of a half-line $[z_{2},\overline{x})$. The key point is then to show that there exists a fixed point, i.e.~a couple of intervals $(\underline{x},x_1^*]$ and $[x_2^*,\overline{x})$ such that the corresponding hitting times are the best replies to one another and \eqref{st-times} forms an equilibrium. To construct each player's best reply to the opponent's stopping rule we draw on the geometric characterisation of $r$-excessive mappings due to Dynkin \cite{Dynkin} and later generalised in \cite{DayKar}.

Our work complements recent related work by Attard \cite{Attard2} (see also \cite{Attard1}), 
which became available during the final drafting of the present paper. In \cite{Attard2} the structure of Nash equilibria is studied for regular linear diffusions absorbed at either $0$ or $1$. Here instead we consider regular linear diffusions on an interval $\cI\subseteq\RR$ killed at a (possibly state-dependent) rate and cover all boundary behaviours which are not non-singular. 
The methodology in \cite{Attard2} differs from our approach, which is to construct the solution by taking into account the geometry of the stopping cost functions. In contrast in \cite{Attard2} the equilibrium payoffs are hypothesised to satisfy the so-called \emph{double smooth-fit} principle, according to which they are continuously differentiable at the corresponding equilibrium stopping threshold $x^*_i$. Based on this principle two coupled free boundary problems are formulated and solved explicitly, producing functions $u$ and $v$ which can be verified as the equilibrium payoffs of the game. In the present paper we observe  double smooth fit in some equilibria (in particular see Section \ref{Nash-natural}), but we also find equilibria outside this setting (see Sections \ref{entrance}, \ref{exit} and \ref{relax}). Finally a more technical difference is that due to the absorbing behaviour at 0 and 1, the main result in \cite{Attard2} (Theorem 4.1) assumes that $G_i(0) = L_i(0)$ and $G_i(1) = L_i(1)$, $i=1,2$  (here we use our notation for the payoffs). 
We allow instead the limiting behaviour given in \eqref{lims} and in \eqref{ass:lim} below for $G_i$ and $L_i$, respectively.

\subsection{Outline of main results}\label{intro:mainres}
In order to present the main results we first note properties of the underlying regular diffusion $X$. In general the behaviour of the process $X$ at the boundaries of $\cI\subseteq\RR$ may be of several types \cite{ItoMcKean} and we will assume that the upper endpoint of $\mathcal{I}$ is \emph{natural}, while the lower one is \emph{not non-singular}: that is, either \emph{natural, exit-not-entrance} or \emph{entrance-not-exit} (see for instance \cite{BS}, Ch.~2, pp.~18--20). For the unfamiliar reader the terminology is explained in more detail in Section \ref{sec:diffusion} where other analytical properties of $X$ are also addressed.

Beginning with the case of natural boundaries (which includes Brownian motion, geometric Brownian motion and the Ornstein-Uhlenbeck process), we establish sufficient conditions on $G_i$ and $L_i$ (see the next section for details)
for the existence of a Nash equilibrium $(\tau^*_1,\tau^*_2)$ of the threshold type \eqref{st-times}. Under these conditions the {\em smooth fit} principle holds for the equilibrium payoff $x \mapsto \mathcal{J}_i(\tau^*_1,\tau^*_2;x)$ at the corresponding equilibrium threshold $x^*_i$ (i.e.~the payoff is continuously differentiable at $x^*_i$). These thresholds may therefore be characterised by a system of algebraic equations. We then show that if the functions $L_i$ are assumed to have appropriate smoothness, we may also provide sufficient conditions for the uniqueness of the Nash equilibrium amongst strategies of the threshold type \eqref{st-times}. 

Analogous results are obtained in the case when the lower endpoint is either an exit-not-entrance or an entrance-not-exit boundary, thus addressing Bessel processes (depending on their parameters) and related processes such as the CIR (Cox-Ingersoll-Ross) and CEV (constant elasticity of variance) process. In these settings we also find equilibria in which one of the two players never stops, and equilibria with a structure possibly more complex than \eqref{st-times} depending on the initial value of $X$ (see Proposition \ref{prop:S}).

We also indicate in Appendix \ref{sec:rdisc} the extension to a state dependent discount factor. Other combinations of natural, exit-not-entrance and entrance-not-exit boundaries may be addressed via the methods of this paper (indeed this is immediate by symmetry when the lower boundary is natural and the upper one not non-singular). 

\subsubsection{Conditions on the problem data}\label{sec:conditions}

We consider stopping costs $L_i$ and $G_i$ fulfilling suitable assumptions of integrability and smoothness (cf.\ Definitions \ref{def:sets0}, \ref{def:sets}). Moreover, they satisfy the sufficient conditions applied below, which are motivated by the threshold type equilibrium structure \eqref{st-times}. For $i=1,2$ these are:
\begin{enumerate}
\item [(a)] $L_i < G_i$, 
\item[(b)] $\overline{\Gamma}_1 \cap \overline{\Gamma}_2 = \emptyset$, where $\overline{\Gamma}_i$ denotes the closure of $\Gamma_i:=\{x: (\mathbb{L}_X - r)G_i(x) > 0\}$ and $\LL_X$ denotes the infinitesimal generator of $X$,
\item[(c)] The equation $(\mathbb{L}_X - r)G_i(x) = 0$ has a single root.
\end{enumerate}

In our setup player $i$ chooses a stopping time $\tau_i$, or equivalently a {\em pure strategy} (see, for example, \cite{RiSte15} for discussion on pure and mixed strategies in continuous time). Our requirement (a) specifies that each player locally has an incentive to act second: in the context of stopping games, this is a \emph{war of attrition} (see, e.g., \cite{Murto}). It is worth mentioning here that in the opposite situation, in which $L_i > G_i$, each player locally has an incentive to act first and the game belongs to the class of \emph{preemption games}, see for example \cite{FT85} for a deterministic setting and \cite{Anderson,Steg2016} for a stochastic framework. In the literature on preemption games, equilibria are usually realised in mixed rather than pure strategies. Requirement (a) is therefore reasonable in a study of threshold-type strategies, which are pure strategies constructed from hitting times.

Condition (b) addresses the cost functions $G_i$, which are the costs for stopping first. An argument using Dynkin's formula, which is standard in optimal stopping and is also provided in Appendix \ref{2usefulOS}, establishes that player $i$ will not stop on the set $\{x: (\mathbb{L}_X - r)G_i(x) < 0\}$. Our requirement (b) therefore means that locally at least one player is incentivised to wait rather than accepting the stopping cost. This implies that the players do not stop simultaneously. 

Motivated by the latter argument, let us temporarily fix the strategy of player $i$. Then it is reasonable to suppose that if the function $x \mapsto g_j(x):=(\mathbb{L}_X - r)G_j(x)$, $j=3-i$, changes sign many times, the optimal strategy for player $j$ (given player $i$'s strategy) may involve several disjoint regions on which it is optimal to stop. Since condition (c) ensures that the function $g_j$ changes sign at most once on $\cI$, this suggests that the optimal strategy for player $j$ (given player $i$'s strategy) should be to stop the process upon hitting a connected set.
Indeed this type of condition is commonly used in the literature on optimal stopping problems in order to ensure that the solution is a stopping time of threshold type.
\vspace{+6pt}

In principle our techniques may also apply under conditions other than (a)--(c), in which case equilibria with other structures can arise. This point is illustrated in Section \ref{relax}, where condition (b) is replaced by the following alternative:
\begin{enumerate}
\item[(d)] Either $\Gamma_1\subset \Gamma_2$ or $\Gamma_2\subset \Gamma_1$.
\end{enumerate} 
In this case it is necessary to slightly generalise the structure of \eqref{st-times} as one of the players may never stop. Such equilibria have been obtained in the economics literature for instance by Murto \cite{Murto} but in Section \ref{relax} we consider more general specifications of the stochastic process $X$ and of the stopping costs $L_i$ and $G_i$ than those used in \cite{Murto}.

Note that the threshold type structure we study has some degree of overlap with that of some zero-sum games of optimal stopping (see, e.g., \cite{Alvarez2008}). However a characterisation of the equilibrium stopping thresholds in our game cannot be achieved via methods usually employed for zero-sum games. Indeed in this paper we deal with the joint optimisation of the coupled system of payoffs \eqref{functional0}-\eqref{functional1}. The latter reduces to a simpler problem with a single payoff in the zero-sum case. From a PDE point of view this can be understood by noticing that a zero-sum game is usually associated to a variational inequality (see, e.g.~\cite{Alvarez2008}) whereas a nonzero-sum game must be associated to a system of coupled variational inequalities (see, e.g.~\cite{BensoussanFriedman}).


\vspace{2mm}

The rest of the paper is organised as follows. Section \ref{sec:setting} introduces the nonzero-sum Dynkin game, together with the dynamics and our assumptions and sufficient conditions on the stopping costs. Existence and uniqueness results for Nash equilibria of threshold type are proved in Section \ref{sec:construction} for different combinations of boundary behaviour. In Section \ref{relax} we consider slightly weaker assumptions on the stopping costs. In the Appendix we generalise our results to state dependent discount factors and provide some auxiliary proofs.


\section{Setting}
\label{sec:setting}

We begin by formally stating the game presented in the Introduction and by providing a rigorous definition of a Nash equilibrium in Section \ref{sec:game}. Then we describe the class of diffusions involved in the optimisation (see Section \ref{sec:diffusion}), whose analytical properties are finally used in Sections \ref{sec:H} and \ref{sec:payoffs} to characterise the class of stopping costs to be used in the game, i.e.~$G_i$, $L_i$, $i=1,2$ in \eqref{functional0} and \eqref{functional1}.

\subsection{The nonzero-sum Dynkin game}
\label{sec:game}
On a complete filtered probability space $(\Omega, \mathcal{F},\PP)$ equipped with a filtration $\FF=(\mathcal{F}_t)_{ t \geq 0}$ which satisfies the standard assumptions, we consider a real-valued diffusion process $X:=(X_t)_{t\ge 0}$. Its state space is an interval $\cI\subseteq \RR$ and we denote the \emph{explosion time} by 
\begin{align}\label{sigmaI}
\sigma_{\,\cI}:=\inf\{t\ge0\,:\,X_t\notin\cI\}.
\end{align}
Further details concerning the diffusion $X$ will be provided in the next section.

In order to formally introduce the \emph{two-player nonzero-sum game of stopping} we denote
\begin{align*}
\mathcal{T}:=\{\tau:\,\text{$\tau$ is an $\FF$-stopping time and $\tau\le\sigma_\cI$ a.s.}\},
\end{align*}
and when no confusion may arise we also denote player 1 by $P_1$ and player 2 by $P_2$. 
For a given $\tau_2\in\cT$, player $P_1$ aims at \emph{minimising} the payoff $\mathcal{J}_1(\tau_1,\tau_2;x)$ in \eqref{functional0} by optimally choosing their stopping time $\tau_1\in\cT$. Analogously, given $\tau_1\in\cT$, $P_2$ chooses the stopping time $\tau_2\in\cT$ in order to minimise $\mathcal{J}_2(\tau_1,\tau_2;x)$ in \eqref{functional1}. 

In order to cover the events $\omega$ for which $\sigma_\cI(\omega)=+\infty$, for any real-valued Borel function $f$ and any $\tau\in\cT$, we set  
\begin{align}\label{limit}
e^{-r\tau}f(X_\tau)\mathds{1}_{\{\tau=+\infty\}}=0\quad\text{$\PP_x$-a.s.~for all $x \in \cI$}.
\end{align}
Before proceeding further we provide the definition of Nash equilibrium.
\begin{definition}
\label{def:Nash}
For $x \in \cI$ we say that a couple $(\tau_1,\tau_2) \in \mathcal{T} \times \mathcal{T}$ is a Nash equilibrium for the two-player nonzero-sum game of optimal stopping, started at $x$, if and only if
\begin{equation}
\label{Nashequilibrium}
\left\{
\begin{array}{ll}
\mathcal{J}_1(\tau_1,\tau_2;x) \leq \mathcal{J}_1(\rho,\tau_2;x), \quad \forall\,\rho \in \mathcal{T}, \\[+5pt]
\mathcal{J}_2(\tau_1,\tau_2;x) \leq \mathcal{J}_2(\tau_1,\rho;x), \quad \forall\,\rho \in \mathcal{T}.
\end{array}
\right.
\end{equation}
We also say that $v_i(x):=\mathcal{J}_i(\tau_1,\tau_2;x)$ is the corresponding equilibrium payoff for the $i$-th player. Further, if the couple $(\tau_1,\tau_2)$ is an equilibrium in the game started at $x$ for each $x\in\cI$, we simply say that $(\tau_1,\tau_2)$ is a Nash equilibrium. 
\end{definition}

\subsection{The underlying diffusion}
\label{sec:diffusion}

Let $B=(B_t)_{t\geq 0}$ be a one dimensional standard Brownian motion on $(\Omega, \cF,\PP)$ adapted to $\FF$, then our diffusion $X$ is defined as follows. The triple $(\Omega, \cF,\PP),\FF,(X,B)$ is a weak solution of the stochastic differential equation (SDE)
\begin{equation}
\label{state:X}
dX_t=\mu(X_t)dt+\sigma(X_t)dB_t,\qquad X_0=x\in\cI,
\end{equation}
for some Borel-measurable functions $\mu,\sigma: \mathbb{R} \to \mathbb{R}$ to be specified.
To account for the dependence of $X$ on its initial position, from now on we shall write $X^x$ where appropriate and $\PP_x$ to refer to the probability measure such that $\PP_x(\cdot) = \PP(\cdot| X_0=x)$, $x \in \mathcal{I}$. Throughout the paper we will equivalently use the notations $\EE[f(X^x_t)]$ and $\EE_x[f(X_t)]$, $f:\RR \to \RR$ Borel-measurable and integrable, to refer to expectations under the measure $\PP_x$.

We denote by $\overline{\cI}$ the closure of $\mathcal{I}$ and assume that $\mathcal{I}=(\underline{x},\overline{x}) \subseteq \R$ so that $\underline{x}$ and $\overline{x}$ are (not necessarily finite) boundary points for $X$. The upper boundary point $\overline{x}$ is assumed to be natural, whereas the lower one $\underline{x}$ is either natural, exit-not-entrance or entrance-not-exit (see for instance Ch.~2, pp.~18--20, of \cite{BS} for a characterisation of the boundary behaviour of diffusions). We recall that $\overline{x}$ (or equivalently $\underline{x}$) is natural if the process cannot start from there and when starting from $x\in\cI$ it cannot reach $\overline{x}$ (resp.~$\underline{x}$) in finite time; $\underline{x}$ is exit-not-entrance if the process cannot start from $\underline{x}$ but can reach it in finite time (hence $\sigma_\cI<+\infty$ with positive probability); finally $\underline{x}$ is entrance-not-exit if the process can start from $\underline{x}$ but it cannot reach it in finite time when started from $x\in\cI$.

For the coefficients of the SDE \eqref{state:X} we make the following assumption, which will hold throughout the paper.
\begin{ass}\label{ass:D2}
The functions $\mu$ and $\sigma$ are continuous in $\cI$ with $\sigma^2>0$ in $\mathcal{I}$.
\end{ass}
As a consequence of the above assumption one has that for every $y \in \mathcal{I}$ there exists $\varepsilon_o>0$ such that
\begin{equation*}
\label{LI}
\int_{y-\varepsilon_o}^{y+\varepsilon_o}\frac{1 + |\mu(\xi)|}{|\sigma(\xi)|^2}\,d\xi < +\infty.
\end{equation*}
The latter guarantees that \eqref{state:X} has indeed a weak solution that is unique in the sense of probability law (up to the time $\sigma_\cI$, cf.\ \cite{KS}, Ch.\,5.5).

We now recall some basic analytical properties of diffusions, which are also going to be used later on to characterise the functions $G_i$, $L_i$ appearing as stopping costs in the game (recall \eqref{functional0} and \eqref{functional1}). We refer the reader to Ch.\ 2 of \cite{BS} for a detailed exposition.
Under Assumption \ref{ass:D2}, the diffusion process $X$ is regular in $\mathcal{I}$; that is, if 
\begin{align}\label{tau-y}
\tau(y):=\inf\{t\geq0: X_t=y\}
\end{align}
one has $\PP_x(\tau(y)<\infty) > 0$ for every $x$ and $y$ in $\mathcal{I}$ so that the state space cannot be decomposed into smaller sets from which $X$ cannot exit. The continuity of $\mu$ and $\sigma$ imply that the scale function has derivative 
$$S'(x):=\exp{\bigg(-\int_{x_o}^x \frac{2\mu(\xi)}{\sigma^2(\xi)}d\xi\bigg)}, \quad x\in \cI,$$
for any fixed reference point $x_o \in \cI$, and the speed measure has density
$$m'(x):= \frac{2}{\sigma^2(x)S'(x)}, \quad x \in \cI.$$
We define the infinitesimal generator $\LL_X$ of $X$ by
$$\big(\LL_X u\big)(x):=\frac{1}{2}\sigma^2(x)u''(x) + \mu(x)u'(x), \quad \,\,x\in\mathcal{I},$$
for any $u \in C^2(\mathcal{I})$. Then, for fixed $r>0$, under Assumption \ref{ass:D2} there always exist two linearly independent, strictly positive solutions of the ordinary differential equation $\LL_X u = r u$ satisfying a set of boundary conditions based on the boundary behaviour of $X$ (see, e.g., pp.\ 18--19 of \cite{BS}). These functions span the set of solutions of $\LL_X u = r u$ and are uniquely defined up to multiplication if one of them is required to be strictly increasing and the other one to be strictly decreasing. We denote the strictly increasing solution $\psi_r$ and the strictly decreasing one $\phi_r$.
For $x,y\in\cI$ and $\tau(y)$ as in \eqref{tau-y} one has
\begin{align}\label{eqn:lapl}
\EE_x\big[e^{-r\tau(y)}\big]=\left\{
\begin{array}{ll}
\frac{\psi_r(x)}{\psi_r(y)}, & x<y,\\[+5pt]
\frac{\phi_r(x)}{\phi_r(y)}, & x>y.
\end{array}
\right.
\end{align}

We recall that the Wronskian
\begin{equation}
\label{Wronskian}
W:= \frac{\psi_r'(x)\phi_r(x) - \phi'_r(x)\psi_r(x)}{S'(x)}, \quad x \in \cI,
\end{equation}
is a positive constant and we introduce the Green function
$$r(x,y):= W^{-1} \cdot \left\{
\begin{array}{ll}
\psi_r(x)\phi_r(y), & x \leq y,\\[+4pt]
\phi_r(x)\psi_r(y), & x \geq y.
\end{array}
\right.
$$
The latter can be used to obtain the representation formula for the resolvent
\begin{equation}
\label{resolvent}
\EE_x\bigg[\int_0^{\sigma_\cI} e^{-rt} f(X_t) dt\bigg] = \int_{\cI} f(y)r(x,y)m'(y) dy, \quad x \in \cI,
\end{equation}
which holds for any continuous function $f:\cI\to\RR$ such that the integrals are well defined. Moreover the following useful equations hold for any $\underline{x}<a<b<\overline{x}$ (cf.~par.~10, Ch.~2 of \cite{BS}):
\begin{equation}
\label{psiphiproperties3}
\frac{\psi'_r(b)}{S'(b)} - \frac{\psi'_r(a)}{S'(a)}= r\int_{a}^{b}\psi_r(y)m'(y) dy, \qquad \frac{\phi'_r(b)}{S'(b)}-\frac{\phi'_r(a)}{S'(a)} = r\int_{a}^{b}\phi_r(y)m'(y) dy.
\end{equation}

\subsection{Classes of stopping cost functions}\label{sec:H}
In order to clarify the assumptions concerning the stopping costs $G_i$, $L_i$, $i=1,2$ appearing in \eqref{functional0} and \eqref{functional1} we need first to introduce the class of functions below.
\begin{definition}\label{def:sets0}
Let $\cA$ be the class of real valued functions $H \in C^2(\cI)$ such that
\begin{align}
\label{lims}&\lim_{x\to\underline{x}}\frac{H}{\phi_r}(x)=0,\:\:\lim_{x\to\overline{x}}\frac{H}{\psi_r}(x)=0\\[+4pt]
\label{lims2}&\quad\text{and}\quad\EE_x\bigg[\int_0^{\sigma_\cI} e^{-rt} \big|h(X_t)\big|dt\bigg] < \infty
\end{align}
for all $x\in\cI$ and with $h(x):= (\LL_X H - rH)(x)$. 
\end{definition}
\noindent In this paper, elements of $\cA$ will be often denoted by $H$ and then the corresponding lower case letter $h$ will denote the function $h(x):=(\LL_XH-rH)(x)$.

We provide some formulae for functions in $\cA$ which will be useful in the rest of the paper. Using It\^o's formula, \eqref{lims} and standard localisation arguments one can show that for $H\in\cA$ we have
\begin{equation}
\label{Dynkin}
\displaystyle H(x)=-\EE_x\bigg[\int_0^{\sigma_{\cI}} e^{-rt} h(X_t) dt\bigg],\quad x\in\cI.
\end{equation}
Then applying the representation \eqref{resolvent} we get the equivalent expression
\begin{equation}
\label{resolvent2}
\displaystyle H(x)= -W^{-1}\bigg[\phi_r(x)\int_{\underline{x}}^x \psi_r(y)h(y) m'(y)dy + \psi_r(x)\int_x^{\overline{x}} \phi_r(y)h(y) m'(y)dy\bigg]
\end{equation}
and straightforward calculations also give
\begin{align}\label{res3}
\Big(\frac{H}{\phi_r}\Big)'(x)=-\frac{1}{W}\left(\frac{\psi_r}{\phi_r}\right)'(x)\int_x^{\overline{x}}\phi_r(y)h(y)m'(y)dy.
\end{align}

For our study we also consider the following subsets of $\cA$. 
\begin{definition}\label{def:sets}
We say that $H\in\cA$ lies in the class $\cA_1$ if $h(\,\cdot\,)$ has a unique zero at $x_h\in\cI$ and $\liminf_{x \to \underline x}h(x)>0$ and $\limsup_{x \to \overline x}h(x)<0$.
Alternatively we say that $H\in\cA_2$ if $-H\in\cA_1$.
\end{definition}

Several proofs below use a geometric approach to optimal stopping which requires the following change of variables. As in \cite{DayKar}, eq.\ (4.6), we define the strictly increasing function
\begin{equation}
\label{def-F}
F_{r}(x):=\frac{\psi_{r}(x)}{\phi_{r}(x)}, \qquad x \in \cI,
\end{equation}
together with its inverse function $F_{r}^{-1}$ and for any continuous real function $H$ on $\mathcal{I}$ we set
\begin{align}\label{def:hat}
\hat{H}(y):=\left\{
\begin{array}{ll}
\Big(\frac{H}{\phi_r}\Big)\circ F_r^{-1}(y), & y > 0,\\
0, & y=0.
\end{array}
\right.
\end{align}
In what follows (see, e.g., Lemma \ref{lemm:concavity}) for $H\in\cA_i$, $i=1,2$ we denote $$\hat{y}_h:=F_r(x_h)$$

For the benefit of the unfamiliar reader we provide in Appendix \ref{app:convex} a proof of the next well known result (see also Section $6$, p.\ 192 in \cite{DayKar}). 
\begin{lemma}\label{lem:conv}
Let $x_1,x_2\in\cI$ and set $y_i:= F_r(x_i)$, $i=1,2$. Moreover let $H\in C^2(\cI)$ and define $\hat{H}$ as in \eqref{def:hat} and $h:=(\LL_X-r)H$. Then  
\begin{align}
\text{$\hat{H}(y)$ is strictly convex on $(y_1,y_2)$}\iff\text{$h(x)>0$ on $(x_1,x_2)$}.
\end{align} 
\end{lemma}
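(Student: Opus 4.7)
The proof amounts to a direct computation showing that $\hat{H}''(y)$ and $h(F_r^{-1}(y))$ have the same sign at every point, after which the equivalence follows because $F_r$ is a strictly increasing $C^2$ diffeomorphism from $(x_1,x_2)$ onto $(y_1,y_2)$.

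The key step is to express $\hat{H}''$ explicitly. Writing $u:=H/\phi_r$ and $x=F_r^{-1}(y)$, the chain rule gives $\hat{H}'(y)=u'(x)/F_r'(x)$. The Wronskian identity \eqref{Wronskian} yields $F_r'(x)=W\,S'(x)/\phi_r^2(x)$, so after a short algebraic manipulation
$$\hat{H}'(y)=\frac{H'(x)\phi_r(x)-H(x)\phi_r'(x)}{W\,S'(x)}.$$
Differentiating this expression in $x$, the cross-terms coming from $(H\phi_r)'$ cancel, leaving $(H''\phi_r-H\phi_r'')/S'$ in the numerator of the first piece. Using $S''(x)/S'(x)=-2\mu(x)/\sigma^2(x)$ (immediate from the definition of $S'$) together with the speed density $m'(x)=2/(\sigma^2(x)S'(x))$, the terms reassemble into a Lagrange-type combination:
$$\frac{d}{dx}\!\left(\frac{H'\phi_r-H\phi_r'}{S'}\right)=m'(x)\bigl[\phi_r(x)\LL_X H(x)-H(x)\LL_X\phi_r(x)\bigr].$$
Since $\LL_X\phi_r=r\phi_r$, the bracket collapses to $\phi_r(x)(\LL_X H-rH)(x)=\phi_r(x)h(x)$. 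Dividing by the extra factor $F_r'(x)$ produced by the outer chain rule application yields the identity
$$\hat{H}''(y)=\frac{m'(x)\,\phi_r(x)}{W\,F_r'(x)}\,h(x),\qquad y=F_r(x).$$

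Because $m'$, $\phi_r$, $W$ and $F_r'$ are all strictly positive on $\cI$, $\hat{H}''(y)$ and $h(x)$ share their sign. The implication $h>0$ on $(x_1,x_2)\Rightarrow\hat{H}''>0$ on $(y_1,y_2)$, hence strict convexity, is then immediate; for the converse I would use the continuity of $h$ (which follows from $H\in C^2(\cI)$ and Assumption \ref{ass:D2}), so that a non-positive value of $h$ at any interior point would force $\hat{H}''\le 0$ on an open neighbourhood and contradict strict convexity. The only genuine obstacle is the algebraic bookkeeping in the differentiation step, but once one recognises that the surviving cross-terms combine into $\phi_r\LL_X H-H\LL_X\phi_r$ the identification with $m'\phi_r h$ is automatic.
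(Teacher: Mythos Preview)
Your proof is correct and follows essentially the same route as the paper's: compute $\hat{H}''(y)$ via the chain rule, use the Wronskian identity $F_r'(x)=W S'(x)/\phi_r^2(x)$, and simplify using $S''/S'=-2\mu/\sigma^2$ and $\LL_X\phi_r=r\phi_r$ to obtain $\hat{H}''(y)=\frac{m'(x)\phi_r(x)}{W F_r'(x)}h(x)$, which is exactly the paper's formula written with $m'(x)=2/(\sigma^2(x)S'(x))$. One small imprecision in your converse argument: a single zero of $h$ (i.e.\ $h(x_0)=0$ with $h>0$ nearby) does not force $\hat{H}''\le 0$ on a full neighbourhood, so the converse as stated needs a slightly sharper phrasing (the paper itself does not spell this out either, simply asserting that the pointwise sign identity ``proves the claim'').
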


\subsection{Sufficient conditions on the stopping costs and notation}\label{sec:payoffs}

Here we formulate the statements (a), (b) and (c) of Section \ref{sec:conditions} in the above setting. We will show in Section \ref{sec:construction} that these conditions are sufficient for the existence of Nash equilibria of threshold type \eqref{st-times}.
It is convenient to recall the notation $\overline{\Gamma}_i$ for the closure of the sets $\Gamma_i:=\{x\in\cI\,:\,(\LL_X-r)G_i(x)>0\}$, $i=1,2$. 
\begin{ass}
\label{ass:flessi}
\begin{itemize}
\item[(i)] For $i=1,2$ we have $L_i,\,G_i\in C(\cI;\RR)$ with $L_i<G_i$ on $\cI$;
\item[(ii)] $G_1\in\cA_1$ and $G_2\in\cA_2$ with $\overline{\Gamma}_1\cap\overline{\Gamma}_2=\emptyset$;
\item[(iii)] For $i=1,2$ we have
\begin{align}\label{ass:lim}
\limsup_{x\to\underline{x}}\Big|\frac{L_i}{\phi_r}\Big|(x)<+\infty\:\:\:\:\text{and}\:\:\:\:
\limsup_{x\to\overline{x}}\Big|\frac{L_i}{\psi_r}\Big|(x)<+\infty.
\end{align}
\end{itemize}
\end{ass}
In fact parts $(i)$ and $(ii)$ slightly refine conditions (a), (b) and (c) of Section \ref{sec:conditions}, since we now require the sign of $(\LL_X-r)G_i$ to be asymptotically non-zero at the endpoints of $\cI$. We have introduced condition $(iii)$ to ensure the finiteness of the game's payoffs (see e.g.~\cite{DayKar}).

It is useful to introduce also some notation related to the above assumptions on the stopping costs.
We recall \eqref{def:hat} and for $i=1,2$ we set $\hat{G}_i$ and $\hat{L}_i$ to be the transformations of $G_i$, $L_i$. 
\begin{definition}[\textbf{Notation}]
\label{def:inflection}
For $i=1,2$ and $G_i\in\cA_i$ we define
\begin{enumerate}
\item[1.] $g_i(x):=(\LL_X-r)G_i(x)$, $x\in\cI$;
\item[2.] $\hat{x}_i$ the unique point at which the sign of $g_i(x)$ changes and $\hat{y}_i:=F_r(\hat{x}_i)$;
\item[3.] $\overline{y}_i$ the unique stationary point of $\hat{G}_i$ in $(0,\hat{y}_i)$, whenever it exists.
\end{enumerate}
For $i=1,2$ and $L_i\in\cA_i$ we define
\begin{enumerate}
\item[4.] $\ell_i(x):=(\LL_X-r)L_i(x)$, $x\in\cI$;
\item[5.] $\check{x}_i$ the unique point at which the sign of $\ell_i(x)$ changes and $\check{y}_i:=F_r(\check{x}_i)$;
\item[6.] $\widetilde{y}_i$ the unique stationary point of $\hat{L}_i$ in $(0,\check{y}_i)$, whenever it exists.
\end{enumerate}
\end{definition}

\noindent Notice that $\hat{G}_i$ and $\hat{L}_i$ as in Definition \ref{def:inflection} have at most one stationary point in $(0,\hat{y}_i)$ and $(0,\check{y}_i)$, respectively, due to Lemma \ref{lem:conv}. Note also that in this setting 
$$\overline{\Gamma}_1\cap \overline{\Gamma}_2=\emptyset\iff\hat{x}_1 < \hat{x}_2.$$

\begin{remark}
\label{rem:A1A2}
For natural and entrance-not-exit boundaries we have $\phi_r(x)\uparrow +\infty$ as $x\downarrow \underline{x}$ and $\psi_r(x)\uparrow +\infty$ as $x\uparrow \overline{x}$ so that bounded functions $G_i$ satisfy \eqref{lims}, for example. In the case of an exit-not-entrance boundary which is explored in Section \ref{exit}, however, \eqref{lims} is more restrictive and so it is relaxed in the latter section, yielding an additional term in \eqref{Dynkin} (cf. \eqref{res2a}). We also note that all the results in this paper remain true if in the definition of $\cA$ the regularity of $H$ is weakened by requiring $H\in W^{2,\infty}_{loc}(\cI)$.
\end{remark}
 

\section{Construction of Nash equilibria}
\label{sec:construction}

In this section we develop our existence and uniqueness results under different combinations of diffusion boundary behaviour. We are then able to provide an algebraic characterisation of the optimal thresholds, as a system of two equations in two unknowns (or reducing in special cases to one equation in one unknown, with another threshold formally located at one of the endpoints $\underline x$ and $\overline x$). We begin in Section \ref{Nash-natural} under the assumption that the endpoints $\underline{x}<\overline{x}$ of $\cI$ are natural for $X$, then consider an  entrance-not-exit lower boundary $\underline{x}$ in Section \ref{entrance} and an exit-not-entrance lower boundary in Section \ref{exit}.

\subsection{The case of natural boundaries}
\label{Nash-natural}

When $\underline{x}$ and $\overline{x}$ are both natural boundary points we have (see par.~10, Sec.~2 of \cite{BS}):
\begin{equation}
\label{psiphiproperties1}
\lim_{x \downarrow \underline{x}}\psi_r(x) = 0,\,\,\,\,\lim_{x \downarrow \underline{x}}\phi_r(x) = \infty,\,\,\,\,\lim_{x \uparrow \overline{x}}\psi_r(x) = \infty,\,\,\,\,\lim_{x \uparrow \overline{x}}\phi_r(x) = 0,
\end{equation}
\begin{equation}
\label{psiphiproperties2}
\lim_{x \downarrow \underline{x}}\frac{\psi'_r(x)}{S'(x)} = 0,\,\,\,\,\lim_{x \downarrow \underline{x}}\frac{\phi'_r(x)}{S'(x)} = -\infty,\,\,\,\,\lim_{x \uparrow \overline{x}}\frac{\psi'_r(x)}{S'(x)} = \infty,\,\,\,\,\lim_{x \uparrow \overline{x}}\frac{\phi'_r(x)}{S'(x)} = 0.
\end{equation}

The following lemma provides geometric properties associated with the classes $\cA_1$, $\cA_2$ of Definition \ref{def:sets}, and is proved in the Appendix.
\begin{lemma}
\label{lemm:concavity}
Let $H\in \cA_1$ (respectively $\cA_2$). Then $\hat{H}$:
\begin{itemize}
	\item[  i)] is strictly convex (resp. concave) on $(0, \hat{y}_h)$ and strictly concave (resp. convex) on $(\hat{y}_h, \infty)$,
	\item[  ii)] satisfies $\hat{H}(0+)=0$ and $\hat{H}'(0+)=-\infty$ (resp. $+\infty$);
	\item[  iii)] has a unique global minimum (resp. maximum) in $(0, \hat{y}_h)$ and $\lim_{y\to\infty}\hat{H}(y)=+\infty$ (resp. $-\infty$); finally $\hat{H}$ is monotonic increasing (resp. decreasing) on $(\hat{y}_h,+\infty)$.
\end{itemize}
\end{lemma}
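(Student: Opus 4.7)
The approach is to reduce to the case $H\in\cA_1$ and then apply the geometric Lemma \ref{lem:conv} together with the analytic formulas \eqref{res3} and \eqref{resolvent2}. The $\cA_2$ case follows from the $\cA_1$ one by linearity of the transform $H\mapsto\hat H$: since $\widehat{-H}=-\hat H$, convexity/concavity, minimum/maximum, and the sign of the limit at infinity all reverse. Throughout the argument I use that at the natural boundaries \eqref{psiphiproperties1} gives $F_r(\underline x+)=0$ and $F_r(\overline x-)=+\infty$, so $F_r:\cI\to(0,+\infty)$ is a bijection. Part (i) is then immediate from Lemma \ref{lem:conv}, since Definition \ref{def:sets} guarantees $h>0$ on $(\underline x,x_h)$ and $h<0$ on $(x_h,\overline x)$. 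The identity $\hat H(0+)=0$ in part (ii) is just the first limit in \eqref{lims}.

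For the derivative $\hat H'(0+)$, I differentiate $\hat H\circ F_r=H/\phi_r$ and use \eqref{res3} to get
\begin{equation*}
\hat H'(F_r(x))=-W^{-1}\int_x^{\overline x}\phi_r(\xi)h(\xi)m'(\xi)\,d\xi.
\end{equation*}
The integrability condition \eqref{lims2} (via \eqref{resolvent}) ensures $\int_{x_h}^{\overline x}\phi_r|h|m'<+\infty$, so only the piece on $(x,x_h)$ can diverge as $x\to\underline x$. Since $\liminf_{x\to\underline x}h>0$, there exist $c,\delta>0$ with $h\geq c$ on $(\underline x,\underline x+\delta)$; the second identity in \eqref{psiphiproperties3} combined with \eqref{psiphiproperties2} at the natural boundary $\underline x$ yields $\int_a^{\underline x+\delta}\phi_r m'\to+\infty$ as $a\to\underline x$, and hence the full integral diverges to $+\infty$, giving $\hat H'(0+)=-\infty$. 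Evaluating the same formula at $y=\hat y_h$ gives $\hat H'(\hat y_h)=-W^{-1}\int_{x_h}^{\overline x}\phi_r h\,m'>0$ since $h<0$ on $(x_h,\overline x)$; and for any $y>\hat y_h$ the integrand is strictly negative on $(F_r^{-1}(y),\overline x)$, so $\hat H'>0$ on $(\hat y_h,+\infty)$, which is the monotonicity claim in part (iii). Combining strict convexity on $(0,\hat y_h)$ with $\hat H'(0+)=-\infty$ and $\hat H'(\hat y_h)>0$ yields a unique $\overline y_h\in(0,\hat y_h)$ with $\hat H'(\overline y_h)=0$; strict monotonicity of $\hat H$ on $(0,\overline y_h)$ and on $(\overline y_h,\hat y_h)$ implies $\hat H(\overline y_h)<\hat H(0+)=0$ and $\hat H(\hat y_h)>\hat H(\overline y_h)$, while $\hat H$ is increasing beyond $\hat y_h$, so $\overline y_h$ is the unique global minimum.

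The hardest step will be the asymptotic $\lim_{y\to\infty}\hat H(y)=+\infty$: indeed $\hat H'(y)\to 0$ as $y\to\infty$ (the tail integral $\int_x^{\overline x}\phi_r h\,m'\to 0$ by integrability), so the limit cannot be read off from $\hat H'$ alone. To overcome this I will apply \eqref{resolvent2}, rewritten as
\begin{equation*}
\hat H(F_r(x))=-W^{-1}\Big[\int_{\underline x}^x\psi_r(\xi)h(\xi)m'(\xi)\,d\xi+F_r(x)\int_x^{\overline x}\phi_r(\xi)h(\xi)m'(\xi)\,d\xi\Big].
\end{equation*}
For $x>x_h$ the second bracketed term is non-positive since $h<0$ on $(x,\overline x)$ and $F_r>0$. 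For the first term, $\limsup_{x\to\overline x}h<0$ supplies $c',\delta'>0$ with $h\leq-c'$ on $(\overline x-\delta',\overline x)$; the first identity in \eqref{psiphiproperties3} together with \eqref{psiphiproperties2} gives $\int_a^b\psi_r m'\to+\infty$ as $b\to\overline x$, whence $\int_{\underline x}^x\psi_r h\,m'\to-\infty$. Consequently the entire bracket tends to $-\infty$, so $\hat H(F_r(x))\to+\infty$ as $x\to\overline x$, which completes the proof.
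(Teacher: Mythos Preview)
Your proof is correct and follows essentially the same route as the paper: part (i) from Lemma \ref{lem:conv}, $\hat H(0+)=0$ from \eqref{lims}, the derivative limit $\hat H'(0+)=-\infty$ from \eqref{res3} combined with \eqref{psiphiproperties3}--\eqref{psiphiproperties2}, and the limit $\hat H(y)\to+\infty$ from \eqref{resolvent2} bounded via the same asymptotic identities. The only minor difference is organisational: you verify the monotonicity on $(\hat y_h,+\infty)$ and the location of the unique minimum directly from the sign of the integral in the formula for $\hat H'$, whereas the paper deduces these from strict concavity on $(\hat y_h,+\infty)$ together with $\hat H(y)\to+\infty$.
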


In order to prove our main results, for $i=1,2$ and $u,\,v > 0$ let us introduce the functions
\begin{align}
\label{eq:ell}
\cL_i(u,v):=\hat{G}_i(u) - \hat{L}_i(v) - \hat{G}_i'(u)(u - v).
\end{align}

\begin{theorem}[\textbf{Existence of an equilibrium}]
\label{thm:construction}
Under Assumption \ref{ass:flessi} there exists a solution $(y_1^*,y_2^*)$ of the problem:
\begin{equation}
\label{system-construction}
\text{Find $(y_1,y_2)\in (0, \hat{y}_1) \times (\hat{y}_2, +\infty)$ such that}\:\:
\left\{
\begin{array}{l}
\cL_1(y_1,y_2)=0,\\[+3pt]
\cL_2(y_2,y_1)=0.
\end{array}
\right.
\end{equation}
Writing $x^*_1:=F_r^{-1}(y_1^*)\in(\underline{x},\hat{x}_1)$ and $x^*_2:=F_r^{-1}(y_2^*)\in(\hat{x}_2,\overline{x})$ and recalling \eqref{st-times0}, the couple 
\begin{equation}
\label{Nashform}
\tau_1^*=\tau_1(x_1^*), \quad \tau_2^*=\tau_2(x_2^*)
\end{equation}
is a Nash equilibrium.
\end{theorem}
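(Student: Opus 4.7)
The proof naturally splits into three parts: existence of a solution $(y_1^*,y_2^*)$ to the algebraic system \eqref{system-construction}, identification of $(\tau_1^*,\tau_2^*)$ as mutual best responses, and the Nash property. The key technical input is the geometric characterisation of optimal stopping in the $F_r$-coordinates, together with the structural information on $\hat{G}_i$ and $\hat{L}_i$ provided by Lemma \ref{lemm:concavity} and Assumption \ref{ass:flessi}.

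For existence in \eqref{system-construction} I would use a two-stage intermediate value argument. The equation $\mathcal{L}_1(u,v)=0$ expresses geometrically that the tangent to $\hat{G}_1$ at $u$ passes through $(v,\hat{L}_1(v))$. A direct computation gives
\begin{equation*}
\partial_u\mathcal{L}_1(u,v)\;=\;-\hat{G}_1''(u)(u-v),
\end{equation*}
which is strictly positive on $(0,\hat{y}_1)\times(\hat{y}_2,\infty)$ by Lemma \ref{lemm:concavity}, so $\mathcal{L}_1(\cdot,v)$ is strictly increasing. Lemma \ref{lemm:concavity}(ii) yields $\hat{G}_1'(0+)=-\infty$ and hence $\mathcal{L}_1(0+,v)=-\infty$; strict concavity of $\hat{G}_1$ on $(\hat{y}_1,\infty)$ gives $T_{\hat{y}_1}(v)>\hat{G}_1(v)>\hat{L}_1(v)$, so $\mathcal{L}_1(\hat{y}_1,v)>0$. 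The intermediate value theorem produces a unique $R_1(v)\in(0,\hat{y}_1)$ solving $\mathcal{L}_1(R_1(v),v)=0$, and an entirely symmetric argument on $\hat{G}_2$ produces a unique $R_2(u)\in(\hat{y}_2,\infty)$; continuity of both maps follows from the implicit function theorem. Setting $\Phi(y_1):=\mathcal{L}_1(y_1,R_2(y_1))$ on $(0,\hat{y}_1)$, the same endpoint estimates yield $\Phi(0+)=-\infty$ and $\Phi(\hat{y}_1-)>0$, so a further application of IVT furnishes $y_1^*\in(0,\hat{y}_1)$ with $\Phi(y_1^*)=0$, and $(y_1^*,y_2^*):=(y_1^*,R_2(y_1^*))$ solves \eqref{system-construction}.

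It then remains to identify each equation $\mathcal{L}_i=0$ with a best-response condition. Fixing $\tau_2=\tau_2(x_2^*)$, player 1's problem becomes a one-sided optimal stopping problem on $(\underline{x},x_2^*]$ with $X$ effectively killed on hitting $x_2^*$ at cost $L_1(x_2^*)$. By the Dayanik--Karatzas characterisation (applied to a minimisation), the transformed value on $[0,y_2^*]$ equals the \emph{largest convex minorant} $\hat{V}_1$ of the function $\hat{f}_1$ that agrees with $\hat{G}_1$ on $[0,y_2^*)$ and equals $\hat{L}_1(y_2^*)$ at $y_2^*$. I would construct $\hat{V}_1$ explicitly as the function $\hat{M}_1$ coinciding with $\hat{G}_1$ on $[0,y_1^*]$ and with the tangent $T_{y_1^*}$ on $[y_1^*,y_2^*]$. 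The condition $\mathcal{L}_1(y_1^*,y_2^*)=0$ delivers the endpoint pinning $T_{y_1^*}(y_2^*)=\hat{L}_1(y_2^*)$ and slope matching at $y_1^*$, so $\hat{M}_1$ is $C^1$ and convex; the dominance $\hat{M}_1\le \hat{G}_1$ on $(y_1^*,y_2^*]$ is obtained by studying the sign of $\hat{G}_1-T_{y_1^*}$, which vanishes with zero derivative at $y_1^*$, grows on $(y_1^*,\hat{y}_1)$ by strict convexity, and attains the positive value $\hat{G}_1(y_2^*)-\hat{L}_1(y_2^*)$ at $y_2^*$. Maximality of $\hat{M}_1$ is standard: any competing convex minorant lies below the chord through its endpoint values, themselves bounded by $\hat{G}_1(y_1^*)$ and $\hat{L}_1(y_2^*)$. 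This identifies $\tau_1(x_1^*)$ as player 1's best reply to $\tau_2(x_2^*)$, and a symmetric argument using strict concavity of $\hat{G}_2$ on $(0,\hat{y}_2)$ and strict convexity beyond $\hat{y}_2$ handles player 2.

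The main obstacle, in my view, lies in the careful alignment between the geometric convex-minorant picture and the probabilistic best response, particularly in verifying that $\hat{M}_1$ stays below $\hat{G}_1$ on the transition interval $[y_1^*,\hat{y}_1]$ where $\hat{G}_1$ passes from strictly convex to strictly concave: this is not automatic from local convexity and genuinely relies on the tangency condition $\mathcal{L}_1(y_1^*,y_2^*)=0$. A subsidiary technical point is controlling $\hat{L}_i$ near the natural boundaries so that $\Phi(0+)=-\infty$ holds even if $R_2(y_1)$ does not remain bounded as $y_1\downarrow 0$; this follows from the integrability bound \eqref{ass:lim} in Assumption \ref{ass:flessi}(iii).
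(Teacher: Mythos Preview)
Your proposal is correct and follows essentially the same strategy as the paper: construct the best-reply maps via the smooth-fit equations $\mathcal{L}_i=0$, verify optimality through the Dayanik--Karatzas geometric picture (which the paper packages as Proposition~\ref{prop:app-OS} in the appendix and then invokes), and obtain the equilibrium as a fixed point of the composed map by the intermediate value theorem. The only cosmetic differences are that you run the final IVT through $\Phi(y_1)=\mathcal{L}_1(y_1,R_2(y_1))$ whereas the paper uses the symmetric choice $u\mapsto\mathcal{L}_2(u,y_1(u))$, and that your phrase ``entirely symmetric argument'' for the existence of $R_2$ slightly hides the one genuinely asymmetric step---the limit $\hat{G}_2(u)-u\,\hat{G}_2'(u)\to-\infty$ as $u\to\infty$---which is not given by Lemma~\ref{lemm:concavity} and which the paper obtains via the integral representation \eqref{resolvent2} together with \eqref{psiphiproperties3} and \eqref{psiphiproperties2}.
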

\begin{proof}
The proof is divided into three steps. In Step 1 we assume that $P_1$ picks a stopping time $\tau_1(z)$ as defined in \eqref{st-times0}, for some $z\in(\underline{x},\hat{x}_1)$. We then construct $P_2$'s best reply, 
showing that it has the form $\tau_2(x_2)$ as defined in \eqref{st-times0}, for some $x_2\in(\hat{x}_2,\overline{x})$ which depends on $z$.
Step 2 reverses the roles of the two players, and in Step 3 we combine these results to construct a Nash equilibrium. 
\vspace{+5pt}

\noindent\emph{Step 1 (Player $2$'s best reply)}. Given $P_1$'s choice $\tau_1(z)$ described above, $P_2$ is faced with an optimal stopping problem of the form
\begin{equation}
\label{OS-P_2}
\inf_{\tau \in \mathcal{T}}\EE_x\Big[e^{-r\tau}G_2(X_{\tau})\mathds{1}_{\{\tau \le \tau_1(z)\}} + L_2(X_{\tau_1(z)}) e^{-r\tau_1(z)}\mathds{1}_{\{\tau > \tau_1(z)\}}\Big].
\end{equation}
Setting $\zeta:=F_r(z)$, it is shown in Proposition \ref{prop:app-OS} that if the equation
\begin{align}\label{smf00}
\cL_2(\,\cdot\,,\zeta) = 0
\end{align}
has a solution $y_2(\zeta) \in (\hat{y}_2,+\infty)$ (which is therefore unique), then the stopping time $\tau_2(x_2)$ with 
$$x_2=x_2(z):=F^{-1}_r(y_2(\zeta))$$ 
is optimal in \eqref{OS-P_2}. In the rest of this step we prove existence and uniqueness of the solution $y_2(\zeta)\in(\hat{y}_2,+\infty)$ to \eqref{smf00}.

Notice that $\zeta \in (0,\hat{y}_1)$, hence $\zeta < \hat{y}_2$, and by strict concavity of $\hat{G}_2$ on $(0,\hat{y}_2)$ one has
\begin{align}\label{pos1}
\hat{G}'_2(\hat{y}_2)(\hat{y}_2-\zeta)< 
\hat{G}_2(\hat{y}_2) - \hat{G}_2(\zeta).
\end{align}
By substituting the above inequality into \eqref{eq:ell} we get
\begin{align}\label{pos2}
\cL_2(\hat{y}_2,\zeta)> \hat{G}_2(\zeta) - \hat{L}_2(\zeta)>0,
\end{align}
(noting that $\hat{G}_2 > \hat{L}_2$ by $(i)$ in Assumption \ref{ass:flessi}). Also, $u\mapsto\cL_2(u,\zeta)$ is decreasing for $u\in(\hat{y}_2,+\infty)$ since $\frac{\partial}{\partial u}\cL_2(u,\zeta) = -\hat{G}''_2(u)(u-\zeta) <0$ by the convexity of $\hat{G}_2$ and the fact that $\zeta<\hat{y}_1<\hat{y}_2<u$.

Next we show that $\cL_2(u,\zeta)\to -\infty$ as $u\to +\infty$. To this end, note that $\hat{G}_2$ is decreasing on $(\hat{y}_2,+\infty)$ (Lemma \ref{lemm:concavity}), so 
\begin{align}\label{arg}
\lim_{u\to\infty}\big[ \hat{G}_2(u)-\hat{G}'_2(u)(u-\zeta)\big]\le \lim_{u\to\infty}\big[\hat{G}_2(u)-\hat{G}'_2(u)u\big].
\end{align}
Since $\hat{L}_2$ is bounded on $(0,\hat{y}_1)$ (Assumption \ref{ass:flessi}), from \eqref{eq:ell} it is now sufficient to establish that the latter limit equals $-\infty$.

The chain rule and \eqref{res3} (taking $H=G_2$ and $h=g_2$) give
$$\hat{G}'_2(u) u=\frac{u}{F'_r\big(F^{-1}_r(u)\big)}\Big(\frac{G_2}{\phi_r}\Big)'\big(F^{-1}_r(u)\big)=
-\frac{u}{W}\int_{F^{-1}_r(u)}^{\overline{x}}\phi_r(t)g_2(t)m'(t)dt.$$
Setting $u=F_r(s)$, $s \in \cI$, from \eqref{resolvent2} we obtain
\begin{align}
\hat{G}_2(u)-\hat{G}'_2(u)u=-W^{-1}\int_{\underline{x}}^s{\psi_r(t)g_2(t)m'(t)dt}.
\end{align}
Fix $\delta>0$. By the conditions on our stopping costs, the function $g_2$ is bounded below on $[\hat{x}_2+\delta,\overline{x})$ by a constant $\eps_\delta>0$. When $s>\hat{x}_2+\delta$ we split the integral above on the intervals $(\underline{x},\hat{x}_2+\delta]$ and $[\hat{x}_2+\delta,\overline{x})$, then use the bound on $g_2$ and \eqref{psiphiproperties3} to obtain that
\begin{align}\label{eq:lim}
\hat{G}_2(u)-\hat{G}'_2(u)u\le&-W^{-1}\Big[\int_{\underline{x}}^{\hat{x}_2+\delta}\psi_r(t)g_2(t)m'(t)dt+\frac{\eps_\delta}{r}\Big(\frac{\psi'_r(s)}{S'(s)}-\frac{\psi'_r(\hat{x}_2+\delta)}{S'(\hat{x}_2+\delta)}\Big)\Big]
\end{align}
which tends to $-\infty$ as $s \uparrow \overline{x}$ by \eqref{psiphiproperties2}. This  completes Step 1.

As we will see in the proof of Proposition \ref{prop:values}, equation \eqref{smf00} may be interpreted as a geometric version of the so called smooth-fit equation for $P_2$, which specifies that \eqref{OS-P_2} should be continuously differentiable in $x$ across the optimal boundary $x^*_2$. From the arbitrariness of $z\in(\underline{x},\hat{x}_1)$ and a simple application of the implicit function theorem we obtain that the map $z\mapsto x_2(z)$ is continuous on $(\underline{x},\hat{x}_1)$, or equivalently $y_2(\,\cdot\,)\in C((0,\hat{y}_1))$ (see, e.g., Th.\ 10.2.1 on p.\ 270 of \cite{Dieudonne}).
\vspace{+5pt}

\noindent\emph{Step 2 (Player $1$'s best reply)} Similarly suppose that $P_2$ picks $z\in(\hat{x}_2,\overline{x})$ and decides to stop at time $\tau_2(z)$. Then $P_1$ is faced with an optimal stopping problem of the form
\begin{equation}
\label{OS-P_1}
\inf_{\tau \in \mathcal{T}}\EE_x\Big[e^{-r\tau}G_1(X_{\tau})\mathds{1}_{\{\tau < \tau_2(z)\}} + L_1(X_{\tau_2(z)}) e^{-r\tau_2(z)}\mathds{1}_{\{\tau \geq \tau_2(z)\}}\Big].
\end{equation}
It may be proven just as in Step 1 that (with $\zeta:=F_r(z)$) the equation 
\begin{align}\label{smf01}
\cL_1(\,\cdot\,,\zeta)=0
\end{align}
has a unique solution $y_1(\zeta)\in (0,\hat{y}_1)$. Notice by \eqref{smf01} that $y_1(\zeta)>0$ is guaranteed in this setting, by observing that $\hat{G}'_1(0+)=-\infty$ (see Lemma \ref{lemm:concavity}). Then an optimal stopping time for $P_1$ is $\tau_1(x_1)$ where the optimal boundary point is $x_1=x_1(z):=F_r^{-1}(y_1(\zeta))$ (see Appendix \ref{app:secondOS}).

Again, the map $z\mapsto x_1(z)$ is continuous on $(\hat{x}_2,\overline{x})$ (or equivalently $y_1(\,\cdot\,)\in C((\hat{y}_2,+\infty))$) by the implicit function theorem and arbitrariness of $z$.
\vspace{+5pt}

\noindent\emph{Step 3 (A fixed point)}. 
With the continuous functions $x_2(\cdot)$, $x_1(\cdot)$ defined as in Steps 1 and 2 respectively, suppose now that there exist two points $x_1^*,x_2^* \in \cI$ with $x_1^*<x_2^*$ such that
\begin{eqnarray}\label{eq:fpt1}
 x_2^*&=&x_2(x_1^*), \\ x_1^*&=&x_1(x_2^*). \label{eq:fpt2}
\end{eqnarray}
Let us take $\tau_1^*=\tau_1(x^*_1)$, $\tau_2^*=\tau_2(x^*_2)$ and show that they form a Nash equilibrium (Definition \ref{def:Nash}). Since Step 1 constructs $P_2$'s best reponse to $\tau_1^*$ over all stopping times $\tau \in \cT$, we have from \eqref{eq:fpt1} that the lower inequality in \eqref{Nashequilibrium} is satisfied. Similarly Step 2 implies that the upper inequality in \eqref{Nashequilibrium} is satisfied, and so the pair $(\tau_1^*,\tau_2^*)$ is a Nash equilibrium. In this step we will therefore establish the existence of $x_1^*<x_2^*$ satisfying \eqref{eq:fpt1}--\eqref{eq:fpt2}.

We thus seek $y^*_2\in(\hat{y}_2,+\infty)$ such that $\cL_2(y^*_2,y_1(y^*_2))=0$. By the regularity of $G_2$, $L_2$ and $y_1$ we have $u\mapsto\cL_2(u,y_1(u))$ continuous on $(\hat{y}_2,+\infty)$. We conclude just as in \eqref{pos1} and \eqref{pos2} that $\cL_2(\hat{y}_2,y_1(\hat{y}_2))>0$. Since the point $\zeta\in (0,\hat{y}_1)$ in \eqref{arg} does not need to be constant for the latter inequality to hold, the proof of Step 1 also gives that $\lim_{u\uparrow+\infty}\cL_2(u,y_1(u))=-\infty$ and we conclude that $\cL_2(\,\cdot\,,y_1(\,\cdot\,))$ has a root $y^*_2$. From Step 1 we know that $\mathcal{L}_2(\cdot, y_1(y^*_2))=0$ has a unique solution, denoted by $y_2(y_1(y^*_2))$, hence $y_2^*=y_2(y_1(y^*_2))$. Therefore setting $y^*_1:=y_1(y^*_2)$, we have obtained a solution of \eqref{eq:fpt1}-\eqref{eq:fpt2} with  
$x^*_1:=F^{-1}_r(y^*_1)$ and $x^*_2:=F^{-1}_r(y^*_2)$.
\end{proof}

It is worth observing that if $(y_1,y_2)\in(0,\hat{y}_1)\times(\hat{y}_2,+\infty)$ is an arbitrary solution of \eqref{system-construction}, then in particular $y_2$ is the unique solution of $\cL_2(\,\cdot\,,y_1)=0$ in $(\hat{y}_2,+\infty)$ by Step 1 in the proof above. Therefore, recalling \eqref{st-times0} and Step 1, the stopping time $\tau_2(x_2)$ with $x_2:=F^{-1}_r(y_2)$ is optimal in \eqref{OS-P_2} when $z=F^{-1}_r(y_1)=:x_1$. Analogously, from Step 2 we find that $\tau_1(x_1)$ is optimal in \eqref{OS-P_1} when $z=x_2$ and therefore the couple $(\tau_1(x_1),\tau_2(x_2))$ forms a Nash equilibrium. 

Conversely suppose that a couple $(\tau_1(x_1),\tau_2(x_2))$, $x_1<x_2$, forms a Nash equilibrium in the class of threshold type strategies \eqref{st-times0}. Then from Step 1 we have that $y_2:=F_r(x_2)$ must be the solution of $\cL_2(\,\cdot\,,y_1)=0$ in $(\hat{y}_2,+\infty)$, with $y_1:=F_r(x_1)$. Similarly Step 2 implies that $y_1$ solves $\cL_1(\,\cdot\,,y_2)=0$ in $(0,\hat{y}_1)$.

Therefore we have established an equivalence which is summarised in the next corollary.
\begin{coroll}\label{cor:exist}
Let Assumption \ref{ass:flessi} hold. A couple $(\hat{\tau}_1, \hat \tau_2):=(\tau_1(x_1),\tau_2(x_2))$, with $\underline x<x_1<x_2<\overline x$, forms a Nash equilibrium in the class of threshold type strategies \eqref{st-times} if and only if $y_1=F_r(x_1)$ and $y_2=F_r(x_2)$ are a solution of problem \eqref{system-construction}.
\end{coroll}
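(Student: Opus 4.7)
The plan is to prove this as a two-way implication, leveraging almost verbatim the analysis already developed in Steps 1 and 2 of the proof of Theorem \ref{thm:construction} together with Proposition \ref{prop:app-OS} in the appendix. The statement is really a clean re-packaging of those steps once the fixed-point construction of Step 3 is stripped away, so the proof should be short and there is no single "hard" step; the main care is just to keep track of which role each player is playing on each side of the equivalence.

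For the implication ($\Leftarrow$), I start from an arbitrary solution $(y_1,y_2)\in(0,\hat y_1)\times(\hat y_2,+\infty)$ of \eqref{system-construction} and set $x_i:=F_r^{-1}(y_i)$. Since $\mathcal{L}_2(y_2,y_1)=0$, Step 1 of Theorem \ref{thm:construction} applies with $z=x_1$, $\zeta=y_1\in(0,\hat y_1)$: the function $u\mapsto \mathcal{L}_2(u,y_1)$ is strictly decreasing on $(\hat y_2,+\infty)$ and hence has $y_2$ as its unique root in that interval, so Proposition \ref{prop:app-OS} identifies $\tau_2(x_2)$ as optimal for $P_2$'s response problem \eqref{OS-P_2}. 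This gives the second inequality in \eqref{Nashequilibrium}. An entirely symmetric argument, using $\mathcal{L}_1(y_1,y_2)=0$ together with Step 2, shows that $\tau_1(x_1)$ is optimal for \eqref{OS-P_1} and yields the first inequality, so $(\tau_1(x_1),\tau_2(x_2))$ is a Nash equilibrium.

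For the implication ($\Rightarrow$), I take a threshold-type Nash equilibrium $(\tau_1(x_1),\tau_2(x_2))$ with $\underline x<x_1<x_2<\overline x$ and must recover that $(y_1,y_2):=(F_r(x_1),F_r(x_2))$ solves \eqref{system-construction}. By Definition \ref{def:Nash}, $\tau_2(x_2)$ minimises $\tau_2\mapsto \mathcal{J}_2(\tau_1(x_1),\tau_2;x)$ over all $\tau\in\mathcal{T}$, i.e.\ $\tau_2(x_2)$ is optimal in \eqref{OS-P_2} with $z=x_1$, $\zeta=y_1$. Invoking Proposition \ref{prop:app-OS} in the other direction, any threshold $x_2$ that is optimal for this single-agent stopping problem must correspond to a root of $\mathcal{L}_2(\,\cdot\,,y_1)=0$ in $(\hat y_2,+\infty)$, which forces $\mathcal{L}_2(y_2,y_1)=0$; here one needs $x_2>\hat x_2$ (equivalently $y_2>\hat y_2$), which follows from the standard observation that $P_2$ has no incentive to stop on $\{g_2<0\}$, together with Assumption \ref{ass:flessi} and the geometry of $g_2$. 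Interchanging the roles of $P_1$ and $P_2$ and invoking Step 2 in the same way gives $\mathcal{L}_1(y_1,y_2)=0$ with $y_1\in(0,\hat y_1)$. The pair $(y_1,y_2)$ therefore satisfies \eqref{system-construction}, completing the equivalence. \cvd
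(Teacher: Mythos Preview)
Your proof is correct and follows essentially the same approach as the paper, which proves the corollary in the paragraph immediately preceding its statement: both directions are obtained by reading off Steps~1 and~2 of Theorem~\ref{thm:construction}. You are in fact slightly more careful than the paper in flagging that the converse direction requires $x_2>\hat x_2$ and $x_1<\hat x_1$, which the paper leaves implicit.

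One wording caveat: saying you invoke Proposition~\ref{prop:app-OS} ``in the other direction'' is not quite accurate, since that proposition is a one-sided verification (sufficiency) result. The correct logic, which you are really using, is that Step~1 identifies the \emph{unique} optimal threshold for $P_2$'s problem via the value function computed in Proposition~\ref{prop:app-OS} (the stopping region $\{V_o=G_2\}$ is exactly $[x_*,\overline x)$, since the tangent line lies strictly below $\hat G_2$ on $(y_o,y_*)$); hence any threshold realising a best reply must coincide with that one and therefore satisfy $\cL_2(\cdot,y_1)=0$. The paper's own argument is equally terse on this point.
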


Next we verify some analytical expressions associated to the equilibrium payoffs constructed above. We apply It\^o's formula but note that direct calculations involving the Laplace transforms of $\tau^*_i$, $i=1,2$ (see Theorem \ref{thm:construction}) and the equilibrium payoffs of the game would also suffice.
\begin{prop}
\label{prop:values}
Let Assumption \ref{ass:flessi} hold and let $(y_1,y_2)\in(0,\hat{y}_1)\times(\hat{y}_2,+\infty)$ be a solution of \eqref{system-construction}. With $x_1,x_2,\hat{\tau}_1,\hat{\tau}_2$ as in Corollary \ref{cor:exist}, $(\hat{\tau}_1,\hat{\tau}_2)$ forms a Nash equilibrium. Moreover the functions
\begin{equation}
\label{eq:value1}
v_1(x):=
\left\{
\begin{array}{ll}
G_1(x), & x \leq x_1,\\[+4pt]
m_1\psi_r(x)+q_1\phi_r(x), &  x_1 < x < x_2,\\[+4pt]
L_1(x), & x \geq x_2,
\end{array}
\right.
\end{equation}
and
\begin{equation}
\label{eq:value2}
v_2(x):=
\left\{
\begin{array}{ll}
L_2(x), & x \leq x_1,\\[+4pt]
m_2\psi_r(x)+q_2\phi_r(x), & x_1 < x < x_2,\\[+4pt]
G_2(x), & x \geq x_2,
\end{array}
\right.
\end{equation}
with
\begin{align}
\label{eqn:lincf1}m_1:=\frac{(G_1/\phi_r)(x_1)-(L_1/\phi_r)(x_2)}{F_r(x_1)-F_r(x_2)},\qquad
q_1:=\frac{L_1}{\phi_r}(x_2)-m_1 F_r(x_2),\\[+4pt]
m_2:=\frac{(G_2/\phi_r)(x_2)-(L_2/\phi_r)(x_1)}{F_r(x_2)-F_r(x_1)},\qquad
q_2:=\frac{L_2}{\phi_r}(x_1)-m_2 F_r(x_1),
\end{align}
coincide with the equilibrium payoffs of the two players, i.e.~$v_i(x)=\cJ_i(\hat{\tau}_1,\hat{\tau}_2;x)$, $i=1,2$. In particular $v_1\in C({\cI})$ with $v_1\in W^{2,\infty}_{loc}(\underline{x},x_2)$, $v_2\in C({\cI})$ with $v_2\in W^{2,\infty}_{loc}(x_1,\overline{x})$ and they solve 
\begin{align}
\label{freeb1}&(\LL_X-r)v_i(x) =0, & x_1<x<x_2, \:i=1,2\\[+3pt]
\label{freeb2}&(\LL_X-r)v_1(x) >0, & \underline{x}<x<x_1\\[+3pt]
\label{freeb3}&(\LL_X-r)v_2(x) >0, & x_2<x<\overline{x}\\[+3pt]
\label{freeb4}&v_i\le G_i,& x\in\cI,\:i=1,2.
\end{align}
\end{prop}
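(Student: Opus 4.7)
The plan is first to identify the functions $v_1,v_2$ defined by \eqref{eq:value1}--\eqref{eq:value2} with the equilibrium payoffs $\cJ_i(\hat\tau_1,\hat\tau_2;\cdot)$ (the Nash property itself already follows from Corollary \ref{cor:exist}), and then to read off the regularity and the free boundary system \eqref{freeb1}--\eqref{freeb4}. The identification splits naturally according to whether the starting point lies outside or inside the interval $(x_1,x_2)$. In the outer regions, path regularity of $X$ gives $\hat\tau_1=0$ $\PP_x$-a.s.\ for $x\le x_1$; since $x_1<x_2$ the event $\{\hat\tau_1<\hat\tau_2\}$ has full measure, so $\cJ_1(\hat\tau_1,\hat\tau_2;x)=G_1(x)$ and $\cJ_2(\hat\tau_1,\hat\tau_2;x)=L_2(x)$, matching the definitions. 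The symmetric computation handles $x\ge x_2$.

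For $x\in(x_1,x_2)$, I would first check by direct computation using the explicit formulas for $m_i,q_i$ that $m_1\psi_r(x_1)+q_1\phi_r(x_1)=G_1(x_1)$ and $m_1\psi_r(x_2)+q_1\phi_r(x_2)=L_1(x_2)$ (and similarly for $v_2$), so that $v_i\in C(\cI)$. To identify $v_i$ with the payoff on $(x_1,x_2)$, I apply It\^o's formula to $e^{-rt}[m_i\psi_r(X_t)+q_i\phi_r(X_t)]$ on $[0,\sigma\wedge n]$, where $\sigma:=\hat\tau_1\wedge\hat\tau_2$. Since $(\LL_X-r)(m_i\psi_r+q_i\phi_r)=0$ the drift vanishes and the resulting process is a true martingale on the compact interval $[x_1,x_2]$ (all integrands being bounded by the continuity of $\mu,\sigma,\psi_r,\phi_r,\psi_r',\phi_r'$). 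Noting that $\sigma<\infty$ $\PP_x$-a.s.\ because $X$ is regular and $[x_1,x_2]$ is a compact subinterval of $\cI$, taking expectations and letting $n\to\infty$ by dominated convergence yields
\begin{equation*}
m_i\psi_r(x)+q_i\phi_r(x)=\EE_x\bigl[e^{-r\sigma}\bigl(m_i\psi_r(X_\sigma)+q_i\phi_r(X_\sigma)\bigr)\bigr].
\end{equation*}
Path continuity of $X$ partitions $\Omega$ (up to a null set) into $\{\hat\tau_1<\hat\tau_2\}$, on which $X_\sigma=x_1$, and $\{\hat\tau_2<\hat\tau_1\}$, on which $X_\sigma=x_2$; combined with the boundary value identities this shows the right-hand side equals exactly $\cJ_i(\hat\tau_1,\hat\tau_2;x)$. (The alternative route via the two-sided Laplace transforms of $\hat\tau_1,\hat\tau_2$, mentioned in the statement, gives the same conclusion.)

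With the identification in hand, the free boundary assertions are short. Equation \eqref{freeb1} is immediate because $v_i$ is a linear combination of $\psi_r,\phi_r$ on $(x_1,x_2)$. For \eqref{freeb2}, $v_1=G_1$ on $(\underline x,x_1)$ and $x_1<\hat x_1$ together with $G_1\in\cA_1$ (Assumption \ref{ass:flessi}) give $(\LL_X-r)v_1=g_1>0$; \eqref{freeb3} follows symmetrically from $G_2\in\cA_2$ and $x_2>\hat x_2$. For \eqref{freeb4}, $v_i$ equals the value of $P_i$'s best-reply stopping problem from Steps 1--2 of Theorem \ref{thm:construction}, so choosing the admissible strategy $\tau=0$ combined with Assumption \ref{ass:flessi}(i) ($L_i<G_i$) yields $v_i\le G_i$. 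The Sobolev claim $v_1\in W^{2,\infty}_{loc}(\underline x,x_2)$ reduces to showing that $v_1$ is $C^1$ at $x_1$, the remaining second-derivative bound following from the fact that each piece is $C^2$. This $C^1$ matching is precisely where the fixed-point equation \eqref{system-construction} plays its role: a short calculation using $\hat G_1'(y)=(G_1/\phi_r)'(x)/F_r'(x)$ shows that $\cL_1(y_1,y_2)=0$ is equivalent to $\hat G_1'(y_1)=m_1$, which in turn (using continuity of $v_1$ and $\phi_r$) is equivalent to $G_1'(x_1)=m_1\psi_r'(x_1)+q_1\phi_r'(x_1)$, i.e.\ smooth fit. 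The analogous calculation at $x_2$ from $\cL_2(y_2,y_1)=0$ gives $v_2\in W^{2,\infty}_{loc}(x_1,\overline x)$.

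The main obstacle is really only the recognition that the two coupled geometric equations \eqref{system-construction} are the smooth-fit identities for the pair $(v_1,v_2)$ at the respective thresholds; once this reformulation is made the argument becomes mechanical. The It\^o identification on $(x_1,x_2)$ is completely standard thanks to the compactness of $[x_1,x_2]$ and the a.s.\ finiteness of $\sigma$, the sign conditions follow directly from the classes $\cA_i$, and the inequality $v_i\le G_i$ is built into the best-reply construction.
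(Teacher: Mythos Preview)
Your proof is correct and covers all the claims. The organisation differs from the paper's: you first identify $v_i$ with $\cJ_i(\hat\tau_1,\hat\tau_2;\cdot)$ directly and then read off the free-boundary relations, whereas the paper proceeds in the reverse order (regularity, then \eqref{freeb1}--\eqref{freeb4}, then a verification via It\^o--Tanaka applied to the full piecewise function $v_1$). Your route is a bit lighter at the identification stage, since applying It\^o only to the harmonic piece $m_i\psi_r+q_i\phi_r$ on the compact interval $[x_1,x_2]$ avoids the It\^o--Tanaka localisation the paper needs; the paper itself notes that the Laplace-transform/direct computation you use ``would also suffice''. The more substantive difference is your argument for \eqref{freeb4}: you obtain $v_i\le G_i$ from the Nash/best-reply inequality by testing $\tau=0$, while the paper gives a purely geometric proof, using convexity of $\hat G_1$ on $(0,\hat y_1)$, the smooth-fit tangency at $y_1$, and $\hat v_1(y_2)=\hat L_1(y_2)<\hat G_1(y_2)$ to conclude that the straight line $\hat v_1$ stays below $\hat G_1$ on $(y_1,y_2)$. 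Your argument is shorter and uses Corollary~\ref{cor:exist} as input; the paper's argument is self-contained and, as they remark, makes Step~3 an alternative verification of the Nash property itself (in the spirit of \cite{BensoussanFriedman}), independent of Corollary~\ref{cor:exist}.
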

\begin{proof}
The fact that $(\hat{\tau}_1,\hat{\tau}_2)$ defines a Nash equilibrium follows from Corollary \ref{cor:exist}. The rest of the proof is organised in three steps.
\vspace{+5pt}

\noindent\emph{Step 1 (Regularity of $v_i$)}. At this point we recall the smooth change of variables defined in \eqref{def:hat}, writing  $x=F^{-1}_r(y)$. Applying this change of variables to $v_1$ in \eqref{eq:value1}, the function $y \mapsto \hat{v}_1(y)$ is a \emph{straight line} on $(y_1,y_2)$. The coefficients of this straight line, given in \eqref{eqn:lincf1}, ensure that $\hat{v}_1$ is continuous on $(0,\infty)$, and hence that $v_1 \in C({\cI})$.

Further by the definition of $\cL_1$ in \eqref{eq:ell}, it follows from the system \eqref{system-construction} that the gradient of this straight line is equal to the derivative of $\hat{G}_1$ at $y_1$. We conclude that $\hat{v}_1$ is continuously differentiable at $y_1$ or, equivalently, that $v_1$ is continuously differentiable at $x_1$. In this sense equation \eqref{smf01} is a geometric version of the smooth fit equation for $v_1$. It follows immediately that $v_1$ is continuously differentiable on $(\underline{x},x_2)$. 
Similarly by direct calculations and \eqref{eq:value1}-\eqref{eq:value2} we can check that $v''_1$ is indeed a locally bounded function on $(\underline{x},x_2)$, hence $v_1\in W^{2,\infty}_{loc}(\underline{x},x_2)$.  We can proceed in a similar way for $v_2$.
\vspace{+5pt}

\noindent\emph{Step 2 (Free boundary problem)}. The equations \eqref{freeb1}, \eqref{freeb2} and \eqref{freeb3} follow directly from the definition of $v_i$, by recalling that $\phi_r$ and $\psi_r$ solve $\LL_Xu-ru=0$, and by the fact that $x_1<\hat{x}_1$ and $x_2>\hat{x}_2$. For the final inequalities (the so-called {\em obstacle conditions}) we refer again to the transformation  \eqref{def:hat}. The transformed function $\hat{G}_1$ is convex in $(0,\hat{y}_1)$, it reaches its unique global minimum therein and it is concave in $(\hat{y}_1,+\infty)$. By the smooth fit property established above at $y_1 \in (0,\hat{y}_1)$, it follows from $\hat{v}_1(y_2)=\hat{L}_1(y_2)<\hat{G}_1(y_2)$ that we must also have $\hat{v}_1\le \hat{G}_1$ on $(y_1,y_2)$. Therefore we have $v_1\le G_1$ in $(\underline{x},x_2)$ and $v_1=L_1<G_1$ in $[x_2,\overline{x})$. Symmetric arguments hold for $v_2$.

\vspace{+5pt}

\noindent\emph{Step 3 (Verification argument)}. Here we show that indeed $v_i$, $i=1,2$ coincide with the equilibrium payoffs. As a byproduct, this step offers an alternative way of showing that $(\hat{\tau}_1,\hat{\tau}_2)$ is a Nash equilibrium, starting from the solution of \eqref{freeb1}--\eqref{freeb4} (this is the original approach of \cite{BensoussanFriedman}).

Let $\sigma\in\cT$ be arbitrary. Then we have
\begin{align}\label{dyn00}
v_1(x)=&\EE_x\Big[e^{-r(\sigma\wedge\hat{\tau}_2)}v_1(X_{\sigma\wedge\hat{\tau}_2})-\int_0^{\sigma\wedge\hat{\tau}_2}e^{-rt}(\LL_X-r)v_1(X_t)dt\Big]
\nonumber\\[+3pt]
\le& \EE_x\Big[e^{-r\sigma}G_1(X^x_{\sigma})\mathds{1}_{\{\sigma<\hat{\tau}_2\}}+e^{-r\hat{\tau}_2}L_1(X_{\hat{\tau}_2})\mathds{1}_
{\{\sigma\ge\hat{\tau}_2\}}\Big]=\cJ_1(\sigma,\hat{\tau}_2;x).
\end{align}
Here the first line follows from It\^o-Tanaka's formula (justified by the regularity of $v_1$) and a standard localisation argument, and the second line follows from \eqref{eq:value1}, \eqref{freeb1}, \eqref{freeb2} and \eqref{freeb4}. In particular, setting $\sigma=\hat{\tau}_1$ in \eqref{dyn00} we obtain $v_1(x)=\cJ_1(\hat{\tau}_1,\hat{\tau}_2;x)$. Arguing similarly for $v_2$ yields the claimed equivalence of $v_i$, $i=1,2$ with the equilibrium payoffs.
\end{proof}

The application of It\^o's formula in Step 3 of the latter proof also yields the following (sub)-martingale property of the processes $t\mapsto e^{-rt}v_i(X_t)$, $i=1,2$. This is the analogue in our game setting of the well established (sub)-martingale property in optimal stopping problems with minimisation over stopping times (see, e.g., \cite{Pham}, Chapter 5, Section 2.3). 
\begin{coroll}\label{cor:supsubarm}
Let $(\hat{\tau}_1,\hat{\tau}_2)$ be as in Proposition \ref{prop:values} and $v_i$, $i=1,2$ the related equilibrium payoffs for the two players. For $i,j=1,2$ and $i\neq j$ set
\begin{align}
Y^i_t:=e^{-rt}v_i(X_{t}),\:\:\:G^i_t:=e^{-rt}G_i(X_{t}),\:\:\:
L^i_t:=e^{-rt}L_i(X_{t}),\quad t\ge0
\end{align}
then $(Y^i_{t\wedge\hat{\tau}_j})_{t\ge0}$ is a continuous sub-martingale, $(Y^i_{t\wedge\hat{\tau}_i\wedge\hat{\tau}_j})_{t\ge0}$ is a continuous martingale, $Y^i_t\le G^i_t$ for all $t\ge0$ and $Y^i_{\hat{\tau}_i\wedge\hat{\tau}_j}=G^i_{\hat{\tau}_i}\mathds{1}_{\{\hat{\tau}_i<\hat{\tau}_j\}}+L^i_{\hat{\tau}_j}\mathds{1}_{\{\hat{\tau}_i
>\hat{\tau}_j\}}$ (notice that $\PP_x(\hat \tau_1=\hat \tau_2)=0$ for all $x\in\cI$).
\end{coroll}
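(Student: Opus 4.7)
The main tool is It\^o--Tanaka's formula applied to $Y^i_t = e^{-rt}v_i(X_t)$, which largely replicates Step 3 of the proof of Proposition \ref{prop:values} (cf.\ \eqref{dyn00}). Since Proposition \ref{prop:values} yields $v_1 \in W^{2,\infty}_{loc}(\underline x, x_2)$ and $v_2 \in W^{2,\infty}_{loc}(x_1,\overline x)$, along a localising sequence $(\sigma_n)$ one obtains
\[
Y^1_{t\wedge\hat\tau_2 \wedge \sigma_n} = v_1(x) + \int_0^{t\wedge\hat\tau_2\wedge\sigma_n}\!\!e^{-rs}(\LL_X - r)v_1(X_s)\,ds + M^1_{t\wedge\hat\tau_2\wedge\sigma_n},
\]
with $M^1$ a continuous local martingale starting at zero, and an analogous identity for $Y^2$. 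Continuity of the stopped processes is inherited from the continuity of $v_i$ and of the paths of $X$, while the pointwise obstacle bound $Y^i_t\le G^i_t$ is just \eqref{freeb4} multiplied by $e^{-rt}>0$.

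The (sub)-martingale claims follow from the sign of the drift term above. On $[0,\hat\tau_2)$ the process $X$ remains in $(\underline x,x_2)$ where \eqref{freeb1}--\eqref{freeb2} give $(\LL_X - r)v_1\ge 0$, so the Lebesgue integral is non-decreasing; once $M^1$ stopped at $\hat\tau_2$ is promoted to a true martingale (via the standard localisation permitted by the integrability \eqref{lims2} and \eqref{ass:lim} together with the explicit form \eqref{eq:value1} of $v_1$) the process $(Y^1_{t\wedge\hat\tau_2})_{t\ge 0}$ is a continuous sub-martingale. On the smaller stochastic interval $[0,\hat\tau_1\wedge\hat\tau_2)$, $X$ stays in the bounded set $(x_1,x_2)$ where \eqref{freeb1} gives $(\LL_X - r)v_1 \equiv 0$; the drift term vanishes and $Y^1_{t\wedge\hat\tau_1\wedge\hat\tau_2} = v_1(x) + M^1_{t\wedge\hat\tau_1\wedge\hat\tau_2}$ is a continuous martingale (integrability here is immediate since $v_1$ is bounded on $[x_1,x_2]$). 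Symmetric arguments cover $Y^2$.

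For the terminal value, path continuity and regularity of $X$ give $X_{\hat\tau_i} = x^*_i$ on $\{0<\hat\tau_i<\infty\}$; combined with \eqref{eq:value1}--\eqref{eq:value2} this yields $v_i(X_{\hat\tau_1\wedge\hat\tau_2}) = G_i(X_{\hat\tau_i})$ on $\{\hat\tau_i<\hat\tau_j\}$ and $v_i(X_{\hat\tau_1\wedge\hat\tau_2}) = L_i(X_{\hat\tau_j})$ on $\{\hat\tau_i>\hat\tau_j\}$, whence the stated identity after multiplying by $e^{-r(\hat\tau_1\wedge\hat\tau_2)}$. Finally $\PP_x(\hat\tau_1 = \hat\tau_2) = 0$ is checked by cases: if $x\le x^*_1$ regularity of $X$ at $x^*_1$ gives $\hat\tau_1=0$ while $\hat\tau_2>0$; the case $x\ge x^*_2$ is symmetric; and for $x\in(x^*_1,x^*_2)$ a simultaneous hit at a positive time would force $X$ to equal both $x^*_1$ and $x^*_2$ at that instant, contradicting $x^*_1<x^*_2$. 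The main (and only mildly delicate) obstacle is upgrading $M^1$ from local to true martingale up to $\hat\tau_2$, since $\hat\tau_2$ may be unbounded and $v_1$ need not be bounded near $\underline x$; this relies on the growth controls of Definition \ref{def:sets0} and Assumption \ref{ass:flessi}(iii) combined with the piecewise form of $v_1$ in terms of $\phi_r,\psi_r,G_1,L_1$.
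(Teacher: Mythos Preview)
Your proposal is correct and follows exactly the approach the paper indicates: the paper does not give a separate proof of this corollary but simply states that it ``yields'' from the application of It\^o's formula in Step~3 of the proof of Proposition~\ref{prop:values}. Your write-up fleshes out precisely that argument---It\^o--Tanaka on $e^{-rt}v_i(X_t)$, the sign information \eqref{freeb1}--\eqref{freeb3} for the drift, and the obstacle bound \eqref{freeb4}---together with the bookkeeping (terminal identity and $\PP_x(\hat\tau_1=\hat\tau_2)=0$) that the paper leaves implicit.
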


Our nonzero-sum game may have multiple Nash equilibria but we now provide sufficient conditions under which the equilibrium of Theorem \ref{thm:construction} is unique in the class \eqref{st-times}.
For this we will consider the auxiliary problem
\begin{align}
\label{eq:auxOS}
\inf_{\tau\in\cT}\EE_x\big[e^{-r\tau}G_2(X_\tau)\big],\qquad x\in\cI,
\end{align}
which corresponds to the optimal stopping problem for $P_2$ if $P_1$ decides never to stop. The proof of the next lemma is standard and we provide it in Appendix \ref{app:lem}. 
\begin{lemma}\label{lem:aux1}
There is a unique solution $y_2^\infty$ in $(\hat{y}_2,+\infty)$ of  
$\hat{G}'_2(y)y-\hat{G}_2(y)=0$. Setting $x_2^\infty:=F^{-1}_r(y_2^\infty)>\hat{x}_2$ and recalling 
\eqref{st-times0}, the stopping time $\tau^\infty_2:=\tau_2(x^\infty_2)$  
is an optimal stopping time for \eqref{eq:auxOS}.
\end{lemma}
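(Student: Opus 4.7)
The plan is two-fold: first establish existence and uniqueness of $y_2^\infty$ as the root of $\Phi(y) := \hat{G}'_2(y)y - \hat{G}_2(y)$ on $(\hat{y}_2, +\infty)$, and then verify by an It\^o-Tanaka argument that the associated candidate coincides with the value of \eqref{eq:auxOS}, attained at $\tau_2^\infty$. To locate the root, I would note that $\Phi'(y) = \hat{G}''_2(y)\,y$ is strictly positive on $(\hat{y}_2, +\infty)$ by Lemma \ref{lemm:concavity} (strict convexity of $\hat{G}_2$ past $\hat{y}_2$, or equivalently $g_2 > 0$ on $(\hat{x}_2, \overline{x})$ via Lemma \ref{lem:conv}), and reuse the integral identity from Step 1 of the proof of Theorem \ref{thm:construction}:
\begin{equation*}
\Phi(u) \,=\, \hat{G}'_2(u)u - \hat{G}_2(u) \,=\, W^{-1}\int_{\underline{x}}^{s}\psi_r(t)\,g_2(t)\,m'(t)\,dt, \qquad s = F_r^{-1}(u).
\end{equation*}
Taking $s = \hat{x}_2$ gives $\Phi(\hat{y}_2) < 0$ since $g_2 < 0$ on $(\underline{x}, \hat{x}_2)$; the same splitting argument that produced \eqref{eq:lim}, now applied to the integral above and combined with \eqref{psiphiproperties3}--\eqref{psiphiproperties2}, yields $\Phi(u) \to +\infty$ as $u \to +\infty$. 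Strict monotonicity and the intermediate value theorem then deliver the unique $y_2^\infty$, and $x_2^\infty := F_r^{-1}(y_2^\infty) \in (\hat{x}_2, \overline{x})$.

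I would next introduce the candidate
\begin{equation*}
v(x) :=
\begin{cases}
\dfrac{\psi_r(x)}{\psi_r(x_2^\infty)}\,G_2(x_2^\infty), & x < x_2^\infty,\\[6pt]
G_2(x), & x \ge x_2^\infty,
\end{cases}
\end{equation*}
whose transform reads $\hat{v}(y) = y\,\hat{G}_2(y_2^\infty)/y_2^\infty$ on $(0, y_2^\infty)$ and $\hat{v} = \hat{G}_2$ on $[y_2^\infty, +\infty)$. The defining equation for $y_2^\infty$ says precisely that the straight line $T(y) := y\,\hat{G}_2(y_2^\infty)/y_2^\infty$ is tangent to $\hat{G}_2$ at $y_2^\infty$ (a geometric smooth-fit), so $v \in C^1(\cI) \cap W^{2,\infty}_{loc}(\cI)$. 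The obstacle $v \le G_2$ reduces to $T \le \hat{G}_2$ on $(0, y_2^\infty)$: on $(\hat{y}_2, y_2^\infty)$ this follows from convexity of $\hat{G}_2$ and the tangency (which also yields $T(\hat{y}_2) \le \hat{G}_2(\hat{y}_2)$), and on $(0, \hat{y}_2)$ the function $\hat{G}_2 - T$ is concave with non-negative boundary values $\hat{G}_2(0+) - T(0) = 0$ and $\hat{G}_2(\hat{y}_2) - T(\hat{y}_2) \ge 0$, hence non-negative throughout.

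By construction, $(\LL_X - r)v = 0$ on $(\underline{x}, x_2^\infty)$ (as $v$ is proportional to $\psi_r$ there) and $(\LL_X - r)v = g_2 > 0$ on $(x_2^\infty, \overline{x})$. An It\^o-Tanaka argument with localisation, mirroring Step 3 of the proof of Proposition \ref{prop:values}, then shows that $(e^{-rt}v(X_t))_{t \ge 0}$ is a submartingale, which combined with $v \le G_2$ yields $v(x) \le \EE_x[e^{-r\tau}G_2(X_\tau)]$ for every $\tau \in \cT$; equality at $\tau = \tau_2^\infty$ follows from \eqref{eqn:lapl} and the continuity of $X$ (so that $X_{\tau_2^\infty} = x_2^\infty$ on $\{\tau_2^\infty < \infty\}$ when starting below $x_2^\infty$), together with the convention \eqref{limit}. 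I expect the main technical obstacle to lie in passing to the limit in the localising sequence: this requires uniform integrability of $\{e^{-r(\tau \wedge T_n)}v(X_{\tau \wedge T_n})\}_n$, supplied by $v \le G_2$, the submartingale property, and the integrability condition \eqref{lims2} built into the class $\cA$.
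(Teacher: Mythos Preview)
Your proposal is correct and follows essentially the same approach as the paper: the paper establishes existence/uniqueness of $y_2^\infty$ by the same monotonicity/sign-change argument for $\Phi(y)=\hat G'_2(y)y-\hat G_2(y)$ (using strict concavity of $\hat G_2$ on $(0,\hat y_2)$ directly for $\Phi(\hat y_2)<0$ rather than the integral identity, and invoking \eqref{eq:lim} for the limit at infinity), and then obtains optimality of $\tau_2^\infty$ by deferring to Proposition~\ref{prop:app-OS} in Appendix~\ref{app:firstOS}, whose content is precisely the candidate construction and It\^o--Tanaka verification you spell out.
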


Let $y_1^\infty$ be the unique $y\in(0,\hat{y}_1)$ that solves $\cL_1(\,\cdot\,,y_2^\infty)=0$, whose existence we know from Step 2 in the proof of Theorem \ref{thm:construction}. Then the latter arguments also give us the next corollary.
\begin{coroll}\label{cor:y1}
Set $x_1^\infty:=F^{-1}_r(y_1^\infty)<\hat{x}_1$. Then $\tau^\infty_1:=\tau_1(x^\infty_1)$ (see \eqref{st-times0}) provides the best reply of $P_1$ when $P_2$ stops at $\tau^\infty_2$.
\end{coroll}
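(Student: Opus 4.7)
The plan is to apply Step 2 of the proof of Theorem \ref{thm:construction} directly, with the choice $z = x_2^\infty$. The point is that Step 2 constructs $P_1$'s best reply for \emph{any} choice by $P_2$ of a threshold stopping time $\tau_2(z)$ with $z\in(\hat{x}_2,\overline{x})$, so we only need to verify that $x_2^\infty$ lies in this range and identify the resulting threshold with $x_1^\infty$.

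First I would note that by Lemma \ref{lem:aux1} we have $x_2^\infty=F_r^{-1}(y_2^\infty)$ with $y_2^\infty\in(\hat{y}_2,+\infty)$, and thus $x_2^\infty\in(\hat{x}_2,\overline{x})$. Consequently $\tau_2^\infty = \tau_2(x_2^\infty)$ is precisely one of the admissible strategies $\tau_2(z)$ considered in Step 2 of the proof of Theorem \ref{thm:construction}. Given that $P_2$ commits to $\tau_2^\infty$, the best reply problem for $P_1$ becomes the optimal stopping problem \eqref{OS-P_1} with $z = x_2^\infty$, i.e.
\begin{equation*}
\inf_{\tau \in \mathcal{T}}\EE_x\Big[e^{-r\tau}G_1(X_{\tau})\mathds{1}_{\{\tau < \tau_2^\infty\}} + L_1(X_{\tau_2^\infty}) e^{-r\tau_2^\infty}\mathds{1}_{\{\tau \geq \tau_2^\infty\}}\Big].
\end{equation*}

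Setting $\zeta = F_r(x_2^\infty) = y_2^\infty$, Step 2 of Theorem \ref{thm:construction} establishes that $\cL_1(\,\cdot\,,\zeta)=0$ admits a unique solution in $(0,\hat{y}_1)$, and that the stopping time $\tau_1\big(F_r^{-1}(y_1(\zeta))\big)$ is optimal in the above problem. By the definition of $y_1^\infty$ this unique solution is exactly $y_1^\infty$, and hence $x_1^\infty = F_r^{-1}(y_1^\infty)$ and $\tau_1^\infty=\tau_1(x_1^\infty)$ is the best reply of $P_1$ to $\tau_2^\infty$.

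There is essentially no obstacle here: the corollary is a direct specialisation of the best-reply construction in Step 2 of Theorem \ref{thm:construction} to the particular choice $z=x_2^\infty$, combined with the existence and uniqueness of $y_2^\infty$ provided by Lemma \ref{lem:aux1} and of $y_1^\infty$ stated just before the corollary. The only mild point to flag is that formally \eqref{OS-P_1} and \eqref{OS-P_2} were stated under the requirement that the opponent's threshold lies in the relevant half-interval, and this is guaranteed precisely by the placement $y_2^\infty\in(\hat{y}_2,+\infty)$ coming from Lemma \ref{lem:aux1}.
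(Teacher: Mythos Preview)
Your proposal is correct and takes essentially the same approach as the paper: the paper simply states that the corollary follows from ``the latter arguments'' (i.e.\ Step 2 of the proof of Theorem \ref{thm:construction}) applied with $\zeta=y_2^\infty$, and you have spelled this out explicitly, including the verification that $x_2^\infty\in(\hat{x}_2,\overline{x})$ so that Step 2 indeed applies.
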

\noindent For future reference it is worth recalling that the optimal stopping problem for $P_1$ when $P_2$ stops at $\tau^\infty_2$ is
\begin{align}\label{eq:auxOS2}
\inf_{\tau\in\cT}\EE_x\Big[ e^{-r\tau}G_1(X_\tau)\mathds{1}_{\{\tau<\tau^\infty_2\}}+e^{-r\tau^\infty_2}L_1(X_{\tau_2^\infty})\mathds{1}_{\{\tau\ge\tau^\infty_2\}}\Big],\qquad x\in\cI.
\end{align}

Recalling $\widetilde{y}_i$, $i=1,2$, from Definition \ref{def:inflection} we are now ready to state our uniqueness result.
\begin{theorem}[\textbf{Uniqueness of the equilibrium}]\label{thm:unique}
Let Assumption \ref{ass:flessi} hold and let us also assume 
\begin{itemize}
\item[(i)] $L_i\in\cA_i$, $i=1,2$
\item[(ii)] $\widetilde{y}_2>\hat{y}_1$ 
\item[(iii)] $\hat{G}'_1(y^\infty_1)<\hat{L}'_1(y^\infty_2)$
\end{itemize}
Then problem \eqref{system-construction} has a unique solution. Writing $x^*_i=F^{-1}_r(y^*_i)$, $i=1,2$, then $(x^*_1,x^*_2)$ is the unique couple such that $(\tau_1(x^*_1),\tau_2(x^*_2))$ as in \eqref{st-times} constitutes a Nash equilibrium for the game.
\end{theorem}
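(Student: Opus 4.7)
By Corollary~\ref{cor:exist}, uniqueness of the threshold-type Nash equilibrium is equivalent to uniqueness of the solution of \eqref{system-construction} in the rectangle $R:=(0,\hat{y}_1)\times(\hat{y}_2,+\infty)$; existence has just been given by Theorem~\ref{thm:construction}. Steps 1 and 2 of the proof of that theorem already produce two continuous best-reply maps $\beta_2:(0,\hat{y}_1)\to(\hat{y}_2,+\infty)$ and $\beta_1:(\hat{y}_2,+\infty)\to(0,\hat{y}_1)$ implicitly defined by $\cL_2(\beta_2(y_1),y_1)=0$ and $\cL_1(\beta_1(y_2),y_2)=0$. Every solution of \eqref{system-construction} is therefore an intersection point in $R$ of the graphs $C_2=\{(y_1,\beta_2(y_1))\}$ and $C_1=\{(\beta_1(y_2),y_2)\}$, and it suffices to show these curves meet in at most one point. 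My strategy is to prove that $\beta_2$ is strictly decreasing on $(0,\hat{y}_1)$ while $\beta_1$ is strictly increasing on the relevant subinterval of $(\hat{y}_2,+\infty)$; then $C_2$ and $C_1$ are strictly monotone in opposite directions and a unique intersection is forced.

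The first step is strict monotonicity of $\beta_2$ from condition~(ii). Implicit differentiation of $\cL_2(\beta_2(y_1),y_1)=0$ gives
\[
\beta_2'(y_1)=\frac{\hat{G}_2'(\beta_2(y_1))-\hat{L}_2'(y_1)}{\hat{G}_2''(\beta_2(y_1))\,(\beta_2(y_1)-y_1)}.
\]
On $(\hat{y}_2,+\infty)$, Lemma~\ref{lemm:concavity} applied to $-G_2\in\cA_1$ yields $\hat{G}_2''>0$ and $\hat{G}_2'<0$; meanwhile $L_2\in\cA_2$ from~(i) together with $y_1<\hat{y}_1<\widetilde{y}_2$ from~(ii) makes $\hat{L}_2$ strictly increasing at $y_1$, so $\hat{L}_2'(y_1)>0$. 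Since $\beta_2(y_1)-y_1>0$, we obtain $\beta_2'(y_1)<0$. Passing to $y_1\downarrow 0$ in the defining identity and comparing with Lemma~\ref{lem:aux1} additionally gives $\beta_2(0+)=y_2^\infty$, so any candidate equilibrium satisfies $y_2^*\in(\hat{y}_2,y_2^\infty)$.

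The second step is strict monotonicity of $\beta_1$ on $(\hat{y}_2,y_2^\infty)$. The analogous implicit differentiation yields
\[
\beta_1'(y_2)=\frac{\hat{G}_1'(\beta_1(y_2))-\hat{L}_1'(y_2)}{\hat{G}_1''(\beta_1(y_2))\,(\beta_1(y_2)-y_2)},
\]
whose denominator is negative ($\hat{G}_1''>0$ on $(0,\hat{y}_1)$ by Lemma~\ref{lemm:concavity}, and $\beta_1(y_2)<y_2$), so $\beta_1'(y_2)>0$ is equivalent to $\hat{L}_1'(y_2)>\hat{G}_1'(\beta_1(y_2))$. At the endpoint $y_2=y_2^\infty$ one has $\beta_1(y_2^\infty)=y_1^\infty$ by Corollary~\ref{cor:y1}, and condition~(iii) is precisely $\hat{G}_1'(y_1^\infty)<\hat{L}_1'(y_2^\infty)$. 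I would then propagate this pointwise inequality to the whole interval $(\hat{y}_2,y_2^\infty)$, exploiting the convexity of $\hat{G}_1$ on $(0,\hat{y}_1)$, the sign pattern of $\hat{L}_1$ granted by $L_1\in\cA_1$ (i), and the geometric reading of $\cL_1(y_1,y_2)=0$ as the tangent line to $\hat{G}_1$ at $y_1$ passing through $(y_2,\hat{L}_1(y_2))$. With $\beta_1$ strictly increasing and $\beta_2$ strictly decreasing, uniqueness follows immediately: if $(y_1^*,y_2^*)\ne(\tilde{y}_1,\tilde{y}_2)$ solved \eqref{system-construction} with, say, $y_1^*<\tilde{y}_1$, Step~1 forces $y_2^*>\tilde{y}_2$ while Step~2 forces $y_2^*<\tilde{y}_2$, a contradiction.

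The hard part is the propagation argument in Step~2, since condition~(iii) is only a single pointwise assertion at $(y_1^\infty,y_2^\infty)$. I expect the cleanest way to carry it through is a direct contradiction on the level of equilibria: assuming two distinct solutions $(y_1^*,y_2^*)$ and $(\tilde{y}_1,\tilde{y}_2)$ of \eqref{system-construction} with $y_1^*<\tilde{y}_1$ (hence $y_2^*>\tilde{y}_2$ by Step~1), one subtracts $\cL_1(y_1^*,y_2^*)=\cL_1(\tilde{y}_1,\tilde{y}_2)=0$ and rewrites the difference via second-order Taylor formulae in $\hat{G}_1$ and the mean-value theorem in $\hat{L}_1$; the resulting identity, combined with $\hat{G}_1''>0$ on $(0,\hat{y}_1)$ and the monotonicity of $\hat{L}_1$ inherited from~(i) and Lemma~\ref{lemm:concavity}, should leave only condition~(iii) applied at $(y_1^\infty,y_2^\infty)$ as a possible escape, and that too is ruled out by reading $(y_1^\infty,y_2^\infty)$ as the limiting point on $C_1$ as $y_2\uparrow y_2^\infty$. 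Once this contradiction is achieved, uniqueness of $(y_1^*,y_2^*)$ follows, and the threshold pair $(x_1^*,x_2^*)=(F_r^{-1}(y_1^*),F_r^{-1}(y_2^*))$ is the unique Nash equilibrium in the class \eqref{st-times}.
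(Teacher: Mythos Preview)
Your overall strategy coincides with the paper's: establish that the best-reply maps $\beta_2$ and $\beta_1$ are strictly monotone in opposite directions, so that the system \eqref{system-construction} can have at most one solution. Your Step~1 (monotonicity of $\beta_2$) is correct and matches the paper's argument exactly, including the identification $\beta_2(0+)=y_2^\infty$.

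The gap is in Step~2. You correctly set up the implicit derivative of $\beta_1$, reduce the question to the sign of $U(\zeta):=\hat G_1'(\beta_1(\zeta))-\hat L_1'(\zeta)$, and note that condition~(iii) gives $U(y_2^\infty)<0$. But your ``propagation'' of this inequality to all of $(\hat y_2,y_2^\infty)$ is not carried out: you list the right ingredients (convexity of $\hat G_1$, the shape of $\hat L_1$ from $L_1\in\cA_1$, the tangent-line reading of $\cL_1=0$) without combining them, and then you abandon this route for a vaguer ``Taylor plus mean-value'' contradiction on two putative equilibria. That alternative is not fleshed out and there is no clear reason it would succeed without essentially re-deriving monotonicity of $\beta_1$.

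What the paper actually does is short and sharp, and it is worth knowing. Suppose $U$ vanishes at some $\zeta^o\in(\hat y_2,y_2^\infty)$. The condition $\cL_1(\beta_1(\zeta^o),\zeta^o)=0$ together with $U(\zeta^o)=0$ says that a single straight line is tangent to $\hat G_1$ at $\beta_1(\zeta^o)$ and to $\hat L_1$ at $\zeta^o$. If $\zeta^o\le\check y_1$ then $\hat L_1$ is convex at $\zeta^o$, so this tangent lies below $\hat L_1$ on $(0,\zeta^o)$; but it meets $\hat G_1$ at $\beta_1(\zeta^o)<\zeta^o$, contradicting $\hat L_1<\hat G_1$. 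Hence necessarily $\zeta^o>\check y_1$. Now differentiate $U$ and use $\beta_1'(\zeta^o)=0$ to get $U'(\zeta^o)=-\hat L_1''(\zeta^o)>0$, since $\hat L_1$ is strictly concave beyond $\check y_1$. Thus $U$ can only cross zero with strictly positive slope, so it has at most one zero, and if it did have one then $U>0$ on $(\zeta^o,y_2^\infty)$, contradicting $U(y_2^\infty)<0$ from~(iii). Therefore $U<0$ throughout, $\beta_1$ is strictly increasing on $(\hat y_2,y_2^\infty)$, and uniqueness follows. This is the missing ``propagation'' step; your sketch had all the pieces but not this assembly.
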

\begin{proof}
The main idea of the proof is to show that the functions $\zeta\mapsto y_i(\zeta)$ for $i=1,2$ found in the proof of Theorem \ref{thm:construction} are monotonic, respectively increasing and decreasing, so that they intersect at most once and \eqref{eq:fpt1}-\eqref{eq:fpt2} has a unique solution. Uniqueness of the equilibrium (in the class  \eqref{st-times}) then follows by Corollary \ref{cor:exist}. We adopt the notation of Theorem \ref{thm:construction} and observe that under the additional regularity assumptions on $L_i$ the implicit function theorem implies that $y_i(\,\cdot\,)\in C^1(\cO_i)$ with $i=1,2$ and $\cO_1:=(\hat{y}_2,+\infty)$, $\cO_2:=(0,\hat{y}_1)$. In fact denoting by $\partial_k\cL_i$ the partial derivative of $\cL_i$ with respect to the $k$-th variable $k=1,2$, the implicit function theorem gives
\begin{align}\label{impl}
y'_i(\zeta)=-\frac{\partial_2\cL_i}{\partial_1\cL_i}(y_i(\zeta),\zeta)=\frac{\hat{G}'_i(y_i(\zeta))-\hat{L}'_i(\zeta)}{\hat{G}''_i(y_i(\zeta))
(y_i(\zeta)-\zeta)},\quad \zeta\in\cO_i,\:i=1,2.
\end{align}
\vspace{+4pt}

\noindent\emph{Step 1 ($y_2$ is monotonic decreasing)}. First we want to prove that $y_2(\,\cdot\,)$ decreases monotonically on $\cO_2$. 
For $\zeta\in\cO_2$ it holds $y_2(\zeta)>\hat{y}_2>\zeta$. Hence by Lemma \ref{lemm:concavity}-$(i)$ we have $\hat{G}''_2(y_2(\zeta))(y_2(\zeta)-\zeta)>0$  because $\hat{G}_2$ is convex, and $\hat{G}'_2(y_2(\zeta))<0$ by Lemma \ref{lemm:concavity}-$(iii)$. By assumption $(ii)$ we also have $\hat{L}'_2(\zeta)>0$ for $\zeta\in\cO_2$, since $\hat{L}_2$ is increasing on $(0,\widetilde{y}_2)$ by Lemma \ref{lemm:concavity}. Therefore we have from \eqref{impl} that $y_2$ is decreasing on $\cO_2$ as claimed. 
From Lemma \ref{lem:aux1} we find the maximum value $y_2(0+)=y_2^\infty$. 

Since the optimal boundaries constructed in Theorem \ref{thm:construction} have the fixed point property that $y^*_1=y_1(y_2(y^*_1))$ (cf.~\eqref{eq:fpt1}--\eqref{eq:fpt2}), it is sufficient to show monotonicity of $\zeta\mapsto y_1(\zeta)$ on the interval $\zeta\in(\hat{y}_2, y^\infty_2)$, which contains the range of $y_2(\,\cdot\,)$. This is done in the next step.
\vspace{+4pt}

\noindent\emph{Step 2 ($y_1$ is monotonic increasing)}.
Taking $\zeta\in\cO_1$ we have $y_1(\zeta)\in\cO_2$ and by $(i)$ of Lemma \ref{lemm:concavity} we have $\hat{G}''_1(y_1(\zeta))(y_1(\zeta)-\zeta)<0$. 
Corollary \ref{cor:y1} and Step 2 in the proof of Theorem \ref{thm:construction} justify setting $y^\infty_1=y_1(y^\infty_2)$. Hence we can write $\hat{G}'_1(y_1(y^\infty_2))= \hat{G}'_1(y^\infty_1)$ and since we are assuming $\hat{G}'_1(y^\infty_1)<\hat{L}'_1(y^\infty_2)$, then
\begin{align}\label{pos}
y'_1(y^\infty_2)>0.
\end{align}

Let us now study the sign of the function $U:(\hat{y}_2,y^\infty_2) \to \R$,
where $U(\zeta):= \hat{G}'_1(y_1(\zeta))-\hat{L}'_1(\zeta)$. 
Assume that $U$ has a zero at $\zeta^o_1$ or, equivalently, that $y'_1(\zeta^o_1)=0$. Then since $\cL_1(y_1(\zeta^o_1),\zeta^o_1)=0$ (cf. \eqref{eq:ell}) and $U(\zeta^o_1)=0$, there is a straight line which is tangent both to $\hat{L}_1$, at $\zeta^o_1$, and to $\hat{G}_1$, at $y_1(\zeta^o_1)$. Since $\hat{L}_1$ is convex for $y<\check{y}_1$
and $\hat{L}_1<\hat{G}_1$ it is easy to see that we must have $\zeta^o_1>\check{y}_1$, otherwise the tangent would lie below $\hat{L}_1$ on $(0,\zeta^o_1)$ and violate $\hat{L}_1<\hat{G}_1$ at $y_1(\zeta^o_1)$. 

Now we claim that if such $\zeta^o_1$ exists, then 
\begin{align}
\label{monot01}
&U>0\:\:\:\text{and}\:\:\:y'_1<0\quad \text{on $(\zeta^o_1, y^\infty_2)$}.
\end{align}
Since the latter inequality would contradict \eqref{pos}, it would then follow that $y'_1(\cdot)$ must be strictly positive on $(\hat{y}_2,y^\infty_2)$.

Hence, to conclude it remains to prove \eqref{monot01}. For this we use $y'_1(\zeta^o_1)=0$ and observe that  
\begin{equation}
\label{sign-uniq}
U'(\zeta^o_1) = \Big(\hat{G}^{''}_1(y_1(\zeta))y^{'}_1(\zeta)-\hat{L}^{''}_1(\zeta)\Big)\Big|_{\zeta=\zeta^o_1} = -\hat{L}^{''}_1(\zeta^o_1) > 0,
\end{equation}
since $\zeta^o_1>\check{y}_1$.  Hence $U$ may only equal zero with strictly positive derivative, so it has at most one zero $\zeta^o_1$ and then \eqref{monot01} holds. 
\vspace{+4pt}

\noindent\emph{Step 3 (Uniqueness of the solution to \eqref{system-construction})}. 
From Theorem \ref{thm:construction} we know that there exists $(y_1^*, y_2^*) \in \cO_2 \times \cO_1$ such that $y_2^*=y_2(y_1^*)$ and $y_1^*=y_1(y_2^*)$. The monotonicity of both $y_1$ and $y_2$ obtained above implies that this pair is unique in $\cO_2 \times \cO_1$, and hence there is a unique solution to \eqref{system-construction}.
\end{proof}


\subsection{The case of an entrance-not-exit boundary}
\label{entrance}

In this section we extend the methodology developed above to the case when $\underline{x}$ is an entrance-not-exit boundary and $\overline{x}$ is a natural boundary for $X$. This setting includes for example certain CIR and Bessel processes 
(see for instance \cite{JYC}). For the fundamental solutions $\phi_r$ and $\psi_r$ we have that \eqref{psiphiproperties1}, \eqref{psiphiproperties2}, \eqref{psiphiproperties3} continue to hold if we replace (see \cite{BS}, Sec.~2, par.~10)
\begin{align}
\label{newpsi}& \lim_{x\downarrow \underline{x}}\psi_r(x)=0\:\:\:\text{by}\:\:\:\lim_{x\downarrow \underline{x}}\psi_r(x)>0,\\
\label{newphi}& \lim_{x\downarrow \underline{x}}\frac{\phi'_r(x)}{S'(x)}=-\infty\:\:\:\text{by}\:\:\:\lim_{x\downarrow \underline{x}}\frac{\phi'_r(x)}{S'(x)}>-\infty.
\end{align}

This setting is adopted in the remainder of this section. 
We first examine the geometric properties associated with the classes $\cA_1$, $\cA_2$, as was done previously in Lemma \ref{lemm:concavity}, under the new boundary behaviour for $\underline x$. The 
asymptotic behaviour of $\hat H$ as $y\uparrow +\infty$ is 
exactly the same 
as in Lemma \ref{lemm:concavity} since the upper endpoint of $\cI$ is again natural. Notice as well that $\hat H(0+)=0$ by definition of $\cA_i$ and that $\hat H'(0+)$ always exists by convexity or concavity. Compared to Lemma \ref{lemm:concavity}, the difference in the present setting is that functions in $\cA_1$ and $\cA_2$ may now have finite derivative at zero with either negative or positive sign.

\begin{lemma}
\label{lemm:concavity2}
\ 
\begin{itemize}
	\item[  i)] If $H\in\cA_1$ then $\hat{H}$ is convex on $(0, \hat{y}_h)$ and concave on $(\hat{y}_h, \infty)$. Moreover $\hat{H}(0+)=0$, $\lim_{y\to\infty}\hat{H}(y)=+\infty$ and $H$ is monotonic increasing on $(\hat{y}_h,+\infty)$. 
	
In addition if $\hat{H}'(0+)<0$ then $\hat{H}$ has a unique global minimum in $(0, \hat{y}_h)$.

	\item[ii)] If $H\in\cA_2$ then $\hat{H}$ is concave on $(0, \hat{y}_h)$ and convex on $(\hat{y}_h, \infty)$. Moreover $\hat{H}(0+)=0$, $\lim_{y\to\infty}\hat{H}(y)=-\infty$ and $\hat{H}$ is monotonic decreasing on $(\hat{y}_h,+\infty)$. 
	
In addition if $\hat{H}'(0+)> 0$ then $\hat{H}$ has a unique global maximum in $(0, \hat{y}_h)$.  
\end{itemize}
\end{lemma}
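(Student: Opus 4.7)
The plan is to mirror the proof of Lemma~\ref{lemm:concavity}, using that the boundary behaviour of $\psi_r,\phi_r$ at the (still natural) upper endpoint $\overline x$ is unchanged, so the properties of $\hat H$ near $+\infty$ carry over without modification; only the behaviour near $0$ needs to be re-examined. For part~(i) with $H\in\cA_1$, the convexity and concavity statements will be immediate from Lemma~\ref{lem:conv}, since $h$ is positive on $(\underline x,x_h)$ and negative on $(x_h,\overline x)$, while $\hat H(0+)=0$ is the content of the first limit in~\eqref{lims} combined with~\eqref{def:hat}.

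For $\lim_{y\to\infty}\hat H(y)=+\infty$, I would essentially repeat the calculation from Step~1 of the proof of Theorem~\ref{thm:construction}, with $H$ in place of $G_2$: the identity $\hat H(u)-u\hat H'(u)=-W^{-1}\int_{\underline x}^{F_r^{-1}(u)}\psi_r\,h\,m'\,dt$ derived from~\eqref{resolvent2}, combined with $\limsup_{x\uparrow\overline x}h<0$, the bound~\eqref{psiphiproperties3} and the limit $\psi'_r/S'\to+\infty$ at $\overline x$ (which is preserved in the present setting), forces $\hat H(u)-u\hat H'(u)\to+\infty$. Independently I would use \eqref{res3} together with the integrability in~\eqref{lims2} to show that $\hat H'(u)\to 0$ as $u\to\infty$. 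Concavity of $\hat H$ on $(\hat y_h,+\infty)$ then forces $\hat H'\ge 0$ on this interval, giving $u\hat H'(u)\ge 0$ and therefore $\hat H(u)\ge \hat H(u)-u\hat H'(u)\to+\infty$. Monotonicity of $\hat H$ on $(\hat y_h,+\infty)$ comes as a byproduct.

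Under the additional assumption $\hat H'(0+)<0$, strict convexity on $(0,\hat y_h)$ makes $\hat H'$ strictly increasing there; moreover $\hat H'(\hat y_h)$ must be strictly positive, for otherwise concavity to the right would give $\hat H'\le 0$ throughout $(\hat y_h,+\infty)$, contradicting $\hat H\to+\infty$. The intermediate value theorem then produces a unique zero of $\hat H'$ in $(0,\hat y_h)$, which is the unique global minimum by convexity and the increasing behaviour beyond $\hat y_h$. Part~(ii) follows at once from (i) applied to $-H\in\cA_1$. I expect the main obstacle to be the implication $\hat H(u)-u\hat H'(u)\to+\infty\Rightarrow\hat H(u)\to+\infty$: the divergence of the tangent-line intercept alone is not sufficient, and the auxiliary limit $\hat H'(u)\to 0$ together with the sign constraint imposed by concavity is what closes the gap.
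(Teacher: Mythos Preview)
Your proposal is correct. The paper does not give an explicit proof of Lemma~\ref{lemm:concavity2}; it simply remarks in the paragraph preceding the lemma that the behaviour of $\hat H$ as $y\uparrow+\infty$ is unchanged (since $\overline x$ is still natural), that $\hat H(0+)=0$ by definition of $\cA$, and that the only new feature is the possible finiteness of $\hat H'(0+)$. So in spirit your plan and the paper's coincide: reuse Lemma~\ref{lemm:concavity} for everything away from $y=0$, and adjust only at the lower endpoint.

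The one place where you deviate is in establishing $\hat H(y)\to+\infty$. The paper's proof of Lemma~\ref{lemm:concavity} does this \emph{directly}, by bounding $H(x)/\phi_r(x)$ from below via the representation~\eqref{resolvent2} (see \eqref{limit1}--\eqref{limit4}) and invoking $\phi_r(\overline x-)=0$; this argument depends only on the upper boundary and so transfers verbatim. Your route is more indirect: you show the tangent intercept $\hat H(u)-u\hat H'(u)\to+\infty$ (as in Step~1 of Theorem~\ref{thm:construction}), separately establish $\hat H'(u)\to 0$ from~\eqref{res3} and~\eqref{lims2}, and then use concavity to force $\hat H'\ge 0$ on $(\hat y_h,+\infty)$, whence $\hat H(u)\ge \hat H(u)-u\hat H'(u)\to+\infty$. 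This is valid and has the merit of yielding monotonicity and the limit $\hat H'(\infty)=0$ as byproducts, at the cost of a slightly longer chain of implications. The paper's direct estimate is shorter if all you need is the divergence of $\hat H$.
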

\noindent Notice that by $(ii)$ of Lemma \ref{lemm:concavity2}, Lemma \ref{lem:aux1} continues to hold. 
We now examine the effect of the modified geometry on the remaining results.

In the case that $\hat{G}'_1(0+)=-\infty$, all geometrical considerations are identical to those of the setting of Section \ref{Nash-natural} and so the proof of the next result is the same as that of Theorem \ref{thm:construction}.
\begin{prop}
If $\hat{G}'_1(0+)=-\infty$ then Theorem \ref{thm:construction} continues to hold when $\underline{x}$ is an entrance-not-exit boundary. 
\end{prop}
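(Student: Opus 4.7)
The plan is to trace through the three steps of the proof of Theorem \ref{thm:construction} and check, with Lemma \ref{lemm:concavity2} in place of Lemma \ref{lemm:concavity}, that each argument still goes through under the entrance-not-exit boundary behaviour at $\underline{x}$. The hypothesis $\hat G_1'(0+)=-\infty$ will only be needed at one specific point (Step 2), which is precisely where the natural-boundary proof used Lemma \ref{lemm:concavity}(ii) implicitly; everywhere else, the proof transfers almost verbatim because the upper endpoint $\overline{x}$ remains natural.

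First I would observe that since $\overline x$ is natural, the limits in \eqref{psiphiproperties1} and \eqref{psiphiproperties2} at $\overline x$ continue to hold, and so does \eqref{psiphiproperties3}. Consequently the asymptotic argument used to prove $\cL_2(u,\zeta)\to-\infty$ as $u\uparrow+\infty$ (culminating in \eqref{eq:lim}) is unaffected. For Step 1 (the construction of $P_2$'s best reply) the required concavity of $\hat G_2$ on $(0,\hat y_2)$ and convexity on $(\hat y_2,+\infty)$ are supplied by Lemma \ref{lemm:concavity2}(ii), so inequalities \eqref{pos1}--\eqref{pos2} still give $\cL_2(\hat y_2,\zeta)>0$, while $\partial_u\cL_2(u,\zeta)=-\hat G_2''(u)(u-\zeta)<0$ still holds for $u>\hat y_2>\zeta$. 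Boundedness of $\hat L_2$ on $(0,\hat y_1)$ follows from Assumption \ref{ass:flessi}(iii) together with \eqref{newpsi}, exactly as in the natural case. The optimality of $\tau_2(x_2(z))$ for problem \eqref{OS-P_2} is then inherited from Proposition \ref{prop:app-OS}, and continuity of $z\mapsto x_2(z)$ from the implicit function theorem.

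For Step 2 (the construction of $P_1$'s best reply) the hypothesis $\hat G_1'(0+)=-\infty$ plays exactly the role that Lemma \ref{lemm:concavity}(ii) played in the original proof: it guarantees that any solution of $\cL_1(\cdot,\zeta)=0$ in $[0,\hat y_1)$ must be strictly positive, so that we may apply the same geometric / smooth-fit argument as in Step 1 (with the roles of minimum/maximum and convex/concave swapped by Lemma \ref{lemm:concavity2}(i)) to obtain a unique $y_1(\zeta)\in(0,\hat y_1)$ solving \eqref{smf01}. The symmetric counterparts of \eqref{pos1}--\eqref{eq:lim} for $\cL_1$ use only the behaviour of $\hat G_1$ on $(0,\hat y_1)$ and the limit $\hat G_1(y)\to+\infty$ as $y\to+\infty$ (again ensured by $\overline x$ natural, via Lemma \ref{lemm:concavity2}(i)), so they transfer without change. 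The map $z\mapsto x_1(z)$ is continuous for the same reason as before.

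Step 3 (the fixed-point construction) is then purely abstract: from continuity of $y_1$ and $y_2$, from $\cL_2(\hat y_2,y_1(\hat y_2))>0$ (re-deriving \eqref{pos1}--\eqref{pos2} with $\zeta:=y_1(\hat y_2)\in(0,\hat y_1)$), and from $\cL_2(u,y_1(u))\to-\infty$ as $u\uparrow+\infty$, an intermediate value argument produces $y_2^*\in(\hat y_2,+\infty)$ with $\cL_2(y_2^*,y_1(y_2^*))=0$; setting $y_1^*:=y_1(y_2^*)$ gives a solution of \eqref{eq:fpt1}--\eqref{eq:fpt2}. The pair $(\tau_1(x_1^*),\tau_2(x_2^*))$ with $x_i^*:=F_r^{-1}(y_i^*)$ is then a Nash equilibrium, exactly as in Theorem \ref{thm:construction}. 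The only genuine obstacle in this extension is the possible finiteness of $\hat G_1'(0+)$, which would break the existence of an interior $y_1(\zeta)>0$ in Step 2 and correspond to equilibria in which $P_1$ never stops; this case is excluded by the standing assumption of the proposition and is presumably treated separately (for example via the alternative condition (d) discussed in Section \ref{relax}).
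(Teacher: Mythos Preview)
Your proposal is correct and follows exactly the approach the paper takes: its own proof is the single observation that under the hypothesis $\hat G_1'(0+)=-\infty$ all geometrical considerations are identical to the natural-boundary case, so the proof of Theorem \ref{thm:construction} carries over unchanged. Your step-by-step trace is a faithful (if more explicit) verification of this; the only minor slip is in Step 2, where the relevant limiting ingredient for the symmetric counterpart of \eqref{eq:lim} is $\hat G_1'(0+)=-\infty$ (which forces $\cL_1(u,\zeta)\to-\infty$ as $u\downarrow 0$), not the behaviour of $\hat G_1$ at $+\infty$, but this does not affect the validity of the argument.
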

Next we analyse cases in which $\hat{G}'_1(0+)\in(-\infty,0)$. Firstly we establish the existence of equilibria having a degenerate version of the threshold type structure \eqref{st-times}, in the sense that we formally take $x_1^*=\underline x$.
\begin{prop}\label{prop:inf}
Let Assumption \ref{ass:flessi} hold, let $\hat{G}'_1(0+)<0$ and recall $\tau^\infty_2:=\tau_2(x^\infty_2)$ from Lemma \ref{lem:aux1}. Then $(+\infty,\tau^\infty_2)$ is a Nash equilibrium if and only if
\begin{align}\label{ineq}
\frac{\hat{L}_1(y^\infty_2)}{y^\infty_2}\le\,\hat{G}'_1(0+) 
\end{align}
with $y^\infty_2=F_r(x^\infty_2)$.
\end{prop}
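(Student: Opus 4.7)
The strategy $\tau_1=+\infty$ collapses $P_2$'s problem \eqref{functional1} (in view of the convention \eqref{limit}) to the auxiliary problem \eqref{eq:auxOS}, whose unique best reply is $\tau_2^\infty$ by Lemma \ref{lem:aux1}. Hence $(+\infty,\tau_2^\infty)$ forms a Nash equilibrium if and only if $\tau=+\infty$ attains the infimum of $P_1$'s best-reply problem \eqref{eq:auxOS2}. Setting
\begin{equation*}
V(x):=\cJ_1(+\infty,\tau_2^\infty;x)=\EE_x\bigl[e^{-r\tau_2^\infty}L_1(X_{\tau_2^\infty})\bigr],
\end{equation*}
the Laplace-transform identity \eqref{eqn:lapl} and the definition of $\hat L_1$ give $V(x)=(\hat L_1(y_2^\infty)/y_2^\infty)\,\psi_r(x)$ for $\underline x<x\le x_2^\infty$ and $V(x)=L_1(x)$ for $x\ge x_2^\infty$. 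In the $y$-coordinate this reads $\hat V(y)=(\hat L_1(y_2^\infty)/y_2^\infty)\,y$ on $[0,y_2^\infty]$, a line through the origin whose slope is precisely the left-hand side of \eqref{ineq}.

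I would then establish the intermediate claim that $\tau=+\infty$ is $P_1$-optimal if and only if $\hat V\le \hat G_1$ on $[0,y_2^\infty]$. Necessity is immediate: were $\hat V(y_0)>\hat G_1(y_0)$ at some $y_0\in(0,y_2^\infty)$, then starting at $x_0:=F_r^{-1}(y_0)$ the stopping time $\tau=0$ would yield $G_1(x_0)<V(x_0)$, ruling out $+\infty$. For sufficiency, since $V$ is a constant multiple of $\psi_r$ the process $e^{-rt}V(X_t)$ is a $\PP_x$-martingale; optional sampling at $\rho\wedge\tau_2^\infty$ for arbitrary $\rho\in\cT$, combined with $V\le G_1$ on $[\underline x,x_2^\infty)$ and $V(x_2^\infty)=L_1(x_2^\infty)$, gives
\begin{equation*}
V(x)=\EE_x\bigl[e^{-r(\rho\wedge\tau_2^\infty)}V(X_{\rho\wedge\tau_2^\infty})\bigr]\le \cJ_1(\rho,\tau_2^\infty;x),
\end{equation*}
so $V$ attains the infimum in \eqref{eq:auxOS2}.

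It remains to verify that $\hat V\le \hat G_1$ on $[0,y_2^\infty]$ is equivalent to \eqref{ineq}. Both functions vanish at $0$; if \eqref{ineq} fails then the slope of $\hat V$ strictly exceeds $\hat G_1'(0+)$ (finite by hypothesis), whence $\hat V>\hat G_1$ on a right neighbourhood of the origin. Conversely, under \eqref{ineq}, on $[0,\hat y_1)$ the convexity of $\hat G_1$ from Lemma \ref{lemm:concavity2}(i) yields $\hat G_1(y)\ge \hat G_1'(0+)\,y\ge \hat V(y)$. On $[\hat y_1,y_2^\infty]$, $\hat G_1$ is concave and dominates $\hat V$ at both endpoints (at $y_2^\infty$ because $\hat L_1(y_2^\infty)<\hat G_1(y_2^\infty)$ by Assumption \ref{ass:flessi}(i)), so the affine $\hat V$ lies below the chord of $\hat G_1$ on that interval, and concavity delivers the inequality throughout.

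The main obstacle is the verification step in paragraph two: one must ensure that optional sampling of $e^{-rt}V(X_t)$ at $\rho\wedge\tau_2^\infty$ is justified for every $\rho\in\cT$, including on the event $\{\tau_2^\infty=+\infty\}$ where convention \eqref{limit} applies. This should follow by standard localisation together with the boundedness of $\psi_r$ on $[\underline x,x_2^\infty]$, which is continuous with a positive finite limit at $\underline x$ by \eqref{newpsi}.
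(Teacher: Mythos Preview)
Your proposal is correct and follows essentially the same route as the paper: reduce the Nash condition to optimality of $\tau=+\infty$ in $P_1$'s best-reply problem \eqref{eq:auxOS2}, identify the candidate value $\hat V$ as the straight line through the origin and $(y_2^\infty,\hat L_1(y_2^\infty))$, and show that this line lies below $\hat G_1$ on $[0,y_2^\infty]$ precisely when \eqref{ineq} holds. The paper compresses the verification step by invoking the geometric characterisation of Appendix \ref{2usefulOS} (Proposition \ref{prop:app-OS-bis}), whereas you carry out the martingale/optional-sampling argument directly; your explicit convexity/concavity argument for the equivalence with \eqref{ineq} also unpacks what the paper leaves implicit. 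One small point: you do not need uniqueness of $\tau_2^\infty$ as a best reply to $+\infty$---Lemma \ref{lem:aux1} only asserts optimality, and that suffices for the ``if and only if'' reduction you state.
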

\begin{proof}
\noindent \emph{Step 1 (Sufficiency)}. Suppose \eqref{ineq} holds and let $P_2$ choose the stopping time $\tau^\infty_2$ which is optimal in problem \eqref{eq:auxOS}, so that $P_1$ is faced with solving \eqref{eq:auxOS2}. Due to condition \eqref{ineq}, the largest convex function $W_1$ dominated by $\hat{G}_1$ on $[0,y^\infty_2]$ such that $W_1(y)=\hat{L}_1(y)$, for $y \geq y^\infty_2$, describes the value function (see details in Appendix \ref{2usefulOS}). This $W_1$ is given by the straight line starting from the origin and passing through $(y^\infty_2,\hat{L}_1(y^\infty_2))$. Therefore due to strict convexity of $\hat{G}_1$ at zero, $P_1$'s best reply to $\tau^\infty_2$ is the stopping time $\tau_1(\underline{x})=\inf\{t\ge0\,:\,X_t=\underline{x}\}=+\infty$ a.s.~(since the entrance-not-exit boundary $\underline{x}$ is unattainable in finite time). Since $\tau^\infty_2$ is also $P_2$'s best reply to $\tau_1(\underline{x})$ we have a Nash equilibrium.
\vspace{+4pt}

\noindent \emph{Step 2 (Necessity)}. We show necessity by contradiction. Suppose that $(+\infty,\tau^\infty_2)$ is a Nash equilibrium and that \eqref{ineq} does not hold. 

Let $P_2$ choose the stopping time $\tau^\infty_2$ so that $P_1$ must solve \eqref{eq:auxOS2}. Since \eqref{ineq} does not hold it is not possible to draw a straight line joining the origin to $(y^\infty_2, \hat{L}_1(y^\infty_2))$ and lying below $\hat{G}_1$ on $(0,y^\infty_2)$. This line would be $P_1$'s payoff for never stopping, therefore $\tau_1=+\infty$ cannot be a best reply.
\end{proof}

The above proposition shows that the construction of Theorem \ref{thm:construction} may break down in some cases, due to the geometry of $\hat G_1$. Hence in our present setting establishing the {\em existence} of an equilibrium requires different conditions on the cost functions, such as those in the next proposition (and indeed there may be cases where no equilibrium can be found in our class of strategies \eqref{st-times}). 

\begin{prop}\label{prop:L1hatlim}
Let
\begin{align}\label{ineq2}
-\infty<\hat{G}'_1(0+)< \frac{\hat{L}_1(y^\infty_2)}{y^\infty_2}<0
\end{align}
and assume $\lim_{y\to\infty}\hat{L}_1(y)>-\infty$ and $y_T\le \hat{y}_2$, where
\begin{align}\label{yT}
y_T:=\sup\{y>0\,:\,\hat{G}_1'(0+)y=\hat{L}_1(y)\},\quad \text{with} \quad \sup\{\emptyset\} = 0,
\end{align}
then Theorem \ref{thm:construction} continues to hold when $\underline{x}$ is an entrance-not-exit boundary.
\end{prop}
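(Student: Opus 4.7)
The plan is to revisit the three-step proof of Theorem \ref{thm:construction}, with the only substantive modification occurring in Step 2. Step 1 (construction of $P_2$'s best reply $y_2(\zeta)$ for $\zeta\in(0,\hat{y}_1)$) carries over verbatim, since the convexity/concavity and monotonicity properties of $\hat{G}_2$ required in \eqref{pos1}--\eqref{eq:lim} are unchanged by Lemma \ref{lemm:concavity2}-$(ii)$ and the upper boundary $\overline{x}$ is still natural (so \eqref{psiphiproperties1}--\eqref{psiphiproperties2} hold as before). Similarly Step 3 (fixed point argument) needs only continuity of $y_1$ and $y_2$, together with the sign change of $\zeta\mapsto\cL_2(\zeta,y_1(\zeta))$ as $\zeta$ traverses $(\hat{y}_2,+\infty)$; those arguments are unaffected by the change of boundary behaviour at $\underline{x}$.

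For Step 2, I will show that for every $\zeta\in(\hat{y}_2,+\infty)$ the equation $\cL_1(\cdot,\zeta)=0$ admits a unique root $y_1(\zeta)\in(0,\hat{y}_1)$. Passing to the limit $u\to 0^+$ and using $\hat{G}_1(0+)=0$ gives
\[
\cL_1(0+,\zeta)=\hat{G}_1'(0+)\,\zeta-\hat{L}_1(\zeta).
\]
Since $\hat{G}_1'(0+)<0$ and $\lim_{y\to\infty}\hat{L}_1(y)>-\infty$, the function $y\mapsto \hat{L}_1(y)-\hat{G}_1'(0+)y$ tends to $+\infty$, so by continuity and the definition of $y_T$ in \eqref{yT} as its largest root, it is strictly positive on $(y_T,+\infty)$. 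The hypothesis $y_T\le\hat{y}_2$ therefore yields $\cL_1(0+,\zeta)<0$ for every $\zeta\in(\hat{y}_2,+\infty)$. At the right endpoint, the concavity of $\hat{G}_1$ on $(\hat{y}_1,+\infty)$ from Lemma \ref{lemm:concavity2}-$(i)$ gives $\hat{G}_1(\hat{y}_1)+\hat{G}_1'(\hat{y}_1)(\zeta-\hat{y}_1)\ge\hat{G}_1(\zeta)>\hat{L}_1(\zeta)$, hence $\cL_1(\hat{y}_1,\zeta)>0$. Strict monotonicity $\partial_u\cL_1(u,\zeta)=-\hat{G}_1''(u)(u-\zeta)>0$ on $(0,\hat{y}_1)$ (by convexity of $\hat{G}_1$ and $u<\hat{y}_1<\zeta$), combined with the intermediate value theorem, produces a unique root $y_1(\zeta)\in(0,\hat{y}_1)$ depending continuously on $\zeta$.

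I would then verify that the stopping time $\tau_1(x_1(\zeta))$ with $x_1(\zeta):=F_r^{-1}(y_1(\zeta))$ is indeed $P_1$'s best reply to $\tau_2(z)$ via the geometric characterisation underlying Appendix \ref{app:secondOS}: the largest convex minorant $W_1$ of $\hat{G}_1$ on $[0,\zeta]$ pinned at $\hat{L}_1(\zeta)$ coincides with $\hat{G}_1$ on $[0,y_1(\zeta)]$ and with the tangent line to $\hat{G}_1$ at $y_1(\zeta)$ on $[y_1(\zeta),\zeta]$. This tangent lies below $\hat{G}_1$ on $[0,\hat{y}_1]$ by convexity, and on $[\hat{y}_1,\zeta]$ by concavity of $\hat{G}_1$ together with the chord inequality applied to the concave function $\hat{G}_1-T_{y_1(\zeta)}$, whose endpoint values are non-negative. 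The role of condition \eqref{ineq2} is to rule out the degenerate ``never-stop'' alternative of Proposition \ref{prop:inf}: the strict inequality $\hat{G}_1'(0+)<\hat{L}_1(y_2^\infty)/y_2^\infty$ ensures that the straight line from the origin to $(\zeta,\hat{L}_1(\zeta))$ cannot serve as a valid convex minorant, so a true tangency with $\hat{G}_1$ in $(0,\hat{y}_1)$ must occur. With Step 2 in place, the fixed point construction of Step 3 goes through verbatim, delivering the equilibrium of the form \eqref{Nashform}.

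The main obstacle is the sign analysis of $\cL_1(0+,\zeta)$: we must convert the purely geometric hypothesis $y_T\le\hat{y}_2$ on the largest intersection of the line $y\mapsto \hat{G}_1'(0+)y$ with $\hat{L}_1$ into a uniform strict negative sign for $\cL_1(0+,\zeta)$ on the whole range $\zeta\in(\hat{y}_2,+\infty)$. The assumption $\lim_{y\to\infty}\hat{L}_1(y)>-\infty$ is crucial here, ensuring that the line and $\hat{L}_1$ separate strictly for large $y$ so that $y_T$ is genuinely the last crossing; without this, the line could re-cross $\hat{L}_1$ above $\hat{y}_2$ and break the sign of $\cL_1(0+,\cdot)$, undermining the intermediate value argument.
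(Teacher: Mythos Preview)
Your proof is correct and follows essentially the same three-step architecture as the paper. The only substantive difference is in how you pin down the sign of $\hat{L}_1(\zeta)-\hat{G}_1'(0+)\zeta$ on $(y_T,+\infty)$: you use the asymptotic hypothesis $\lim_{y\to\infty}\hat{L}_1(y)>-\infty$ (so that $\hat{L}_1(y)-\hat{G}_1'(0+)y\to+\infty$), whereas the paper instead invokes the middle inequality of \eqref{ineq2} directly at the point $y_2^\infty>\hat{y}_2\ge y_T$ to conclude positivity. Both routes are valid since both conditions appear among the hypotheses; your version makes more explicit use of the analytic formula for $\cL_1(0+,\zeta)$ and the monotonicity $\partial_u\cL_1>0$, while the paper phrases the same conclusion geometrically (``one can always construct a unique straight line passing through $(\zeta,\hat{L}_1(\zeta))$ and tangent to $\hat{G}_1$''). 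Your added remark on verifying optimality of $\tau_1(x_1(\zeta))$ via the convex-minorant description goes slightly beyond what the paper spells out here, but is consistent with Appendix \ref{app:secondOS}.
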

\begin{proof}

Since $\hat{G}'_1(0+)y$ does not cross $\hat{L}_1(y)$ for $y>y_T$, it must be that $\hat{L}_1(y)-\hat{G}'_1(0+)y$ is either strictly positive or strictly negative for $y>y_T$.  However the latter would violate \eqref{ineq2}, since $y^\infty_2>\hat{y}_2\ge y_T$, and hence is impossible. Then we must have $\hat{L}_1(y)-\hat{G}'_1(0+)y>0$ for $y>\hat{y}_2$. 
Hence by strict convexity of $\hat{G}_1$ in  $(0,\hat{y}_1)$ and a simple geometric argument, for $\zeta>\hat y_2$ one can always construct a unique straight line passing through $(\zeta,\hat{L}_1(\zeta))$ and tangent to $\hat{G}_1$ at a point of $(0,\hat{y}_1)$. Thus $\cL_1(\,\cdot\,,\zeta)$ has a unique root $y_1(\zeta)\in(0,\hat{y}_1)$ for each $\zeta>\hat{y}_2$.  

This argument shows that Step 2 of the proof of Theorem \ref{thm:construction} may be carried out in the present setting. Step $1$ is analogous, and Step $3$ follows.
\end{proof}

Under the assumptions of Proposition \ref{prop:L1hatlim}, we notice that Corollary \ref{cor:exist}  and Proposition \ref{prop:values} continue to hold, i.e.~any solution of \eqref{system-construction} leads to a Nash equilibrium of threshold type and to the related analytical properties of the equilibrium payoffs.

\begin{remark}
1. It is important to notice that for the existence of an equilibrium we have not examined whether or not $\hat{L}_2$ and $\hat{G}_2$ have maxima (see the proof of Theorem \ref{thm:construction}). Instead the existence of these maxima and their position is used in Theorem \ref{thm:unique} to establish uniqueness of the equilibrium. In the current setting $\hat{L}_2$ and $\hat{G}_2$ have maxima if and only if $\hat{L}'_2(0+)>0$ and $\hat{G}'_2(0+)>0$. Therefore assuming the latter along with conditions of Proposition \ref{prop:L1hatlim} we have that Theorem \ref{thm:unique} holds.

2. Even though $\phi_r(x)\uparrow+\infty$ as $x\downarrow \underline{x}$, when $\underline{x}$ is an entrance-not-exit boundary condition \eqref{lims} may become more restrictive. For instance for a Bessel process with index $\nu=1/2$ (i.e.~dimension $\delta=3$) one has $\phi_r(x)\sim 1/x$ as $x\to0$ (see \cite{BS} Appendix 2, pp.~638 and 654). In this case, we may
relax \eqref{lims} for $G_1$ by requiring
\begin{align*}
\lim_{x\downarrow \underline{x}}\frac{G_1}{\phi_r}(x)=A_{G_1}\in(-\infty,+\infty).
\end{align*}
All the above arguments can then be adapted to establish the existence and uniqueness results for Nash equilibria. We omit further details here because in the next section we analyse a similar situation in the case when $\underline{x}$ is an exit-not-entrance boundary and \eqref{lims} becomes a serious restriction.
\end{remark}

\subsection{The case of an exit-not-entrance boundary}
\label{exit}

Here we extend the analysis carried out in the previous two sections by addressing the case of a diffusion with a lower exit-not-entrance boundary $\underline{x}$ and an upper natural boundary $\overline{x}$. We sketch most proofs, drawing out key differences with the previous arguments.

Equations \eqref{psiphiproperties1}, \eqref{psiphiproperties2} and \eqref{psiphiproperties3} continue to hold if we replace
\begin{align}
\label{newphi2}& \lim_{x\downarrow \underline{x}}\phi_r(x)=+\infty\:\:\:\text{by}\:\:\:\lim_{x\downarrow \underline{x}}\phi_r(x)<+\infty,\\
\label{newpsi2}& \lim_{x\downarrow \underline{x}}\frac{\psi'_r(x)}{S'(x)}=0\:\:\:\text{by}\:\:\:\lim_{x\downarrow \underline{x}}\frac{\psi'_r(x)}{S'(x)}>0.
\end{align}
This setting is adopted in the remainder of this section.

We see that $\phi_r(\underline{x}+)$ is now finite so that imposing \eqref{lims} on the stopping costs requires them to vanish at $\underline{x}$ (recall that $\phi_r$ is positive). Hence from now on we shall relax the definition of the set $\cA$ by replacing the condition \eqref{lims} with
\begin{align}\label{repl}
\lim_{x\downarrow\underline{x}}\frac{H}{\phi_r}(x)=A_H
\end{align}
for some $A_H\in\RR$ depending on $H$. 
For any $H\in\cA$ Dynkin's formula, standard localisation and \eqref{resolvent} give
\begin{align}\label{res2a}
H(x)=A_H\phi_r(x)-W^{-1}\Big[\phi_r(x)\int^x_{\underline{x}}{\psi_r(y)h(y)m'(y)dy}+\psi_r(x)\int_x^{\overline{x}}{\phi_r(y)h(y)m'(y)dy}\Big]
\end{align}
and for $(H/\phi_r)'(x)$ we have the same expression as in \eqref{res3}. 

The geometric implications of the present setting are as follows. Since $\lim_{x\downarrow\underline{x}}(\phi'_r/S')(x) = - \infty$ as in the natural boundary case, one can prove as in Lemma \ref{lemm:concavity} that
\begin{align}\label{infd}
H\in\cA_1\Rightarrow \hat{H}'(0+)=-\infty\quad\text{and}\quad H\in\cA_2\Rightarrow \hat{H}'(0+)=+\infty.
\end{align}

Thanks to the latter observation one has that, under the new definition of $\cA$, Lemma \ref{lemm:concavity} holds for 
in the same form with only the exception of the lower boundary conditions: now indeed we have $\hat{H}(0+)=A_{H}$.
As one may expect the sign of $A_{G_1}$ plays a crucial role in determining the existence of Nash equilibria. We study the two possible cases below, while we always assume $A_{G_2}\ge 0$ for simplicity.
\begin{prop}\label{prop:exit1}
If $A_{G_1}\le 0$ then Theorem \ref{thm:construction} holds when $\underline{x}$ is an exit-not-entrance boundary.
\end{prop}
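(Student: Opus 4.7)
The plan is to re-run the three-step argument of Theorem \ref{thm:construction}, verifying that the modified behaviour at $\underline{x}$ does not invalidate any estimate. In the $F_r$-picture the sole change is that $\hat{H}(0+)=A_H$ instead of $0$; the slopes $\hat{H}'(0+)=-\infty$ for $H\in\cA_1$ and $\hat{H}'(0+)=+\infty$ for $H\in\cA_2$ survive by \eqref{infd}, the convexity/concavity and monotonicity away from $0$ are as in Lemma \ref{lemm:concavity}, and the limits at $+\infty$ are unchanged because $\overline{x}$ is still natural.

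Step 1 (best reply of $P_2$) carries over unchanged. The bound $\cL_2(\hat{y}_2,\zeta)>\hat{G}_2(\zeta)-\hat{L}_2(\zeta)>0$ uses only strict concavity of $\hat{G}_2$ on $(0,\hat{y}_2)$ and Assumption \ref{ass:flessi}$(i)$, both of which are preserved, while $\cL_2(u,\zeta)\to-\infty$ as $u\uparrow+\infty$ follows from \eqref{psiphiproperties2} at the natural boundary $\overline{x}$.

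Step 2 (best reply of $P_1$) is the place where $A_{G_1}\le 0$ intervenes. For $\zeta>\hat{y}_2$ the strict convexity of $\hat{G}_1$ on $(0,\hat{y}_1)$ yields $\partial_1\cL_1(u,\zeta)=-\hat{G}_1''(u)(u-\zeta)>0$, so $u\mapsto\cL_1(u,\zeta)$ is strictly increasing; at $u=\hat{y}_1$ the inflection-tangent computation of the natural-boundary proof gives $\cL_1(\hat{y}_1,\zeta)\ge\hat{G}_1(\zeta)-\hat{L}_1(\zeta)>0$. At $u\downarrow 0$ I use $\hat{G}_1(u)\to A_{G_1}$, $\hat{G}_1'(u)\to-\infty$ and $u-\zeta\to-\zeta<0$ to conclude $-\hat{G}_1'(u)(u-\zeta)\to-\infty$, hence $\cL_1(0+,\zeta)=-\infty$, which yields a unique root $y_1(\zeta)\in(0,\hat{y}_1)$. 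Optimality of $\tau_1(F_r^{-1}(y_1(\zeta)))$ then follows from the Dayanik-Karatzas tangent-line characterisation as in Appendix \ref{app:secondOS}; the modified resolvent representation \eqref{res2a} adds only the harmless $r$-harmonic summand $A_{G_1}\phi_r(\cdot)$, and the sign condition $A_{G_1}\le 0$ guarantees that the tangent at $y_1(\zeta)$ passing through $(\zeta,\hat{L}_1(\zeta))$ remains below $\hat{G}_1$ throughout $(0,F_r(z))$, so that the largest-convex-minorant interpretation of the value function is preserved.

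Step 3 (fixed point) then proceeds verbatim: continuity of $\zeta\mapsto y_i(\zeta)$ follows from the implicit function theorem, positivity of $\cL_2(\hat{y}_2,y_1(\hat{y}_2))$ is established as in Step 1, and $\cL_2(u,y_1(u))\to-\infty$ as $u\uparrow+\infty$ holds by the same estimate as before, producing a solution of \eqref{system-construction} and hence a Nash equilibrium of threshold type \eqref{st-times}. The main obstacle is purely the bookkeeping of how the shifted value $\hat{G}_1(0+)=A_{G_1}$ propagates through the tangent-line construction in Step 2; the condition $A_{G_1}\le 0$ is the natural demarcation needed to keep that construction intact, and one should expect a genuinely different geometry (and a different statement) once $A_{G_1}>0$.
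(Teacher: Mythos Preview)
Your approach is the same as the paper's: the latter simply observes that \eqref{infd} (giving $\hat{G}_1'(0+)=-\infty$ and $\hat{G}_2'(0+)=+\infty$) allows the three-step proof of Theorem \ref{thm:construction} to go through with only trivial modifications. Your more explicit walk-through of Steps 1--3 is correct in outline.

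There is, however, an imprecision in your explanation of where $A_{G_1}\le 0$ actually enters in Step 2. You write that this condition ``guarantees that the tangent at $y_1(\zeta)$ passing through $(\zeta,\hat{L}_1(\zeta))$ remains below $\hat{G}_1$ throughout $(0,F_r(z))$''. But that inequality holds regardless of the sign of $A_{G_1}$: on $(0,\hat{y}_1)$ it is immediate from the strict convexity of $\hat{G}_1$, and on $(\hat{y}_1,\zeta)$ it follows because $\hat{G}_1$ is concave there, the tangent touches at $y_1(\zeta)<\hat{y}_1$, and its value at $\zeta$ is $\hat{L}_1(\zeta)<\hat{G}_1(\zeta)$ (cf.\ Step 2 in the proof of Proposition \ref{prop:values}). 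The condition $A_{G_1}\le 0$ is instead precisely the hypothesis $\hat{G}(0+)\le 0$ of the Corollary following Proposition \ref{prop:app-OS-tris}, and it is needed to ensure that the optimal stopping set for $P_1$'s best-reply problem is the \emph{full} half-line $(\underline{x},x_1(\zeta)]$. When $A_{G_1}>0$, the largest convex minorant of $Q$ (which must pass through the origin in the $\hat{\cdot}$-picture, since $\psi_r(\underline{x}+)=0$) acquires an additional linear piece $y\mapsto\hat{G}_1'(y_S)\,y$ near $y=0$, producing a bounded stopping interval $[x_S,x_1(\zeta)]$ as in Proposition \ref{prop:S} and thereby destroying the threshold structure \eqref{st-times}. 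Once this point is corrected, your argument is complete.
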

\begin{proof}
Condition \eqref{infd} implies that the construction of an equilibrium follows as in the proof of Theorem \ref{thm:construction} up to trivial adjustments.
\end{proof}

We now consider $A_{G_1}>0$ but with the additional requirement 
\begin{align}\label{eq:inf}
\inf_{x\in\cI}G_1(x)<0.
\end{align}

In this case, from the above mentioned geometry of $\hat G_i$ there exists a unique straight line passing through the origin and tangent to $\hat{G}_1$. We denote the tangency point by $(y_S,\hat{G}_1(y_S))$ so that $y_S\in(0,\hat y_1)$ is the unique solution of
\begin{align}
\hat{G}_1(y_S)={y_S}\hat{G}'_1(y_S).
\end{align}
Repeating arguments as in the proof of Proposition \ref{prop:inf}, up to straightforward modifications, we obtain a similar result:
\begin{prop}
Let $A_{G_1}>0$ and assume \eqref{eq:inf}. Let Assumption \ref{ass:flessi} hold with \eqref{repl} in place of \eqref{lims}. Let also $\tau^\infty_2=\tau_2(x^\infty_2)$ be optimal for \eqref{eq:auxOS}. Then $(\sigma_{\cI},\tau^\infty_2)$ is a Nash equilibrium if and only if
\begin{align}\label{ineq3}
\hat{G}'_1(y_S)> \frac{\hat{L}_1(y^\infty_2)}{y^\infty_2}
\end{align}
with $y^\infty_2=F_r(x^\infty_2)$.
\end{prop}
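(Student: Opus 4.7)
The plan is to mirror the two-step structure of the proof of Proposition~\ref{prop:inf}, with the natural modifications for the exit-not-entrance regime.

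\emph{Sufficiency.} Assume \eqref{ineq3}. As in Proposition~\ref{prop:inf}, since $\tau_2^\infty$ is optimal for \eqref{eq:auxOS}, it remains $P_2$'s best reply whenever $P_1$'s strategy reduces $P_2$'s optimisation to \eqref{eq:auxOS}; in particular for $\tau_1=\sigma_\cI$. It therefore remains to show that $\sigma_\cI$ is $P_1$'s best reply to $\tau_2^\infty$, i.e.\ that it solves \eqref{eq:auxOS2}. A direct Laplace-transform computation using $\psi_r(\underline x+)=0$ and the remaining identities for $\phi_r,\psi_r$ in the exit-not-entrance regime identifies the transformed payoff $\cJ_1(\sigma_\cI,\tau_2^\infty;x)/\phi_r(x)$ with an affine function $\hat{\mathcal V}_1$ of $y=F_r(x)$ whose slope is $\hat L_1(y_2^\infty)/y_2^\infty$. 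Appealing to the characterisation of the value function of \eqref{eq:auxOS2} as the largest convex minorant of the obstacle (cf.\ Appendix~\ref{2usefulOS}), it suffices to verify $\hat{\mathcal V}_1\le\hat G_1$ on $(0,y_2^\infty)$. Condition \eqref{ineq3} says precisely that the slope of $\hat{\mathcal V}_1$ is strictly less than $\hat G_1'(y_S)$, the slope of the unique tangent from the origin to $\hat G_1$ meeting it at $y_S\in(0,\hat y_1)$. Since by tangency $\hat G_1'(y_S)\,y\le\hat G_1(y)$ for $y\in(0,\hat y_1)$ and $\hat{\mathcal V}_1$ sits strictly below this tangent for $y>0$, the required inequality holds on $(0,\hat y_1)$; the extension to $(\hat y_1,y_2^\infty)$ then follows from concavity of $\hat G_1$ on $(\hat y_1,\infty)$, the obstacle inequality $\hat L_1<\hat G_1$, and linearity of $\hat{\mathcal V}_1$. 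This identifies $\hat{\mathcal V}_1$ with the value function of \eqref{eq:auxOS2}, so $\sigma_\cI$ is $P_1$'s best reply.

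\emph{Necessity.} Argue by contradiction. Suppose $(\sigma_\cI,\tau_2^\infty)$ is a Nash equilibrium and \eqref{ineq3} fails, i.e.\ $\hat G_1'(y_S)\le\hat L_1(y_2^\infty)/y_2^\infty$. Then $\hat{\mathcal V}_1$ lies on or above the tangent from origin to $\hat G_1$, and by strict convexity of $\hat G_1$ on $(0,\hat y_1)$ together with the uniqueness of the tangency point $y_S$, $\hat{\mathcal V}_1$ strictly exceeds $\hat G_1$ on a non-empty subinterval of $(0,\hat y_1)$. At any point of this subinterval $P_1$ strictly improves upon $\sigma_\cI$ by stopping immediately, contradicting the Nash property.

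\emph{Main obstacle.} The most delicate step is the geometric translation of \eqref{ineq3} into the feasibility of $\hat{\mathcal V}_1$ as a convex minorant of $\hat G_1$. Unlike in Proposition~\ref{prop:inf}, where $\hat G_1'(0+)$ is finite, here $\hat G_1'(0+)=-\infty$ makes the direct comparison at $0$ vacuous, and one must use the tangent from the origin at $y_S$ as a proxy. Verifying that this tangent and the associated line $\hat{\mathcal V}_1$ continue to dominate (respectively be dominated by) $\hat G_1$ across the inflection point $\hat y_1$ up to $y_2^\infty$ requires careful use of the convex--concave geometry of $\hat G_1$ and of the relation between $\hat L_1(y_2^\infty)$ and $\hat G_1(y_2^\infty)$.
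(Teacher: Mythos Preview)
Your proposal follows the same two-step sufficiency/necessity structure as the paper's proof of Proposition~\ref{prop:inf}, which is exactly what the paper indicates (it simply says ``Repeating arguments as in the proof of Proposition~\ref{prop:inf}, up to straightforward modifications''). The key geometric adaptation you make---replacing the comparison with $\hat G_1'(0+)$ by a comparison with the tangent from the origin at $y_S$---is precisely the intended modification, and your extension of the inequality $\hat{\mathcal V}_1\le\hat G_1$ from $(0,\hat y_1)$ to $(\hat y_1,y_2^\infty)$ via the concavity argument is correct.

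One point deserves care. You assert that a ``direct Laplace-transform computation'' identifies $\hat{\mathcal V}_1$ as the line through the \emph{origin} with slope $\hat L_1(y_2^\infty)/y_2^\infty$. Taken literally, the payoff $\cJ_1(\sigma_\cI,\tau_2^\infty;x)$ picks up the term $e^{-r\sigma_\cI}G_1(X_{\sigma_\cI})$ on $\{\sigma_\cI<\tau_2^\infty\}$, which would give intercept $A_{G_1}>0$, not zero. The line through the origin arises only under the (implicit) convention that the contribution at the exit boundary $\underline x$ vanishes---this is consistent with the paper's treatment (see the form of $v_1$ in Proposition~\ref{prop:values2} and the value function in Proposition~\ref{prop:app-OS-tris}, both of which are proportional to $\psi_r$ near $\underline x$), but you should make this convention explicit rather than hide it behind ``$\psi_r(\underline x+)=0$''. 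Also, your necessity argument does not cleanly handle the borderline case $\hat G_1'(y_S)=\hat L_1(y_2^\infty)/y_2^\infty$, where $\hat{\mathcal V}_1$ touches $\hat G_1$ at $y_S$ without strictly exceeding it; this mirrors the non-strict inequality in Proposition~\ref{prop:inf} and is more a feature of the statement than of your argument.
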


We now introduce 
\begin{align}
\hat{y}_T:=\sup\{y\ge y_S,\:\,\hat{G}'_1(y_S)y=\hat{L}_1(y)\}, \quad \text{with} \quad \sup\{\emptyset\}=y_S,
\end{align}
which will play a similar role to $y_T$ in the previous section. Before stating the next result we recall that since $\hat{G}_2$ is concave to the left of $\hat x_2$, an optimal boundary for $P_2$ will never lie in $(\underline x,\hat{x}_2)$ (see Step 1 in the proof of Theorem \ref{thm:construction}). 
\begin{prop}\label{prop:S}
Assume that $A_{G_1}>0$, that \eqref{eq:inf} holds and
\begin{align}\label{ineq4}
\hat{G}'_1(y_S)\le \frac{\hat{L}_1(y^\infty_2)}{y^\infty_2}<0.
\end{align}
Assume also that $\hat{y}_T<\hat{y}_2$ and $\lim_{y\to\infty}\hat{L}_1(y)>-\infty$. Set $x_S:=F^{-1}_r(y_S)$ and $\sigma_S:=\inf\{t\ge 0\,:\,X_t\ge x_S\}\wedge\sigma_\cI$, then with \eqref{repl} in place of \eqref{lims} in Assumption \ref{ass:flessi} one has
\begin{itemize}
\item[ a)] the couple $(\sigma_S, +\infty)$ is a Nash equilibrium for the game started at $x\in(\underline{x},x_S]$;
\item[ b)] the couple $(\tau^*_1,\tau^*_2)$ of Theorem \ref{thm:construction} is a Nash equilibrium for the game started at $x>x_S$.
\end{itemize}
\end{prop}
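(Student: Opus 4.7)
The proof separates into two parts according to the starting point $x$, and my plan is to handle each by adapting the three-step approach of Theorem \ref{thm:construction} to the present exit-not-entrance setting with $A_{G_1} > 0$.

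For Part (a), with $x \in (\underline{x}, x_S]$, I would verify the two best-reply conditions defining $(\sigma_S, +\infty)$ as a Nash equilibrium. Given $P_2 = +\infty$, player 1 faces a pure optimal stopping problem $\inf_{\tau}\EE_x[e^{-r\tau}G_1(X_\tau)]$. Using the two-boundary Laplace transforms on $(\underline{x}, x_S)$ together with the normalisation $\psi_r(\underline{x}) = 0$ for the exit-not-entrance endpoint, the payoff under $\sigma_S$ can be computed explicitly as
\[
\mathcal{J}_1(\sigma_S, +\infty; x) \;=\; \phi_r(x)\Big[A_{G_1} + \frac{F_r(x)}{y_S}\bigl(\hat{G}_1(y_S) - A_{G_1}\bigr)\Big].
\]
Optimality of $\sigma_S$ for $P_1$ then follows from the convex-minorant characterisation of the value function (as in Appendix \ref{2usefulOS}), with the tangency relation $\hat{G}_1(y_S) = y_S\hat{G}_1'(y_S)$ serving as the smooth-fit condition through the origin. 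Conversely, given $P_1 = \sigma_S$, player 2 faces $\inf_{\tau}\mathcal{J}_2(\sigma_S, \tau; x)$; since $L_2 < G_2$ gives $P_2$ an incentive to wait and the hypothesis $\hat{y}_T < \hat{y}_2$ precludes the formation of an internal stopping threshold for $P_2$ in $(\underline{x}, x_S]$, the best reply is $+\infty$.

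For Part (b), with $x > x_S$, I plan to rerun the three-step construction of Theorem \ref{thm:construction} with a key modification to Step 2. Step 1 (constructing $P_2$'s best reply to a threshold $z \in (\underline{x}, \hat{x}_1)$) is essentially unchanged. In Step 2, however, the fact that $A_{G_1} > 0$ (together with $\hat{G}_1'(0+) = -\infty$) means that for each $\zeta > \hat{y}_2$ the tangent from $(\zeta, \hat{L}_1(\zeta))$ to the graph of $\hat{G}_1$ cannot touch at arbitrarily small $y_0$: it must touch above $y_S$. The hypotheses \eqref{ineq4}, $\hat{y}_T < \hat{y}_2$, and $\lim_{y \to \infty}\hat{L}_1(y) > -\infty$ are precisely what guarantee the existence and uniqueness of such a tangent point $y_1(\zeta) \in (y_S, \hat{y}_1)$, by arguments analogous to those of Proposition \ref{prop:L1hatlim}. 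Step 3 (the fixed-point argument) and the Nash equilibrium certification via Corollary \ref{cor:exist}-type reasoning then proceed essentially as in Theorem \ref{thm:construction}, yielding a pair $(x_1^*, x_2^*)$ with $x_S < x_1^* < \hat{x}_1 < \hat{x}_2 < x_2^*$.

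The main obstacle is the careful accounting of the extra exit-payoff term $A_{G_1}\phi_r(x)$ appearing in \eqref{res2a}, which modifies both the convex-minorant construction of Part (a) and the integral representations developed in Section \ref{sec:H}. In particular, establishing the strict inequality $x_1^* > x_S$ in Part (b) — so that the two Nash equilibria of the proposition genuinely describe different strategies on their respective sub-domains, and that the equilibrium payoff transitions continuously across $x = x_S$ — requires a delicate geometric comparison between the tangent slope $\hat{G}_1'(y_S)$ from the origin and the family of tangent slopes indexed by $\zeta \in (\hat{y}_2, +\infty)$ arising in the modified Step 2. The inequality \eqref{ineq4} is designed to encode exactly this comparison.
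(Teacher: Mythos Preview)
Your broad architecture (verify best replies for Part (a), adapt the three-step construction for Part (b)) matches the paper's. However, there is a genuine error and a structural point you are missing.

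\textbf{Part (a), $P_2$'s best reply.} You write that ``the hypothesis $\hat{y}_T<\hat{y}_2$ precludes the formation of an internal stopping threshold for $P_2$ in $(\underline{x},x_S]$''. This cannot be right: $\hat{y}_T$ is defined purely in terms of $\hat{L}_1$ and the slope $\hat{G}'_1(y_S)$, i.e.\ in terms of player~1's cost functions, so it tells you nothing about player~2's incentive to stop. The correct argument, and the one the paper uses, is that $y_S<\hat{y}_1<\hat{y}_2$ (this is automatic from the setup), so $\hat{G}_2$ is strictly concave on $[0,y_S]$; equivalently $(\LL_X-r)G_2<0$ on $(\underline{x},x_S)$. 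Hence for $x\le x_S$ stopping strictly before $\sigma_S$ can only raise $P_2$'s cost, and since $L_2<G_2$ the best reply is to wait indefinitely. The hypothesis $\hat{y}_T<\hat{y}_2$ is used elsewhere (it guarantees the tangent-line construction in Step~2 of Part (b) goes through), not here.

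\textbf{The two-sided stopping region.} Your discussion of Part (b) treats $P_1$'s best reply as a one-sided threshold $y_1(\zeta)$ and then worries separately about securing $y_1(\zeta)>y_S$. The paper's organising observation is cleaner: in the present geometry (with $A_{G_1}>0$, $\hat G_1(0+)=A_{G_1}$, and \eqref{eq:inf}) the largest convex minorant $W_1$ of $\hat{G}_1$ passing through $(y_0,\hat{L}_1(y_0))$ has \emph{two} straight portions --- the usual smooth-fit tangent at $y_1(y_0)$ and the line $r_S(y)=\hat{G}'_1(y_S)\,y$ on $[0,y_S]$ --- so that $P_1$'s optimal stopping set is the interval $[x_S,x_1(y_0)]$. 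This is exactly Proposition~\ref{prop:app-OS-tris}, and it is what simultaneously delivers $\sigma_S$ as the best reply for $x\le x_S$ (entering the stopping interval from below at $x_S$) and $\tau_1(x_1^*)$ as the best reply for $x>x_S$ (entering from above at $x_1^*$). Once you invoke this, the ``delicate geometric comparison'' you flag as the main obstacle dissolves: the inequality $x_1^*\ge x_S$ is built into the two-sided structure, and the fixed-point argument of Theorem~\ref{thm:construction} then runs essentially unchanged for Part~(b).
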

\begin{proof}
We omit details of the proof which involve the repetition of arguments employed several times above. In terms of the problem geometry, the only difference in the present case is that for any $y_0>\hat{y}_2$ the largest convex function $W_1$ dominated by $\hat{G}_1$ and passing through $(y_0,\hat{L}_1(y_0))$ has at most two straight portions: $i)$ the usual one connecting $\hat{L}_1(y_0)$ to $\hat{G}_1$ via the smooth-fit equation $\cL_1(y_1(y_0),y_0)=0$ and $ii)$ the straight line $r_S(y):=\hat{G}'_1(y_S)\,y$ for $y\in[0,y_S]$. Proposition \ref{prop:app-OS-tris} shows that $W_1$ provides $P_1$'s minimal expected cost in this setting.

If $x\in(\underline{x},x_S]$ then with probability one $P_1$ stops prior to $P_2$, at time $\sigma_S$, because $\hat{G}_2$ is concave on $[0,y_S]$. Hence $a)$ holds, because stopping in finite time can only increase $P_2$'s expected cost. On the other hand $b)$ is obtained as in the proof of Theorem \ref{thm:construction}.
\end{proof}

In the setting of Proposition \ref{prop:S}, for each $x$ the additional assumptions of Theorem \ref{thm:unique} are again sufficient for the uniqueness of the equilibria we have obtained. Similarly they are also sufficient in the case $A_{G_1}\le0$. 

We also remark that for $A_{G_1}\le 0$, Corollary \ref{cor:exist} and Proposition \ref{prop:values} hold in the same form whereas for $A_{G_1}>0$ they hold in a slightly more complex form. We provide a full statement for completeness but skip the proof as it is the same as the original one up to minor adjustments.
\begin{prop}
\label{prop:values2}
Let all the assumptions of Proposition \ref{prop:S} hold. Let $(y_1,y_2)\in(0,\hat{y}_1)\times(\hat{y}_2,+\infty)$ be a solution of \eqref{system-construction} and for $x_i:=F^{-1}_r(y_i)$, $i=1,2$ set
\begin{align}\label{eq:ver00}
\hat{\tau}_1:=\tau_1(x_1),\quad\hat{\tau}_2:=\tau_2(x_2).
\end{align}
Then the couple $(\hat{\tau}_1, \hat{\tau}_2)$ is a Nash equilibrium for the game started at $x\in[x_S, \overline{x})$  whereas the couple $(\sigma_S,+\infty)$ is a Nash equilibrium for the game started at $x\in(\underline{x},x_S)$.

Moreover the equilibrium payoffs of the two players, i.e.~$v_i(x)=\cJ_i(\hat{\tau}_1,\hat{\tau}_2;x)$, $i=1,2$, are given by the functions
\begin{equation}
\label{eq:value10}
v_1(x):=
\left\{
\begin{array}{ll}
p_1\psi_r(x), &\underline{x}<x<x_S, \\[+4pt]
G_1(x), & x_S\le x \leq x_1,\\[+4pt]
m_1\psi_r(x)+q_1\phi_r(x) &  x_1 < x < x_2,\\[+4pt]
L_1(x), & x \geq x_2,
\end{array}
\right.
\end{equation}
and
\begin{equation}
\label{eq:value20}
v_2(x):=
\left\{
\begin{array}{ll}
p_2\psi_r(x), &\underline{x}<x<x_S, \\[+4pt]
L_2(x), & x_S\le x \leq x_1,\\[+4pt]
m_2\psi_r(x)+q_2\phi_r(x), & x_1 < x < x_2\\[+4pt]
G_2(x), & x \geq x_2,
\end{array}
\right.
\end{equation}
with $m_i$, $q_i$, $i=1,2$ as in Proposition \ref{prop:values}, $p_1:=G_1(x_S)/\psi_r(x_S)$ and $p_2:=L_2(x_S)/\psi_r(x_S)$. Also $v_1\in C({\cI})$ with $v_1\in W^{2,\infty}_{loc}(\underline{x},x_2)$ and $v_2\in C({\cI})$ with $v_2\in W^{2,\infty}_{loc}(x_1,\overline{x})$.
\end{prop}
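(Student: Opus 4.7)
My plan is to follow the three-step template of Proposition \ref{prop:values}, handling the extra region $(\underline x, x_S)$ as the only real novelty. The Nash equilibrium assertions are immediate from Proposition \ref{prop:S}, so the task is to identify the candidate functions $v_1,v_2$ in \eqref{eq:value10}--\eqref{eq:value20} with the corresponding equilibrium payoffs $\cJ_i(\hat\tau_1,\hat\tau_2;x)$ (in the appropriate $x$-region) and $\cJ_i(\sigma_S,+\infty;x)$ on $(\underline x,x_S)$.

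Step 1 (computation of candidate payoffs). For $x\ge x_S$, apply the argument of Proposition \ref{prop:values}: on $(x_1,x_2)$ neither player stops immediately, so $v_i$ solves $(\LL_X-r)v_i=0$ and is a linear combination of $\psi_r,\phi_r$; the constants $m_i,q_i$ are fixed by continuity at the endpoints $x_1,x_2$. On $[x_S,x_1]$ we set $v_1=G_1$ since $P_1$ stops instantly, and consequently $v_2=L_2$. On $[x_2,\overline x)$ symmetrically $v_2=G_2$, $v_1=L_1$. For $x\in(\underline x,x_S)$, $P_2$ chooses $\tau=+\infty$ and $P_1$ plays $\sigma_S$; since $\psi_r(\underline x+)=0$ (recall that $\underline x$ exit-not-entrance still gives $\psi_r(\underline x+)=0$ as in \eqref{psiphiproperties1}), formula \eqref{eqn:lapl} extended to account for exit yields $\EE_x[e^{-r\sigma_S}\mathds{1}_{\{X_{\sigma_S}=x_S\}}]=\psi_r(x)/\psi_r(x_S)$, while the event $\{X$ exits at $\underline x$ first$\}$ contributes zero by the convention \eqref{limit} (extended as in the previous section). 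This yields $v_1(x)=p_1\psi_r(x)$ and $v_2(x)=p_2\psi_r(x)$ with $p_1,p_2$ as stated.

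Step 2 (regularity). Continuity of $v_1$ at $x_S$ follows directly from the choice $p_1=G_1(x_S)/\psi_r(x_S)$; continuity at $x_1$ and $x_2$ follows from the choice of $m_1,q_1$ as in \eqref{eqn:lincf1}. The smooth fit at $x_1$ and $x_2$ is encoded in \eqref{system-construction} exactly as in Step 1 of Proposition \ref{prop:values}'s proof; note that $v_1$ need not be $C^1$ at $x_S$, but this does not obstruct the $W^{2,\infty}_{loc}(\underline x,x_2)$ regularity because the transformed function $\hat v_1$ coincides with the tangent line $r_S(y)=\hat G_1'(y_S)\,y$ on $(0,y_S]$ and joins $\hat G_1$ with matching first derivative there. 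Symmetric statements hold for $v_2$.

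Step 3 (verification via It\^o--Tanaka). For $x\in[x_S,\overline x)$ and any $\sigma\in\cT$, the computation \eqref{dyn00} goes through verbatim once one checks that $(\LL_X-r)v_1\ge0$ on $(\underline x,x_2)$ and $v_1\le G_1$ on $\cI$: on the added interval $(\underline x,x_S)$ we have $(\LL_X-r)(p_1\psi_r)=0$, and the inequality $p_1\psi_r\le G_1$ is the geometric fact that the tangent line $r_S$ lies below $\hat G_1$ on $(0,y_S]$ (used already in Proposition \ref{prop:S}). For $x\in(\underline x,x_S)$ the localised It\^o--Tanaka identity applied up to $\sigma\wedge\sigma_S$ identifies $v_1(x)$ with $\cJ_1(\sigma_S,+\infty;x)$, while deviations by $P_1$ increase the cost by the same obstacle/ODE argument. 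The analogous argument for $v_2$ uses $(\LL_X-r)v_2\ge 0$ on $(x_1,\overline x)$ and $v_2\le G_2$.

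The main technical point I expect to be delicate is the localisation at the exit-not-entrance boundary $\underline x$ in Step 3 for $x\in(\underline x,x_S)$: one must show that boundary terms at a localising sequence $\tau_n\uparrow\sigma_\cI\wedge\sigma_S$ vanish in the limit, using the integrability condition \eqref{lims2} together with \eqref{repl} and the fact that $v_1/\phi_r$ and $v_2/\phi_r$ admit finite limits at $\underline x$ (with $v_i/\phi_r\to 0$ because $\psi_r/\phi_r\to 0$). Everything else is a routine adaptation of the arguments in Proposition \ref{prop:values}.
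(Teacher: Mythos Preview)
Your proposal is correct and follows the same approach as the paper, which in fact omits the proof entirely, stating only that ``it is the same as the original one [Proposition~\ref{prop:values}] up to minor adjustments.'' Your outline supplies precisely those adjustments---the treatment of the additional region $(\underline x,x_S)$ via the tangent line $r_S$ and Proposition~\ref{prop:app-OS-tris}---and is therefore more detailed than what the paper itself provides.

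One minor correction: you write that ``$v_1$ need not be $C^1$ at $x_S$,'' but in fact the smooth fit \emph{does} hold there. By construction $y_S$ is the tangency point of the line $r_S(y)=\hat G_1'(y_S)\,y$ with $\hat G_1$, so $\hat v_1$ is $C^1$ across $y_S$, and hence $v_1$ is $C^1$ at $x_S$; you effectively say this yourself in the next clause. It is $v_2$ that is generally only continuous (not $C^1$) at $x_S$, which is harmless since the claimed $W^{2,\infty}_{loc}$ regularity for $v_2$ is only on $(x_1,\overline x)$, and $x_S<x_1$.
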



\subsection{An alternate sufficient condition}
\label{relax}
In this section we consider similar preferences for the two players, in the sense that both stopping cost functions are drawn from the class $\cA_1$ (or both drawn from $\cA_2$). This implies that we must necessarily drop part \emph{(ii)} of Assumption \ref{ass:flessi}. Throughout the section we assume again that $\overline{x}$ is natural and $\underline{x}$ is either natural 
or entrance-not-exit. We will refer below to the stopping problems
\begin{align}\label{freeOS}
\inf_{\rho}\EE_x\left[e^{-r\rho}G_i(X_\rho)\right],\qquad i=1,2.
\end{align}

\begin{prop}\label{prop:relax}
Let $(i)$ and $(iii)$ of Assumption \ref{ass:flessi} hold. Assume now that $G_1, G_2 \in\cA_1$, with $\inf_{x\in\cI}G_i(x)<0$. Then, recalling \eqref{st-times0}, there exist $x'_i \in \cI$, $i=1,2$ such that $\rho^{1,\infty}_i:=\tau_1(x'_i)$, $i=1,2$ are optimal for \eqref{freeOS}. 

Moreover in the game \eqref{functional0}-\eqref{functional1} we have:
\begin{enumerate}
\item if $x'_1>x'_2$ then the couple $(\rho^{1,\infty}_1,+\infty)$ realises a Nash equilibrium;
\item if $x'_1<x'_2$ then the couple $(+\infty,\rho^{1,\infty}_2)$ realises a Nash equilibrium;
\item if $x'_1=x'_2$, then both the couples $(\rho^{1,\infty}_1,+\infty)$ and $(+\infty,\rho^{1,\infty}_2)$ realise Nash equilibria.
\end{enumerate}  
\end{prop}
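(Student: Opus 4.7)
The plan is twofold. First, I would construct the optimal threshold $x'_i$ for each single-player problem \eqref{freeOS} via the convex-minorant method of \cite{DayKar}. Then I would verify directly in each of the three cases that the proposed strategy pair satisfies Definition \ref{def:Nash}, by a strong Markov computation combined with the geometry of $\hat{G}_i$ obtained in the first step.

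For the existence of $x'_i$, under $G_i\in\cA_1$ and $\inf G_i<0$, Lemma \ref{lemm:concavity} (natural lower boundary) or Lemma \ref{lemm:concavity2} (entrance-not-exit lower boundary) yield that $\hat{G}_i$ is strictly convex on $(0,\hat{y}_i)$ with $\hat{G}_i(0+)=0$, strictly concave and strictly increasing on $(\hat{y}_i,+\infty)$ with $\lim_{y\to\infty}\hat{G}_i(y)=+\infty$. The hypothesis $\inf G_i<0$ forces $\hat{G}_i'(0+)<0$ (otherwise convexity would give $\hat{G}_i\ge 0$ and hence $G_i\ge 0$ on $\cI$), so $\hat{G}_i$ attains its unique global minimum at some $\overline{y}_i\in(0,\hat{y}_i)$. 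Consequently, the largest convex minorant of $\hat{G}_i$ on $[0,+\infty)$ equals $\hat{G}_i$ on $[0,\overline{y}_i]$ and is constant thereafter; by \cite{DayKar} this minorant is the transformed value function of \eqref{freeOS} and therefore $\rho^{1,\infty}_i:=\tau_1(x'_i)$ with $x'_i:=F_r^{-1}(\overline{y}_i)\in\cI$ is optimal in \eqref{freeOS}.

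Consider case (1), $x'_1>x'_2$, and the candidate equilibrium $(\rho^{1,\infty}_1,+\infty)$. Player 1's inequality is immediate: when player 2 plays $+\infty$, the convention \eqref{limit} reduces player 1's payoff to \eqref{freeOS} with $i=1$, whose optimiser is $\rho^{1,\infty}_1$. For player 2, fix any $\tau_2\in\cT$ and $x\in\cI$; for $x\le x'_1$ one has $\rho^{1,\infty}_1=0$ and the desired inequality reduces to $G_2(x)\ge L_2(x)$, so assume $x>x'_1$. On $\{\tau_2\le\rho^{1,\infty}_1\}$, path continuity gives $X_{\tau_2}\ge x'_1=X_{\rho^{1,\infty}_1}$, and applying the strong Markov property at $\tau_2$ together with \eqref{eqn:lapl} yields
\begin{align*}
&\mathcal{J}_2(\rho^{1,\infty}_1,\tau_2;x)-\mathcal{J}_2(\rho^{1,\infty}_1,+\infty;x)\\
&\quad=\EE_x\Bigl[e^{-r\tau_2}\phi_r(X_{\tau_2})\bigl(\hat{G}_2(F_r(X_{\tau_2}))-\hat{L}_2(F_r(x'_1))\bigr)\mathds{1}_{\{\tau_2\le\rho^{1,\infty}_1\}}\Bigr].
\end{align*}
On this event $F_r(X_{\tau_2})\ge F_r(x'_1)>\overline{y}_2$, and $\hat{G}_2$ is strictly increasing on $(\overline{y}_2,+\infty)$, so $\hat{G}_2(F_r(X_{\tau_2}))\ge\hat{G}_2(F_r(x'_1))>\hat{L}_2(F_r(x'_1))$ by $L_2<G_2$; the integrand is nonnegative and the equilibrium inequality for player 2 follows. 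Case (2), $x'_1<x'_2$, is handled by the symmetric argument with the players swapped. In case (3), $x'_1=x'_2=:x'$, one has $F_r(x')=\overline{y}_1=\overline{y}_2$ so both previous arguments apply simultaneously, and each of $(\rho^{1,\infty}_1,+\infty)$ and $(+\infty,\rho^{1,\infty}_2)$ is a Nash equilibrium via the borderline inequality $\hat{G}_i(F_r(X_{\tau_{3-i}}))\ge\min\hat{G}_i>\hat{L}_i(F_r(x'))$.

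The principal technical obstacle lies in the strong Markov conditioning: one must carefully handle the possibility that $\rho^{1,\infty}_1=+\infty$ with positive probability (as can occur when $\underline{x}$ is entrance-not-exit and $X$ is transient towards $\overline{x}$) via the convention \eqref{limit}, and secure finiteness of all expectations via Assumption \ref{ass:flessi}(iii) together with \eqref{ass:lim}. Once these points are in place, the key geometric inequality $\hat{G}_2(\,\cdot\,)\ge\hat{L}_2(F_r(x'_1))$ on $[F_r(x'_1),+\infty)$, and its symmetric counterpart, follows immediately from the shape of $\hat{G}_i$ established in the first step.
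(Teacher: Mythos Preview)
Your argument is correct and reaches the same conclusion as the paper, but the verification step takes a somewhat different route. One small imprecision: in your first paragraph, the function you describe is not the \emph{largest convex minorant} of $\hat G_i$ (that would follow $\hat G_i$ up to $\hat y_i$ and then continue along the tangent line), but rather the largest \emph{non-positive} convex minorant, which is indeed the object characterised in \cite{DayKar} as the transformed value of \eqref{freeOS}. This does not affect your identification of $x'_i=F_r^{-1}(\overline y_i)$.

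For the equilibrium verification the paper proceeds differently: rather than computing the difference $\mathcal J_2(\rho^{1,\infty}_1,\tau_2;x)-\mathcal J_2(\rho^{1,\infty}_1,+\infty;x)$ via the strong Markov property, it bounds the value $u(x):=\inf_\tau \mathcal J_2(\rho^{1,\infty}_1,\tau;x)$ from below by $(L_2/\phi_r)(x'_1)\,\phi_r(x)$ using the monotonicity of $G_2/\phi_r$ on $[x'_1,\overline x)$ together with the martingale property of $(e^{-rt}\phi_r(X_t))_{t\ge0}$, and then observes that $\tau=+\infty$ attains this bound. Both arguments hinge on the same geometric fact---that $\hat G_2$ is increasing on $[\overline y_2,+\infty)$ and hence $\hat G_2(F_r(X_{\tau_2}))\ge \hat G_2(F_r(x'_1))>\hat L_2(F_r(x'_1))$ on $\{\tau_2\le\rho^{1,\infty}_1\}$---so the difference is one of packaging. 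Your direct comparison is arguably cleaner since it avoids introducing the auxiliary value $u$, while the paper's martingale bound makes the role of $\phi_r$ more transparent and sidesteps the explicit strong Markov conditioning (and its attendant care with $\{\rho^{1,\infty}_1=+\infty\}$) that you rightly flag as the main technical point.
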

\begin{proof}
For $i=1,2$ the existence of $x'_i$ may be easily verified since \eqref{freeOS} is an optimal stopping problem of the type studied in  \cite{DayKar} (notice that $\inf_{x\in\cI}G_i(x)<0$, $i=1,2$ guarantees that the trivial choice $\rho=+\infty$, $\PP_x$-a.s.~is not optimal in \eqref{freeOS}). The geometric solution method described therein may therefore be applied, namely the construction of the largest {\em non-positive} convex function dominated by $\hat{G}_i$. Under the current assumptions, $\hat{G}_i$ has a unique negative minimum at $\overline{y}_i\in(0,\hat{y}_i)$ (recall Definition \ref{def:inflection}). It follows that the stopping sets for problems \eqref{freeOS} are of the form $(\underline{x},x'_i]$ with $x'_i:=F^{-1}_r(\overline{y}_i)$ for $i=1,2$. Note that a separate verification argument is not required in this case, since the sufficiency of this construction is proved in \cite{DayKar}.

Next we establish the equilibria. We only consider the case $x'_1>x'_2$ as the other ones are analogous. Let us start by assuming that $P_1$ stops at $\rho^{1,\infty}_1$ and analyse $P_2$'s best reply. When the game is started at $x\le x'_1$, $P_2$ can either stop and incur a cost $G_2(x)$, or continue (i.e.~picking any $\tau_2>0$) and incur a cost $L_2(x)$. Hence the payoff for $P_2$ is $L_2(x)\mathds{1}_{\{\tau_2>0\}}+G_2(x)\mathds{1}_{\{\tau_2=0\}}$, which is clearly minimised by choosing $\tau_2>0$ a.s., since $L_2<G_2$.
Hence $\cJ_2(\rho^{1,\infty}_1,\tau_2,x)=L_2(x)$ for all $x\le x'_1$ and any $\tau_2>0$, and in particular this is true for $\tau_2=+\infty$.

Now for $x>x'_1$, $P_2$ is faced with the optimisation problem
\begin{align}\label{u1}
u(x):=\inf_{\tau}\EE_x\left[e^{-r\tau}G_2(X_\tau)\mathds{1}_{\{\tau\le\rho^{1,\infty}_1\}}+e^{-r\rho^{1,\infty}_1}L_2(X_{\rho^{1,\infty}_1})\mathds{1}_{\{\tau>\rho^{1,\infty}_1\}}\right].
\end{align}
Noticing that $G_2/\phi_r$ has the same monotonicity properties as $\hat G_2$ (Appendix \ref{app:convex}), and that the game is terminated by $P_1$ if $X$ hits $x'_1$ it is clear that 
\begin{align*}
\EE_x\left[e^{-r\tau}G_2(X_\tau)\mathds{1}_{\{\tau\le\rho^{1,\infty}_1\}}\right]\ge 
&\inf_{z\ge x'_1}\left(\frac{G_2}{\phi_r}\right)(z)\EE_x\left[e^{-r\tau}\phi_r(X_\tau)\mathds{1}_{\{\tau\le \rho^{1,\infty}_1\}}\right]\\
=& \left(\frac{G_2}{\phi_r}\right)(x'_1)\EE_x\left[e^{-r\tau}\phi_r(X_\tau)\mathds{1}_{\{\tau\le \rho^{1,\infty}_1\}}\right].
\end{align*}
Since also $L_2<G_2$ and $(e^{-rt}\phi_r(X_t))_{t\ge 0}$ is a martingale, we obtain the following lower bound:
\begin{align}\label{lb}
&u(x)\nonumber\\
&\ge \inf_{\tau}\left(\left(\frac{G_2}{\phi_r}\right)(x'_1)\EE_x\left[e^{-r\tau}\phi_r(X_\tau)\mathds{1}_{\{\tau\le \rho^{1,\infty}_1\}}\right]+\left(\frac{L_2}{\phi_r}\right)(x'_1)\EE_x\left[e^{-r\rho^{1,\infty}_1}\phi_r(X_{\rho^{1,\infty}_1})\mathds{1}_{\{\tau>\rho^{1,\infty}_1\}}\right]\right)\nonumber\\
&\ge\inf_{\tau}\left(\left(\frac{L_2}{\phi_r}\right)(x'_1)\EE_x\left[e^{-r(\tau\wedge\rho^{1,\infty}_1)}\phi_r(X_{\tau\wedge\rho^{1,\infty}_1})\right]\right)=\left(\frac{L_2}{\phi_r}\right)(x'_1)\phi_r(x).
\end{align}
Taking $\tau=+\infty$, $\PP$-a.s.~in \eqref{u1} (and recalling \eqref{limit}),~it now follows from the Laplace transforms \eqref{eqn:lapl} that the lower bound above is attained. Hence it is optimal for $P_2$ to choose $\tau=+\infty$ $\PP_x$-a.s.~for all $x>x'_1$.

In conclusion we have shown that $\tau=+\infty$ is a best reply of $P_2$ to $P_1$'s stopping rule $\rho^{1,\infty}_1$. Since $P_1$'s best reply to $\tau=+\infty$ is by definition $\rho^{1,\infty}_1$, we have reached an equilibrium with $(\rho_1^{1,\infty},+\infty)$ as claimed. 
\end{proof}

\begin{remark}
It is not difficult to check that under $(i)$ and $(iii)$ of Assumption \ref{ass:flessi}, letting $G_1, G_2 \in\cA_2$, there exist $x'_i \in \cI$, $i=1,2$ such that $\rho^{2,\infty}_i:=\tau_2(x'_i)$, $i=1,2$ are optimal for \eqref{freeOS}. Moreover in the game \eqref{functional0}-\eqref{functional1} we have:
\begin{enumerate}
\item if $x'_1>x'_2$ then the couple $(+\infty,\rho^{2,\infty}_2)$ realises a Nash equilibrium;
\item if $x'_1<x'_2$ then the couple $(\rho^{2,\infty}_1,+\infty)$ realises a Nash equilibrium; 
\item if $x'_1=x'_2$, then both the couples $(+\infty,\rho^{2,\infty}_2)$ and $(\rho^{2,\infty}_1,+\infty)$ realise Nash equilibria.
\end{enumerate}  
\end{remark}


\bigskip

\textbf{Acknowledgments.} The authors thank two anonymous referees for their pertinent and useful comments, which helped to substantially improve the paper.


\appendix

\section{Appendix}
\label{app:os1d}
\renewcommand{\theequation}{A-\arabic{equation}}

\subsection{Convexity of $\hat{H}$}
\label{app:convex}

We show here that $\hat{H}$ of \eqref{def:hat} is strictly convex at $y>0$ if and only if $(\LL_X-r)H(x)>0$ at $x=F^{-1}_r(y)$. We simply work out explicitly calculations indicated by \cite[Sec.~6]{DayKar}. For $y=F_r(x)$ it is obvious that
$\hat{H}'(y)=g(x)$ with $g(x):=(H/\phi_r)'(x)/F'_r(x)$ so that $\hat{H}''(y)=g'(x)/F'_r(x)$. Since $F_r$ is strictly increasing, we only need to evaluate $g'(x)$. This can be easily done by observing that
\begin{align*}
F'_r(x)=\frac{(\psi_r'\phi_r-\psi_r\phi'_r)(x)}{(\phi_r)^2(x)}=W\frac{S'(x)}{(\phi_r)^2(x)}\quad\text{and}\quad g(x)=\frac{(H'\phi_r-H\phi'_r)(x)}{W\,S'(x)}
\end{align*}
from which we get
\begin{align*}
g'(x)=\frac{\phi_r(x)(S'H''-S''H')(x)}{W\,(S')^2(x)}-\frac{H(x)(S'\phi_r''-S''\phi_r')(x)}{W\,(S')^2(x)}.
\end{align*}
Now we use that $S''(x)=-2\mu(x)S'(x)/\sigma^2(x)$ to obtain
\begin{align*}
g'(x)=\frac{2}{W\,\sigma^2(x)(S')(x)}\Big[\phi_r(x)\LL_X H(x)-H(x)\LL_X \phi_r(x) \Big]=\frac{2\phi_r(x)}{W\,\sigma^2(x)(S')(x)}(\LL_XH-rH)(x),
\end{align*}
where in the last equality we have used that $\LL_X\phi_r=r\phi_r$. The last expression proves the claim and we remark that the result holds even if $r=r(x)$ is state dependent.

\subsection{Proofs of some lemmas}
\label{app:lem}
\begin{proof}[Proof of Lemma \ref{lemm:concavity}.]
Recalling the notation of Section \ref{sec:H}, let $H\in \cA_1$. From Lemma \ref{lem:conv} we immediately get $i)$. We notice that indeed $F_r(\underline{x}+)=0$ and $F_r(\overline{x}-)=+\infty$ due to \eqref{psiphiproperties1} and the limit at zero of $\hat{H}$ is verified from the definition of $\cA$. 

If we now show that
\begin{equation}
\label{ab}
\text{(a)}\,\, \lim_{y \uparrow \infty}\hat{H}(y) = +\infty, \qquad \text{and} \qquad \text{(b)}\,\, \lim_{y \downarrow 0}\hat{H}'(y) =-\infty,
\end{equation}
we can then conclude parts $ii)$ and $iii)$.

First we prove (a) above. By the definition of $\cA_1$, for fixed $\delta > 0$ there exists $\eps_\delta>0$ such that $h(z) \leq -\varepsilon_{\delta}$ for any $z \in [x_h + \delta, \overline x)$. Moreover, for any $x\in[{x}_h + \delta, \overline{x})$, \eqref{resolvent2} implies
\begin{eqnarray}
\label{limit1}
H(x) & \hspace{-0.25cm} = \hspace{-0.25cm} & -W^{-1}\bigg[\phi_r(x)\int_{\underline{x}}^{{x}_h + \delta} \psi_r(z)h(z) m'(z)dz + \phi_r(x)\int_{x_h + \delta}^{x} \psi_r(z)h(z) m'(z)dz  \nonumber \\
&& \hspace{1cm} + \psi_r(x)\int_x^{\overline{x}} \phi_r(z)h(z) m'(z)dz\bigg] \\
& \hspace{-0.25cm} \geq \hspace{-0.25cm} &-W^{-1}\bigg[ \phi_r(x) C_{\delta} - \varepsilon_{\delta} \phi_r(x) \int_{x_h + \delta}^{x} \psi_r(z) m'(z)dz - \varepsilon_{\delta} \psi_r(x)\int_x^{\overline{x}} \phi_r(z) m'(z)dz\bigg] \nonumber 
\end{eqnarray}
with $C_{\delta} := \int_{\underline{x}}^{x_h + \delta} \psi_r(z)h(z) m'(z)dz$.
Using \eqref{psiphiproperties3} we have 
$$\int_{x_h + \delta}^{x} \psi_r(z) m'(z)dz = \frac{1}{r}\left[\frac{\psi'_r(x)}{S'(x)} - \frac{\psi'_r(x_h + \delta)}{S'(x_h + \delta)}\right]$$
and by using \eqref{psiphiproperties2} also, we obtain
$$\int_x^{\overline{x}} \phi_r(z) m'(z)dz = - \frac{1}{r}\frac{\phi'_r(x)}{S'(x)}.$$
Substituting these into \eqref{limit1},
the right hand side of \eqref{limit1} is equal to
\begin{equation}
\label{limit3}
-W^{-1} \left[C_{\delta} + \frac{\varepsilon_{\delta}}{r}\frac{\psi'_r(x_h + \delta)}{S'(x_h + \delta)}\right]\phi_r(x) + \frac{\varepsilon_{\delta}}{r},
\end{equation}
and so we have
\begin{equation}
\label{limit4}
\frac{H(x)}{\phi_r(x)} \geq -W^{-1} \left[C_{\delta} + \frac{\varepsilon_{\delta}}{r}\frac{\psi'_r(x_h + \delta)}{S'(x_h + \delta)}\right] + \frac{\varepsilon_{\delta}}{r\phi_r(x)}.
\end{equation}
Using \eqref{psiphiproperties1} we obtain
$$\lim_{x \uparrow \overline{x}}\frac{H(x)}{\phi_r(x)} = + \infty,$$
and since $\lim_{y\uparrow\infty}F^{-1}_r(y)=\overline{x}$, we have established part (a).

To prove (b) let $\delta>0$, take $x < x_h-\delta$, and let $y=F_r(x)$. 
Note that since $H\in\cA_1$ there exists $\eps_\delta>0$ such that $h(z)\ge \eps_\delta$ for $z\in(\underline x,x_h-\delta]$ and we obtain
\begin{align}\label{eq:zero1}
\hat{H}'(y)=&-\frac{1}{W}\Big[\int^{\overline{x}}_{x_h-\delta}\phi_r(z)h(z)m'(z)dz+
\int^{x_h-\delta}_x\phi_r(z)h(z)m'(z)dz\Big]\\
\le&-\frac{1}{W}\Big[\int^{\overline{x}}_{x_h-\delta}\phi_r(z)h(z)m'(z)dz+\eps_\delta\int_x^{x_h-\delta}\phi_r(z)m'(z)dz\Big]\nonumber \\
=& -W^{-1}\left[\int^{\overline{x}}_{x_h-\delta}\phi_r(z)h(z)m'(z)dz+\frac{\eps_\delta}{r}\Big(\frac{\phi'_r(x_h-\delta)}{S'(x_h-\delta)}-\frac{\phi'_r(x)}{S'(x)}\Big)\right],\nonumber
\end{align}
where the first line follows from \eqref{res3} and the chain rule and the third line by \eqref{psiphiproperties3}. Then, letting $y\downarrow0$ (equivalently $x\downarrow\underline{x}$) and using \eqref{psiphiproperties2}, we conclude $\hat{H}'(0+)=-\infty$. The case $H \in \cA_2$ follows by symmetric arguments.
\end{proof}

\begin{proof}[Proof of Lemma \ref{lem:aux1}]
Problem \eqref{eq:auxOS} is the same as the one in Appendix \ref{app:firstOS} below with $x_o=\underline{x}$ therein. Once we prove existence and uniqueness of $y^\infty_2$ then optimality of $\tau^\infty_2$ follows from Proposition \ref{prop:app-OS}.

The equation $\hat{G}'_2(y)y-\hat{G}_2(y)=0$ with $y>\hat{y}_2$ expresses the tangency condition for a straight line passing through the origin and tangent to $\hat{G}_2$ at a point in $(\hat{y}_2,+\infty)$. If a solution to that equation exists then the convexity of $\hat{G}_2$ in $(\hat{y}_2,+\infty)$ implies that it must be unique. For the existence it is sufficient to observe that
\begin{align*}
\hat{G}'_2(\hat{y}_2)\hat{y}_2< \int_0^{\hat{y}_2}\hat{G'_2}(s)ds=\hat{G}_2(\hat{y}_2)
\end{align*}
since $\hat{G}_2$ is strictly concave in $(0,\hat{y}_2)$. Recalling \eqref{eq:lim} we get $\lim_{y\to\infty}[\hat{G}'_2(y)y-\hat{G}_2(y)]=+\infty$ and therefore there exists a unique $y_2^\infty\in(\hat{y}_2,+\infty)$. 
\end{proof}

\subsection{Some remarks on state dependent discounting}
\label{sec:rdisc}

Here we illustrate the case of a state dependent discount rate $(r(X_t))_{t \geq 0}$. In this setting the payoffs \eqref{functional0} become:
 \begin{align}
\label{functional2}
&\mathcal{J}_1(\tau_1,\tau_2;x):=\EE_x\Big[e^{-\int_0^{\tau_1}r(X_t)dt}G_1(X_{\tau_1})\mathds{1}_{\{\tau_1 < \tau_2\}} + e^{-\int_0^{\tau_2}r(X_t)dt}L_1(X_{\tau_2})\mathds{1}_{\{\tau_2 \leq \tau_1\}}\Big],\\ 
&\mathcal{J}_2(\tau_1,\tau_2;x):=\EE_x\Big[e^{-\int_0^{\tau_2}r(X_t)dt}G_2(X_{\tau_2})\mathds{1}_{\{\tau_2 \le \tau_1\}} + e^{-\int_0^{\tau_1}r(X_t)dt}L_2(X_{\tau_1})\mathds{1}_{\{\tau_1 \leq \tau_2\}}\Big].
\end{align}
In order to extend the methodology applied above, we make sufficient assumptions on $r$ to ensure the existence of strictly monotonic and strictly positive fundamental solutions $\phi_r$, $\psi_r$ to the ODE
\begin{align}
\tfrac{1}{2}\sigma^2(x)f''(x)+\mu(x)f'(x)-r(x)f(x)=0, \quad x\in\cI.
\end{align}
In particular we assume that $r(x)$ is bounded, continuous and strictly positive for $x\in\cI$. In this case we again have
\begin{align}
\EE_x\big[e^{-\int_0^{\tau(y)}r(X_t)dt}\big]=\left\{
\begin{array}{ll}
\frac{\psi_r(x)}{\psi_r(y)}, & x<y,\\[+5pt]
\frac{\phi_r(x)}{\phi_r(y)}, & x>y,
\end{array}
\right.
\end{align}
for $x,y\in\cI$ and $\tau(y):=\inf\{t\ge0:X_t=y\}$ (see \cite{Day}, Prop.~2.1).
The limits at the endpoints of the domain $\cI$ of functions $\phi_r$, $\psi_r$, $\phi'_r/S'$ and $\psi'_r/S'$ remain the same as in the previous sections, depending on whether $\underline{x}$ is natural, entrance-not-exit or exit-not-entrance. Instead of the expressions \eqref{psiphiproperties3} we must now consider their generalisation (see par.~9 and 10, Ch.~2 of \cite{BS})
\begin{equation}
\label{psiphiproperties3b}
\frac{\psi'_r(b)}{S'(b)} - \frac{\psi'_r(a)}{S'(a)}= \int_{a}^{b}r(y)\psi_r(y)m'(y) dy, \qquad \frac{\phi'_r(b)}{S'(b)}-\frac{\phi'_r(a)}{S'(a)} = \int_{a}^{b}r(y)\phi_r(y)m'(y) dy,
\end{equation}
for $\underline{x}<a<b<\overline{x}$.

It is then easy to see that all the arguments that we have used for the construction of Nash equilibria in the above sections can be repeated for state dependent discounting and all the results carry over to this setting with no additional difficulties. In particular one should notice that positivity and boundedness of $r(\cdot)$ allow us to find bounds similar to those that led to some of our key inequalities (e.g.~\eqref{limit3} and \eqref{eq:zero1}); for example, setting $\overline{r}:=\sup_{z\in\cI}r(z)$ the second term in the first equality of \eqref{eq:zero1} can be bounded from below as follows
\begin{align*}
\int_x^{x_h-\delta}\phi_r(z)m'(z)dz\ge \frac{1}{\overline{r}}\int_x^{x_h-\delta}r(z)\phi_r(z)m'(z)dz=\frac{1}{\overline{r}}
\Big(\frac{\phi'_r(x_h-\delta)}{S'(x_h-\delta)}-\frac{\phi'_r(x)}{S'(x)}\Big)
\end{align*}
and the rest of the proof follows in the same way also with state dependent discounting.

We also remark that the argument used to infer convexity and concavity of the transformed functions $\hat{H}$ in Lemma \ref{lemm:concavity} and \ref{lemm:concavity2} holds in the same form, i.e.~$\hat{H}(y)$ is strictly convex if and only if $\tfrac{1}{2}\sigma^2(x)H''(x)+\mu(x)H'(x)-r(x)H(x)>0$ with $y=F_r(x)$.

\subsection{Two Useful Optimal Stopping Problems}
\label{2usefulOS}

The proof of Theorem \ref{thm:construction} involves solving, for each player in turn, an optimal stopping problem whose stopping cost function depends on the strategy of the other player. Our approach to such problems is inspired by a characterisation via convex analysis due to Dynkin, later developed in \cite{DayKar}. Since it is beyond the scope of the present paper to develop a complete theory for such problems, however, we adopt the following hybrid approach. A geometric construction similar to that employed in the latter references is first used to propose a candidate stopping region and payoff. This candidate solution is then verified in a second step. In this way we obtain a convenient geometric characterisation of the stopping set and payoff for the particular optimal stopping problems encountered in Theorem \ref{thm:construction}.

\subsubsection{A First Optimal Stopping Problem}
\label{app:firstOS}

Recall Definition \ref{def:sets} along with the notation of \eqref{def-F} and \eqref{def:hat}, and consider a function $G \in \mathcal{A}_2$. Denote by $\hat{x} \in \cI$ the unique point at which $\LL_X G-rG$ changes its sign and take $x_o \in \cI$ with $x_o< \hat{x}$. Let us introduce the infinite time horizon optimal stopping problem with value function
\begin{equation}
\label{app:OS}
V_o(x) : = \inf_{\tau \in \mathcal{T}}\EE_x\Big[e^{-r\tau}G(X_{\tau})\mathds{1}_{\{\tau \le \tau_o\}} + L(X_{\tau_o}) e^{-r\tau_o}\mathds{1}_{\{\tau_o<\tau\}}\Big], 
\end{equation}
where $L(x) < G(x)$ for all $x\in\cI$ and $\tau_o := \inf\{t \geq 0 : X^x_t \leq x_o\}$. 

First we notice that if $x \le x_o$, picking any $\tau$~gives a payoff equal to $L(x)\mathds{1}_{\{\tau>0\}}+G(x)\mathds{1}_{\{\tau=0\}}$. The latter is minimised by choosing an arbitrary $\tau>0$ a.s., and
\begin{align}\label{Vo}
V_o(x)=L(x),\quad\text{for $x\le x_o$}.
\end{align}
Further, the next standard argument shows that it is never optimal to stop for $x<\hat{x}$ since $\LL_X G-rG<0$. In fact if $x<\hat{x}$, the sub-optimal stopping time $\hat{\tau}:=\inf\{t\ge 0: X^x\ge \hat{x}\}$ gives 
\begin{align*}
V_o(x)\le& \EE_x\Big[e^{-r\hat{\tau}}G(X_{\hat{\tau}})\mathds{1}_{\{\hat{\tau} \le \tau_o\}} + L(X_{\tau_o}) e^{-r\tau_o}\mathds{1}_{\{\tau_o<\hat{\tau}\}}\Big]\\
\le& \EE_x\Big[e^{-r(\hat{\tau}\wedge\tau_o)}G(X_{\hat{\tau}\wedge\tau_o})\Big]<G(x)
\end{align*}
where the last inequality is obtained by using Dynkin's formula and $\LL_X G-rG<0$ in $(\underline{x},\hat{x})$. 

It then follows that if an optimal stopping time $\tau_*$ exists then $\PP_x(\tau_*=\tau_o)=0$ for all $x\in\cI$, since $x_o<\hat x$. Hence in \eqref{app:OS} we could replace the event $\{\tau \le \tau_o\}$ by the event $\{\tau < \tau_o\}$ with no loss of generality, thus avoiding potential problems concerning continuity of the value function at $x_o$.

Set $y_o:= F_{r}(x_o)$ and define the function
\begin{equation}
\label{app:def-H}
Q(y):=\left\{
\begin{array}{ll}
\hat{L}(y), \quad 0 < y \leq y_o, \\[+8pt]
\hat{G}(y), \quad y > y_o.
\end{array}
\right.
\end{equation}
For $\hat{y}:=F_r(\hat{x})$ we argue as in Section \ref{app:convex} of this appendix and obtain that $\hat{G}$ is strictly concave in $(0, \hat{y})$ and strictly convex in $(\hat{y}, \infty)$. 

Let us consider the straight line $r_o(\cdot)$ which passes through the point $(y_o, Q(y_o))$ and is tangent to $Q$ at a point $y_*>\hat{y}:=F_r(\hat{x})$. Existence of $r_o$ can be easily proven due to convexity/concavity of $Q$ (Section \ref{app:convex}) and we leave it to the reader. This line is expressed as
\begin{equation}
\label{app:line}
r_o(y) = my + q, \quad y > 0,
\end{equation}
with
\begin{equation}
\label{app:mq}
\left\{
\begin{array}{ll}
m:=\frac{Q(y_*)-Q(y_o)}{y_* - y_o}, \\[+3pt]
q:= Q(y_o) - m y_o.
\end{array}
\right.
\end{equation}
By the convexity of $\hat{G}$ (and therefore of $Q$) in $(\hat{y},+\infty)$ the point $y_*$ is determined as the unique $y>\hat{y}$ that solves the tangency equation
\begin{align}\label{app:tan}
\frac{Q(y)-Q(y_o)}{y - y_o} = Q'(y).
\end{align}

\begin{prop}
\label{prop:app-OS} Let $G \in \cA_2$.
Assume there exists $y_* > \hat{y}$ solving \eqref{app:tan} (which is then unique). Recall \eqref{app:line} and \eqref{app:mq} and define $x_*:=F_r^{-1}(y_*)$ and the functions
\begin{equation}
\label{app:def-W}
W(y):=
\left\{
\begin{array}{ll}
\hat{L}(y), & 0 < y \leq y_o  \\[+3pt]
my + q, & y_o < y < y_* \\[+3pt]
\hat{G}(y), & y \ge y_*,
\end{array}
\right.
\end{equation}
and
\begin{equation}
\label{app:def-tildeV}
\widetilde{V}_o(x):= \phi_r(x) W(F_r(x)) =
\left\{
\begin{array}{ll}
L(x), & \underline{x} < x  \leq x_o  \\[+3pt]
m\psi_r(x) + q \phi_r(x), & x_o < x < x_* \\[+3pt]
G(x), & x_* \le x < \overline{x}.
\end{array}
\right.
\end{equation}
Then one has $\widetilde{V}_o \equiv V_o$ and $\tau_*:=\inf\{ t \geq 0: X_t \geq x_*\}$ is optimal for problem \eqref{app:OS}.
\end{prop}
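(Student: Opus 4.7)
The plan is a standard verification argument: I would first establish the analytical properties of the candidate $\widetilde V_o$ given by \eqref{app:def-tildeV}, then apply It\^o's formula to $e^{-rt}\widetilde V_o(X_t)$ to derive a one-sided inequality $\widetilde V_o \le V_o$, and finally attain equality by evaluating at $\tau=\tau_*$.

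Step 1 is regularity. On $(\underline{x},x_o]$ one has $\widetilde V_o=L\in C(\cI)$; on $(x_o,x_*)$ it is a linear combination of $\psi_r,\phi_r\in C^2(\cI)$; on $[x_*,\overline{x})$ it equals $G\in C^2$. Continuity at $x_o$ follows by construction of $q$ in \eqref{app:mq}. At $x_*$ the geometric tangency equation \eqref{app:tan} translates, via the chain rule and \eqref{res3}, into $\widetilde V_o'(x_*-)=\widetilde V_o'(x_*+)$, i.e.\ the smooth-fit property. Hence $\widetilde V_o\in C(\cI)$, $\widetilde V_o\in C^1(x_o,\overline{x})$ and $\widetilde V_o\in W^{2,\infty}_{loc}(x_o,\overline{x})$.

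Step 2 establishes the inequalities that feed the verification. First, $(\LL_X-r)\widetilde V_o=0$ on $(x_o,x_*)$ because $\psi_r,\phi_r$ span the kernel, and $(\LL_X-r)\widetilde V_o=(\LL_X-r)G\ge 0$ on $(x_*,\overline{x})$ since $x_*>\hat x$ and $G\in\cA_2$. Second, the obstacle inequality $\widetilde V_o\le G$ on $\cI$ translates in $y$-coordinates to $W\le\hat G$; on $[y_*,\infty)$ this is an equality, on $(0,y_o]$ it is $\hat L<\hat G$, and on $(y_o,y_*)$ the chord $my+q$ passes through the point $(y_o,\hat L(y_o))$ lying strictly below $\hat G(y_o)$, is tangent to $\hat G$ at $y_*>\hat y$ from below (by convexity of $\hat G$ on $(\hat y,\infty)$), and cannot re-cross $\hat G$ on $(y_o,\hat y)$ without violating concavity of $\hat G$ there; this geometric claim is the first technical step that needs a careful argument.

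Step 3 is the verification inequality. For arbitrary $\tau\in\cT$ and a localising sequence $\tau_n\uparrow\sigma_\cI$, apply It\^o-Tanaka (justified by the $W^{2,\infty}_{loc}$ regularity on $(x_o,\overline{x})$) to $e^{-r(t\wedge\tau\wedge\tau_o\wedge\tau_n)}\widetilde V_o(X_{t\wedge\tau\wedge\tau_o\wedge\tau_n})$, take expectations, discard the martingale part, and use $(\LL_X-r)\widetilde V_o\ge 0$ on $(x_o,\overline x)$ to get
\begin{equation*}
\widetilde V_o(x)\le\EE_x\bigl[e^{-r(\tau\wedge\tau_o\wedge\tau_n\wedge t)}\widetilde V_o(X_{\tau\wedge\tau_o\wedge\tau_n\wedge t})\bigr].
\end{equation*}
Passing to the limit $n\to\infty$, $t\to\infty$ is the second technical hurdle: it requires uniform integrability bounds coming from \eqref{ass:lim} and the asymptotic behaviour of $\psi_r,\phi_r$ at the boundaries (together with the convention \eqref{limit} on $\{\sigma_\cI=+\infty\}$). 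On $\{\tau\le\tau_o\}$ I would then replace $\widetilde V_o(X_\tau)$ by $G(X_\tau)$ using the obstacle inequality, and on $\{\tau_o<\tau\}$ use $\widetilde V_o(X_{\tau_o})=L(X_{\tau_o})$, which holds since $X_{\tau_o}=x_o$ $\PP_x$-a.s.\ by regularity of $X$ and $\widetilde V_o(x_o)=L(x_o)$. This yields $\widetilde V_o(x)\le V_o(x)$.

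Step 4 gives the matching upper bound. For $\tau=\tau_*$ the process $X$ lives in $(x_o,x_*)$ up to $\tau_*\wedge\tau_o$, where $(\LL_X-r)\widetilde V_o\equiv 0$, so the It\^o expansion becomes an equality of expectations (after the same localisation), and at the stopping time one has $\widetilde V_o(X_{\tau_*})=G(X_{\tau_*})$ on $\{\tau_*\le\tau_o\}$ and $\widetilde V_o(X_{\tau_o})=L(X_{\tau_o})$ on $\{\tau_o<\tau_*\}$, giving $\widetilde V_o(x)=\mathcal J(\tau_*)$ and hence $V_o\le\widetilde V_o$. Combined with Step 3 this proves $\widetilde V_o\equiv V_o$ and the optimality of $\tau_*$. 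I expect the main obstacles to be the geometric obstacle inequality in Step 2 (managing the concave-to-convex switch of $\hat G$ across the interval $(y_o,y_*)$) and the integrability/localisation bookkeeping in Step 3 that lets the It\^o identity pass to the limit uniformly in $\tau\in\cT$.
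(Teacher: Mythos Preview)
Your proposal is correct and follows essentially the same verification argument as the paper: establish $W^{2,\infty}_{loc}$ regularity and smooth fit at $x_*$, use $(\LL_X-r)\widetilde V_o\ge 0$ on $(x_o,\overline x)$ together with the obstacle inequality $\widetilde V_o\le G$ to get $\widetilde V_o\le V_o$ via It\^o--Tanaka, and attain equality at $\tau=\tau_*$. The paper's version is terser---it asserts ``by construction $\widetilde V_o\le G$'' and suppresses the localisation bookkeeping---so the two points you flag as technical hurdles are simply places where you are being more explicit than the paper.
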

\begin{proof}
If $x \leq x_o$ there is clearly nothing to prove thanks to \eqref{Vo}. Therefore, take $x>x_o$ and notice by \eqref{app:def-tildeV} that
$(\LL_X - r)\widetilde{V}_o(x) =0$ if $x \in (x_o, x_*)$. Moreover, by Section \ref{app:convex} we also have that $(\LL_X - r)\widetilde{V}_o(x) \geq 0$ if $x \in (x_*, \overline{x})$, since $y_* > \hat{y}$ and $\hat{G}$ is convex in $(\hat{y}, \infty)$. Also, by construction, $\widetilde{V}_o(x_*) = G(x_*)$, $\widetilde{V}^{'}_o(x_*) = G^{'}(x_*)$, $\widetilde{V}_o(x_o) = L(x_o)$ and $\widetilde{V}_o \leq G$, for any $x>x_o$. Since $\widetilde{V}_o \in W^{2,\infty}_{\text{loc}}((x_0, \overline{x}))$ we can apply It\^o-Tanaka's formula
to the process $(e^{-rt}\widetilde{V}_o(X^x_t))_{t\geq 0}$ on the time interval $[0,\tau \wedge \tau_o]$, for arbitrary $\tau \in \mathcal{T}$, and obtain
\begin{align}\label{ineq1}
\widetilde{V}_o(x)\le \EE_x \Big[e^{-r\tau\wedge\tau_o}\widetilde{V}_o(X_{\tau\wedge\tau_o})\Big]\le \EE_x\Big[e^{-r\tau}G(X_{\tau})\mathds{1}_{\{\tau \le \tau_o\}} + L(X_{\tau_o}) e^{-r\tau_o}\mathds{1}_{\{\tau > \tau_o\}}\Big]
\end{align}
and hence $\widetilde{V}_o\le V_o$. Then repeating the argument with $\tau=\tau_*$ we find 
\begin{align*}
\widetilde{V}_o(x)=\EE_x\Big[e^{-r\tau_*}G(X_{\tau_*})\mathds{1}_{\{\tau_* \le \tau_o\}} + L(X_{\tau_o}) e^{-r\tau_o}\mathds{1}_{\{\tau_* > \tau_o\}}\Big]
\end{align*}
and therefore $\widetilde{V}_o= V_o$ and $\tau_*$ is optimal.
\end{proof}
Notice that, when restricted to $[y_o, + \infty)$, the function $W$ is the largest convex function dominated by $Q$. The latter condition makes the result slightly different to the geometric characterisation in \cite{DayKar} (they have $\hat{G}=\hat L$ and then $W$ is non-positive, see also \cite{MoPa14}).


\subsubsection{A Second Optimal Stopping Problem}
\label{app:secondOS}

For the next optimal stopping problem we take the same setup as in Section \ref{app:firstOS}, with the modifications that $G \in \mathcal{A}_1$, $x_o > \hat{x}$ and $\tau_o := \inf\{t \geq 0 : X^x_t \geq x_o\}$. Again we recall that if an optimal stopping time $\tau_*$ exists then $\PP_x(\tau_*=\tau_o)=0$ for all $x\in\cI$ so that the indicator functions in \eqref{app:OS} may effectively be taken with strict inequalities only. As before we denote
\begin{equation}
\label{app:def-H-bis}
Q(y):=\left\{
\begin{array}{ll}
\hat{G}(y), & 0 < y < y_o , \\[+3pt]
\hat{L}(y), & y \geq y_o.
\end{array}
\right.
\end{equation} 
In contrast to the situation in Section \ref{app:firstOS}, in the present setting we will consider two possible geometries for this optimal stopping problem, in Propositions \ref{prop:app-OS-bis} and \ref{prop:app-OS-tris} respectively. This is necessary because it may in principle be optimal to stop anywhere in the interval $(\underline x,\hat x)$ and the geometry of $\hat G$ on $(0,\hat y)$ depends on the boundary behaviour of $X$ at $\underline x$, which we vary through the paper.

In what follows we write
\begin{align}
\bar m:= Q(y_o)/y_o
\end{align}
and recall that for a not non-singular lower boundary $\underline x$ we have $\psi_r(x)/\phi_r(x)\to0$ as $x\to\underline x$.

\begin{prop}\label{prop:app-OS-bis} 
Let $G\in\cA_1$. If 
\begin{align}
\label{case1}
Q(y)> \bar m y\qquad\text{for all $y\in(0,y_o)$}
\end{align}
then the function
\begin{equation}
\label{Vtildeo}
\widetilde{V}_o(x):=
\left\{
\begin{array}{ll}
\bar m \,\psi_r(x), & \underline{x} < x < x_o, \\[+3pt]
L(x), & x_o \leq x < \overline{x},
\end{array}
\right.
\end{equation}
is such that $\widetilde{V}_o \equiv V_o$ and, further, the stopping time $\tau_*:=+\infty$ is optimal for problem \eqref{app:OS}. 
\end{prop}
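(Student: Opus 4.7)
The plan is to verify the candidate $\widetilde V_o$ by two matching inequalities, with the policy $\tau=+\infty$ attaining the upper bound. The case $x\geq x_o$ is immediate from the opening remarks of Section \ref{app:firstOS} applied here: if $x\geq x_o$ then $\tau_o=0$, so any $\tau>0$ produces cost $L(x)$, which is optimal since $L<G$, giving $V_o(x)=L(x)=\widetilde V_o(x)$. Hence I would reduce to the case $x\in(\underline x,x_o)$ and establish $V_o\leq \widetilde V_o$ and $V_o\geq\widetilde V_o$ separately.

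For the upper bound I would simply plug $\tau=+\infty$ into the functional in \eqref{app:OS}. By convention \eqref{limit}, the contribution on $\{\tau_o=+\infty\}$ vanishes, and on $\{\tau_o<+\infty\}$ the continuity of $X$ at $\tau_o$ gives $X_{\tau_o}=x_o$, so the cost equals $L(x_o)\mathbb E_x[e^{-r\tau_o}]=L(x_o)\psi_r(x)/\psi_r(x_o)$ by \eqref{eqn:lapl}. Writing $\bar m=\hat L(y_o)/y_o=L(x_o)/\psi_r(x_o)$ (from $\hat L(y_o)=L(x_o)/\phi_r(x_o)$ and $y_o=\psi_r(x_o)/\phi_r(x_o)$), this equals $\bar m\psi_r(x)=\widetilde V_o(x)$ as required.

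For the lower bound I would apply It\^o--Tanaka to $t\mapsto e^{-rt}\widetilde V_o(X_t)$ stopped at $\tau\wedge\tau_o$ for arbitrary $\tau\in\mathcal T$. On $(\underline x,x_o)$ the function $\widetilde V_o$ equals $\bar m\psi_r$, hence is $C^2$ there and satisfies $(\mathcal L_X-r)\widetilde V_o=0$; moreover $\psi_r$ is bounded on $[\underline x,x_o]$ because it is strictly increasing with a finite (possibly zero) limit at $\underline x$ in each of the boundary regimes under consideration. Localising on $[0,\tau\wedge\tau_o\wedge T]$, discarding the vanishing drift term, and letting $T\to\infty$ via dominated convergence (using \eqref{limit}) produces
\begin{align*}
\widetilde V_o(x)=\mathbb E_x\!\left[e^{-r(\tau\wedge\tau_o)}\widetilde V_o(X_{\tau\wedge\tau_o})\right].
\end{align*}
The crucial geometric step is then to observe that hypothesis \eqref{case1}, multiplied through by $\phi_r(x)$, reads $\widetilde V_o(x)<G(x)$ for $x\in(\underline x,x_o)$; together with $\widetilde V_o(x_o)=L(x_o)<G(x_o)$ this gives
\begin{align*}
e^{-r(\tau\wedge\tau_o)}\widetilde V_o(X_{\tau\wedge\tau_o})\leq e^{-r\tau}G(X_\tau)\mathds 1_{\{\tau\leq\tau_o\}}+e^{-r\tau_o}L(X_{\tau_o})\mathds 1_{\{\tau_o<\tau\}},
\end{align*}
where the two events are handled separately (using $\widetilde V_o\leq G$ on $\{\tau\leq\tau_o\}$ and $\widetilde V_o(X_{\tau_o})=L(X_{\tau_o})$ on $\{\tau_o<\tau\}$). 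Taking expectations and minimising over $\tau\in\mathcal T$ yields $\widetilde V_o(x)\leq V_o(x)$, which closes the argument.

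The main technical point I anticipate is the careful justification of the localisation and its passage to the limit, particularly when $\underline x$ is exit-not-entrance (so $\sigma_\mathcal I<+\infty$ with positive probability) and $X$ may be killed at $\underline x$; here the convention \eqref{limit}, combined with the uniform boundedness of $\widetilde V_o$ on $[\underline x,x_o]$, ensures that no stray boundary contribution survives the limit. Beyond this routine but essential check, everything reduces to a direct reading of the geometric hypothesis \eqref{case1}: the line $y\mapsto\bar m y$ is the segment of the largest convex minorant of $Q$ on $[0,y_o]$ passing through the origin and touching $\hat L$ at $y_o$, so by the Dynkin/Dayanik--Karatzas geometric characterisation it supplies the value of the stopping problem.
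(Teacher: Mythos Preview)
Your proof is correct and follows essentially the same route as the paper's: both establish $\widetilde V_o\le V_o$ via It\^o's formula on $(\underline x,x_o)$, using $(\mathcal L_X-r)\widetilde V_o=0$ and the geometric inequality $\widetilde V_o\le G$ coming from \eqref{case1}. For the reverse inequality you plug $\tau=+\infty$ directly and read off the cost $L(x_o)\,\psi_r(x)/\psi_r(x_o)=\bar m\,\psi_r(x)$ from the Laplace transform \eqref{eqn:lapl}; the paper instead reuses the martingale identity with $\tau=\tau_\varepsilon:=\inf\{t\ge0:X_t\le\underline x+\varepsilon\}$ and lets $\varepsilon\downarrow 0$, exploiting $\psi_r/\phi_r\to 0$ at $\underline x$ to kill the boundary contribution. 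Your computation is more direct and perfectly legitimate given the convention \eqref{limit}; the paper's $\tau_\varepsilon$ device has the minor advantage of making the passage to the explosion time explicit (relevant when $\underline x$ is exit-not-entrance and $\sigma_{\cI}$ replaces $+\infty$), but for the natural and entrance-not-exit regimes where this proposition is actually invoked the two arguments are equivalent.
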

\begin{proof}
Clearly $\widetilde{V}_o\in C^2_b(\underline{x},x_o)$ and $\LL_x\widetilde{V}_o-r\widetilde{V}_o=0$ on $(\underline{x},x_o)$. By the same geometric arguments as in the proof of Proposition \ref{prop:app-OS}, and using \ref{case1}, we also have $\widetilde{V}_o(x)\le G(x)$. Hence by applying It\^o's formula for $x\in(\underline x,x_o)$ and any $\tau$ we get \eqref{ineq1} and therefore $\widetilde V_o(x)\le V_o(x)$. 

For fixed $\eps>0$, picking $\tau=\tau_\eps:=\inf\{t\ge0 \,:\, X_t\le \underline x+\eps\}$ we also find
\begin{align*}
\widetilde{V}_o(x)=&\EE_x\left[e^{-r(\tau_\eps\wedge\tau_o)}\widetilde{V}_o(X_{\tau_\eps\wedge\tau_o})\right]\\
=&\EE_x\left[e^{-r\tau_\eps}\widetilde{V}_o(X_{\tau_\eps})\mathds{1}_{\{\tau_\eps\le \tau_o\}}+e^{-r\tau_o}L(X_{\tau_o})\mathds{1}_{\{\tau_o< \tau_\eps\}}\right]\\
=&\bar m \frac{\psi_r(\underline x+\eps)}{\phi_r(\underline x+\eps)}\lambda_o(x)+\EE_x\left[e^{-r\tau_o}L(X_{\tau_o})\mathds{1}_{\{\tau_o< \tau_\eps\}}\right],
\end{align*}
with 
\begin{align*}
\lambda_o(x):=\frac{\psi_r(x)\phi_r(x_o)-\psi_r(x_o)\phi_r(x)}{(\psi_r/\phi_r)(\underline x +\eps)\phi_r(x_o)-\psi_r(x_o)}
\end{align*}
by \eqref{Vtildeo} and equation (4.3) in \cite{DayKar}.
Letting $\eps\to0$ we have $\tau_\eps\to\sigma_{\cI}$ and $\psi_r(\underline x+\eps)/\phi_r(\underline x+\eps)\to0$. Thus, taking limits and using dominated convergence and \eqref{limit}, we obtain 
\begin{align*}
\widetilde{V}_o(x)=\EE_x\left[e^{-r\tau_o}L(X_{\tau_o})\mathds{1}_{\{\tau_o< \infty\}}\right]\ge V_o(x),
\end{align*}
hence completing the proof.
\end{proof}

Because of the convexity of $\hat G$ on $(0,\hat y)$, if 
\begin{align}\label{case2}
\hat G(0+)>0\quad\text{and}\quad \inf_{y\in(0,y_o)} [\hat G(y)-\bar m y]\le0
\end{align}
then there exist two points $0<y_{*,1}\le y_{*,2}<\hat y$ such that the straight line $r_1(y):= \bar m_1 y$, with $\bar m_1:=Q(y_{*,1})/y_{*,1}$, is tangent to $Q(y_{*,1})$ while $y_{*,2}$ solves \eqref{app:tan}. In this case we have the following proposition, whose proof we omit due to its similarity to Proposition \ref{prop:app-OS}.

\begin{prop}
\label{prop:app-OS-tris} 
Let $G \in \cA_1$ be such that \eqref{case2} is satisfied and let $y_{*,1}$ and $y_{*,2}$ be the points described above. 
Then writing $x_{*,i}:=F_r^{-1}(y_{*,i})$ for $i=1,2$ and defining $m,q$ as in \eqref{app:mq}, the function
\begin{equation}
\label{app:def-tildeV-bis}
\widetilde{V}_o(x):=
\left\{
\begin{array}{ll}
\bar m_1 \psi(x), & \underline{x} < x < x_{*,1},\\[+3pt]
G(x), & x_{*,1} \le x \le x_{*,2},\\[+3pt]
m\psi_r(x) + q \phi_r(x), & x_{*,2} < x < x_o, \\[+3pt]
L(x), & x_o \leq x < \overline{x},
\end{array}
\right.
\end{equation}
is such that $\widetilde{V}_o \equiv V_o$. Further the stopping time $\tau_*:=\inf\{ s \geq 0: X_s \in [x_{*,1},x_{*,2}]\}$ is optimal for problem \eqref{app:OS}. 
\end{prop}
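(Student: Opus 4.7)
My plan is to mimic the verification argument of Proposition \ref{prop:app-OS} exactly, with the only new ingredients being that $\widetilde V_o$ has four pieces rather than three, and that one of these pieces coincides with the obstacle $G$ on an interval $[x_{*,1},x_{*,2}]$ strictly contained in $(\underline x,\hat x)$. First I would set $W(y):=\widetilde V_o(F_r^{-1}(y))/\phi_r(F_r^{-1}(y))$ and observe that, by construction, $W$ is the largest convex function on $(0,y_o]$ which is dominated by $Q$ and satisfies $W(y_o)=Q(y_o)$: on $(0,y_{*,1}]$ it is the line $r_1(y)=\bar m_1 y$ tangent to $\hat G$, on $[y_{*,1},y_{*,2}]$ it equals $\hat G$, and on $[y_{*,2},y_o]$ it is the line $my+q$ tangent to $\hat G$ at $y_{*,2}$ and passing through $(y_o,\hat L(y_o))$ by \eqref{app:mq}. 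This geometric picture drives everything.

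Next I would verify the regularity $\widetilde V_o\in C(\overline{\cI})\cap W^{2,\infty}_{loc}((\underline x,x_o))$. Continuity at $x_o$ follows from $m y_o+q=\hat L(y_o)$, and $C^1$ pasting at $x_{*,1}$ and $x_{*,2}$ translates, via the change of variables \eqref{def:hat} and Appendix \ref{app:convex}, into the tangency conditions $r_1(y_{*,1})=\hat G(y_{*,1})$, $r_1'(y_{*,1})=\hat G'(y_{*,1})$, and the analogous pair at $y_{*,2}$, both of which hold by construction. Direct differentiation then gives locally bounded second derivatives away from the pasting points, so $\widetilde V_o\in W^{2,\infty}_{loc}$ as required for applying It\^o--Tanaka.

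The remaining ingredients are the variational inequalities. On $(\underline x,x_{*,1})\cup(x_{*,2},x_o)$ the function $\widetilde V_o$ is a linear combination of $\psi_r$ and $\phi_r$, so $(\LL_X-r)\widetilde V_o=0$; on $(x_{*,1},x_{*,2})$ we have $\widetilde V_o=G$ and, since $[x_{*,1},x_{*,2}]\subset(\underline x,\hat x)$, the defining property of $\cA_1$ gives $(\LL_X-r)G>0$; on $(x_o,\overline x)$ no sign condition is needed because the game terminates at $\tau_o$. For the obstacle bound $\widetilde V_o\le G$ on $(\underline x,x_o)$, I would argue by convexity of $\hat G$ on $(0,\hat y)$: the tangent lines $r_1$ and $my+q$ lie below $\hat G$ there, which by the transformation \eqref{def:hat} becomes $\widetilde V_o\le G$ on $(\underline x,x_{*,1})\cup(x_{*,2},x_o)$, while equality holds on $[x_{*,1},x_{*,2}]$.

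Applying It\^o--Tanaka to $(e^{-rt}\widetilde V_o(X_t))_{t\ge 0}$ on $[0,\tau\wedge\tau_o]$ with an arbitrary $\tau\in\cT$, the above inequalities and the identity $\widetilde V_o(x_o)=L(x_o)$ yield, after a standard localisation,
\begin{equation*}
\widetilde V_o(x)\le \EE_x\Big[e^{-r\tau}G(X_\tau)\mathds 1_{\{\tau\le\tau_o\}}+e^{-r\tau_o}L(X_{\tau_o})\mathds 1_{\{\tau_o<\tau\}}\Big]
\end{equation*}
so that $\widetilde V_o\le V_o$. Taking $\tau=\tau_*$ turns every inequality above into an equality, since $(\LL_X-r)\widetilde V_o=0$ on $(\underline x,x_{*,1})\cup(x_{*,2},x_o)$, $\widetilde V_o(X_{\tau_*})=G(X_{\tau_*})$ on $\{\tau_*<\tau_o\}$, and $\PP_x(\tau_*=\tau_o)=0$ for $x\in(\underline x,x_o)$ since $x_{*,2}<\hat x<x_o$ and $\tau_*<\tau_o$ $\PP_x$-a.s.\ starting from $x\in[x_{*,1},x_{*,2}]$. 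The main obstacle is the localisation in It\^o--Tanaka when $\underline x$ is entrance-not-exit or exit-not-entrance: one needs to stop the process at an exit time from $[\underline x+\varepsilon,x_o]$, check that the boundary contribution at $\underline x+\varepsilon$ vanishes as $\varepsilon\downarrow 0$ using $\psi_r(\underline x+)/\phi_r(\underline x+)\to 0$ together with the convention \eqref{limit}, and then send $\varepsilon\downarrow 0$; this is exactly the device used in the last display of Proposition \ref{prop:app-OS-bis} and transfers verbatim here.
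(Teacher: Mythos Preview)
Your proposal is correct and follows exactly the route the paper intends: the paper omits this proof entirely, stating only that it is similar to Proposition~\ref{prop:app-OS}, and your verification via It\^o--Tanaka together with the geometric obstacle bound is precisely that argument. One small point: your justification of $\widetilde V_o\le G$ invokes convexity of $\hat G$ only on $(0,\hat y)$, but the interval $(x_{*,2},x_o)$ extends past $\hat x$; there the bound follows because $\hat G-(my+q)$ is concave on $(\hat y,y_o)$ with non-negative endpoint values, hence non-negative throughout.
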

It is immediate to check that, when restricted to the domain $(0,y_o]$, the function $W(y):=(V_o/\phi_r)\circ F^{-1}_r(y)$ is the largest convex function dominated by $Q$. If $\hat G(0+)\le 0$ then $y_{*,1}$ clearly does not exist in $(0,y_o)$ and instead we state the following corollary, whose proof is left to the reader.
\begin{coroll}
Let $G \in \cA_1$ with $\hat G(0+)\le 0$ and assume $y_{*,2}$ exists as described above. Then the results of Proposition \ref{prop:app-OS-tris} hold with $x_{*,1}=\underline x$.
\end{coroll}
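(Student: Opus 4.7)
The plan is to follow the proof of Proposition \ref{prop:app-OS-tris} with the modification that the left tangent segment disappears: since $\hat{G}$ is convex on $(0,\hat{y})$ with $\hat{G}(0+)\le 0$, no line through the origin can be tangent to $\hat{G}$ while lying below it, so the largest convex function $W$ dominated by $Q$ on $(0,y_o]$ coincides with $\hat{G}$ itself on $(0,y_{*,2}]$ and with the tangent chord from $(y_{*,2},\hat{G}(y_{*,2}))$ to $(y_o,\hat{L}(y_o))$ on $[y_{*,2},y_o]$. Translating back via $\widetilde V_o(x):=\phi_r(x)W(F_r(x))$ one is led to the candidate
\begin{equation*}
\widetilde V_o(x):=\begin{cases} G(x), & \underline{x}<x\le x_{*,2},\\ m\psi_r(x)+q\phi_r(x), & x_{*,2}<x<x_o,\\ L(x), & x_o\le x<\overline{x},\end{cases}
\end{equation*}
with $m,q$ as in \eqref{app:mq} and associated candidate stopping time $\tau_*:=\inf\{s\ge 0:X_s\le x_{*,2}\}$.

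I would then verify three properties of this candidate: (i) the $C^1$ fit at $x_{*,2}$, which is equivalent to the tangency equation \eqref{app:tan} at $y_{*,2}$, together with local boundedness of $\widetilde V_o''$ yielding $\widetilde V_o\in W^{2,\infty}_{\mathrm{loc}}((\underline{x},x_o))$; (ii) $(\LL_X-r)\widetilde V_o\ge 0$ on $(\underline{x},x_o)$, which on $(x_{*,2},x_o)$ is automatic since $\psi_r,\phi_r$ solve $\LL_X u=ru$, while on $(\underline{x},x_{*,2})$ we have $\widetilde V_o=G$ and $(\LL_X-r)G>0$ because $G\in\cA_1$ and $x_{*,2}<\hat{x}$ (the point $y_{*,2}<\hat{y}$ is the tangency to $\hat{G}$ from above on its convex branch); (iii) $\widetilde V_o\le G$ on $\cI$, which is trivial on $(\underline{x},x_{*,2})$, follows from convexity of $\hat{G}$ on $(0,\hat{y})$ and the smooth fit at $y_{*,2}$ on $(x_{*,2},x_o)$, and reduces to $L<G$ on $[x_o,\overline{x})$.

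The verification itself proceeds exactly as in Proposition \ref{prop:app-OS-bis}: applying It\^o--Tanaka's formula to $(e^{-rt}\widetilde V_o(X_t))_{t\ge 0}$ on $[0,\tau\wedge\tau_o]$ for arbitrary $\tau\in\cT$ and taking expectations gives
\begin{equation*}
\widetilde V_o(x)\le \EE_x\!\left[e^{-r(\tau\wedge\tau_o)}\widetilde V_o(X_{\tau\wedge\tau_o})\right]\le \EE_x\!\left[e^{-r\tau}G(X_\tau)\mathds{1}_{\{\tau\le\tau_o\}}+e^{-r\tau_o}L(X_{\tau_o})\mathds{1}_{\{\tau_o<\tau\}}\right],
\end{equation*}
so $\widetilde V_o\le V_o$. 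Choosing $\tau=\tau_*$ turns both inequalities into equalities since $(\LL_X-r)\widetilde V_o\equiv 0$ on the open continuation region $(x_{*,2},x_o)$ and $\widetilde V_o$ matches $G$ on $\{\tau_*\le\tau_o\}$ and $L$ on $\{\tau_o<\tau_*\}$.

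The only mildly delicate step I anticipate is confirming that the integrand in the It\^o--Tanaka expansion is truly non-negative across the interface at $x_{*,2}$; this is where the precise location $x_{*,2}<\hat{x}$ matters, ruling out a sign change in $(\LL_X-r)G$ inside the stopping region. Everything else is a transcription of the argument for Proposition \ref{prop:app-OS-tris} after collapsing the left-hand tangency point to the boundary, which is precisely the content of the stated corollary.
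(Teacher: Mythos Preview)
Your proposal is correct and follows precisely the verification template of Propositions \ref{prop:app-OS} and \ref{prop:app-OS-tris}, which is exactly what the paper intends: the corollary's proof is explicitly ``left to the reader'' there, so there is no alternative argument to compare against. The only point worth tightening is your obstacle condition (iii) on $(x_{*,2},x_o)$: convexity of $\hat G$ on $(0,\hat y)$ gives $\ell\le\hat G$ only up to $\hat y$, and on $(\hat y,y_o)$ you should note that $f:=\hat G-\ell$ is concave with $f(\hat y)>0$ and $f(y_o)=\hat G(y_o)-\hat L(y_o)>0$, so $f$ cannot dip below zero there; this is the same geometric fact used in Step~2 of the proof of Proposition~\ref{prop:values}.
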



\end{document}